\documentclass{article}
\usepackage{graphicx} % Required for inserting images
\usepackage{longtable}
\usepackage{booktabs} 
\usepackage{amsmath, amssymb, amsfonts, amsthm, mathrsfs}

\usepackage{graphicx}
\usepackage{subcaption}
\usepackage{booktabs}
\usepackage{float}
\usepackage{afterpage}
\usepackage[hidelinks]{hyperref}
\usepackage{bookmark}
\usepackage{xcolor}
\usepackage[capitalize]{cleveref}
\usepackage{cases}
\usepackage{enumitem}
\usepackage{geometry} 
\usepackage{natbib}

\newtheorem{theorem}{Theorem}[section]
\newtheorem{lemma}[theorem]{Lemma}
\newtheorem{corollary}[theorem]{Corollary}
\newtheorem{definition}[theorem]{Definition}

\theoremstyle{definition}  % 设置为正常字体（非斜体）
\newtheorem{example}{Example}[section]
\newtheorem{remark}{Remark}
\theoremstyle{plain}  % 粗体标题+斜体内容（常用于命题、定理、假设等）
\newtheorem*{assumption}{Assumption}

\title{Onsager--Machlup Functionals for Generalized Newtonian Equations of Motion with Time-Varying Fractional Noise}

\author{
Yanbin Zhu%
\thanks{This work is supported by the National Key R\&D Program of China (No.~2023YFA1009200), the National Key Project of the National Natural Science Foundation of China (Grant No.~12531009), and the National Natural Science Foundation of China (Grant Nos.~12471183, 12071175).}
\and
Xiaomeng Jiang%
\thanks{Corresponding author.}
\and
Yong Li
}

\date{}

\begin{document}
\maketitle

\begin{center}
College of Mathematics, Jilin University, Changchun 130012, PR China
\end{center}

\begin{center}
\texttt{zhuyb23@mails.jlu.edu.cn},
\texttt{jxmlucy@hotmail.com},
\texttt{liyong@jlu.edu.cn}
\end{center}

\begin{abstract}
In this paper, we derive the Onsager--Machlup functional for a class of
degenerate stochastic differential equations on $\mathbb{R}^{m+n}$ driven by
$n$-dimensional fractional Brownian motion with time-dependent diffusion
coefficients, where the Hurst parameter satisfies $H\in(1/4,1)$.
The main difficulty arises from the interaction between the degenerate
structure and the multidimensional fractional noise, which produces nontrivial
coupling terms under small-ball conditioning. By combining the Gaussian
correlation inequality with an approximation argument for infinite-dimensional
convex sets, we establish a decoupling mechanism that allows these coupling
effects to be controlled by unconditional Gaussian expectations.
Furthermore, through regularity estimates for the degenerate components and
sharp H\"older norm estimates, we eliminate the remaining coupling
contributions and obtain an explicit expression for the Onsager--Machlup
functional. As applications of the derived functional, we obtain the
corresponding constrained Euler--Lagrange equations characterizing the most
probable transition paths of the system, and establish a sufficient condition
under which the stochastic differential equation preserves the most probable
path.
\end{abstract}

\medskip
\noindent\textbf{Keywords.}
Fractional Brownian motion; Onsager--Machlup functional;  Constrained Euler--Lagrange equations; Most probable paths; Small-ball probability.

\medskip
\noindent\textbf{MSC (2020).}
60H10; 60G22; 49K15; 60F10.

\allowdisplaybreaks

\section{Introduction}
In classical deterministic dynamics, trajectories trapped in a potential well typically converge to a stable equilibrium. In realistic environments, however, random fluctuations are unavoidable. The presence of noise gives rise to metastability: trajectories may remain near the deterministic equilibrium for long periods of time before making rare transitions to other regions of the state space.

Although the system may remain near a given potential well for a long period, the cumulative effect of noise eventually induces it to cross energy barriers, resulting in state-switching events \cite{kramers1940brownian}. While such transitions appear irregular, they exhibit a clear probabilistic structure in path space: within a vanishingly small tube around a reference trajectory, the relative probability density of the stochastic paths concentrates along a distinguished configuration that minimizes the transition governing functional. 

The Onsager--Machlup (OM) functional is a fundamental mathematical tool for describing this phenomenon. The OM functional was first introduced by Onsager and Machlup \cite{Onsager1953} and rigorously established for SDEs under the supremum norm in \cite{ikeda1981stochastic}. Analogous to the Lagrangian in classical mechanics, the OM functional identifies the most probable transition path by acting as an action functional on path space. This framework is of central importance in chemical reaction kinetics, protein folding, and neuronal dynamics \cite{BATTEZZATI2013163, 10.1063/1.467139, faccioli2006dominant}.

The paper extends the classical Onsager--Machlup theory in two important directions:
\begin{enumerate}
\renewcommand{\labelenumi}{(\roman{enumi})}

\item \textbf{Time-Varying Fractional Noise.}
In complex media such as viscoelastic fluids and protein structures,
noise often exhibits memory effects and temporal correlations
\cite{kou2008stochastic}.
By introducing fractional Brownian motion with time-dependent diffusion
coefficients, one can model systems in which both the noise intensity and
the temporal correlations evolve over time.
These memory effects imply that the most probable transition path is no
longer determined solely by the instantaneous state of the system, but is
instead coupled to the entire history of the trajectory.

\item \textbf{Degenerate Structures in Physical Systems.}
A representative class of examples arises from stochastic Newtonian
equations of motion, or more generally from stochastic Hamiltonian systems
\cite{talay2002stochastic}.
In these systems, random perturbations typically act as fluctuating forces
that directly modify the velocity (or momentum) variables rather than the
position variables.
By denoting $X_t$ and $Y_t$ as the position and velocity of the system,
respectively, the dynamics can be written as
\[
\begin{cases}
X_t' = Y_t,\\[1mm]
Y_t' = F_t(X_t,Y_t)+\sigma_t\xi_t^{H},
\end{cases}
\]
where $F_t$ denotes the deterministic force field,
$f'(t)$ denotes the derivative of $f(t)$,
and $\xi_t^{H}$ represents fractional  white noise.
In such degenerate settings, stochastic forcing is injected into only a
subset of the degrees of freedom, namely the velocity component, and is
then propagated throughout the entire system via the intrinsic kinematic
coupling.
This classical physical structure provides a natural prototype for the
more general class of coupled nonlinear degenerate dynamics investigated
in the present work.

\end{enumerate}

In this paper, we investigate a degenerate system driven by time-dependent fractional noise:
\begin{equation}
\begin{cases}
dX_t = a_t(X_t,Y_t) \, dt, \\[4pt]
dY_t = b_t(X_t, Y_t) \, dt + \sigma_t \, dB^H_t,
\end{cases} \label{fir}
\end{equation}
with initial condition $(X_0,Y_0)=(x_0,y_0)$. Here, $a_t(x,y):[0,1]\times \mathbb{R}^m\times\mathbb{R}^n\to \mathbb{R}^m$ and $b_t(x,y):[0,1]\times \mathbb{R}^m\times\mathbb{R}^n \to \mathbb{R}^n$ are the drift coefficients, $\sigma_t = \text{diag}(\sigma^1_t, \dots, \sigma^n_t)$ is the diffusion matrix, and $B^H$ is an $n$-dimensional fractional Brownian motion with Hurst parameter $H \in (1/4, 1)$. For a given reference path $\phi -y_0\in \mathcal{H}^\sigma_H([0,1]; \mathbb{R}^n)$ representing the trajectory of the component $Y$, we define its response state path $\psi \in C^{1+H}([0,1]; \mathbb{R}^m)$ as the unique solution to the controlled deterministic system:
\begin{equation}
\begin{cases}
d\psi_t = a_t(\psi_t, \phi_t) \, dt, \\[4pt]
\psi_0 = x_0.
\end{cases} \label{sec}
\end{equation}
The OM functional characterizes the asymptotic probability of the stochastic process $Y$ remaining within a small $\beta$-H\"{o}lder neighborhood of the reference path $\phi$. Specifically, if the following limit exists for a suitable H\"{o}lder exponent $\max\{H-1/2,0\} < \beta < H-1/4$:
\begin{equation}
\exp\bigl(-J(\phi)\bigr) = \lim_{\varepsilon \to 0} 
\frac{\mathbb{P}\bigl(\| Y_{\cdot} - \phi_{\cdot} \|_{\beta} \leq \varepsilon\bigr)}
{\mathbb{P}\bigl(\Vert \int_0^\cdot \sigma_s \, dB^{H}_s  \Vert_\beta \leq \varepsilon \bigr)}, \label{omf}
\end{equation}
then $J(\phi)$ is called the OM functional associated with the system \eqref{fir} around the path $\phi$.

 The derivation of OM functionals for stochastic differential equations driven by fractional Brownian motion was initiated in \cite{Nualart} through a fractional Girsanov transformation. Since then, the theory has undergone substantial development in several directions. In particular, \cite{zhang2025persistenceinvarianttoristochastic} combined OM theory with KAM techniques to investigate stochastic nonlinear Schrödinger equations, while \cite{duanjq} introduced a probability-flow framework for jump-diffusion processes. Furthermore, \cite{doi:10.1137/20M1310539} established the $\Gamma$-convergence of the OM functional to the geometric Freidlin--Wentzell action functional. 
Extensions to stochastic systems with time-dependent coefficients were established in \cite{Zhang2026}, while degenerate stochastic systems driven by standard Brownian motion were studied in \cite{liu2024onsager}.

To the best of our knowledge, the OM functional for degenerate stochastic differential equations driven by multidimensional time-varying fractional Brownian motion has not been established in the existing literature.  The combination of multidimensionality, time-dependent coefficients, and degeneracy in the fractional setting introduces several substantial analytical difficulties. A first challenge arises from implementing a degenerate Girsanov transformation on the path space. Moreover, when performing a Taylor expansion of the non-degenerate drift coefficient $b_t(X_t,Y_t)$ around a reference path, two distinct coupling mechanisms emerge and complicate the asymptotic analysis:

\begin{itemize}
    \item \textbf{Degeneracy-induced coupling.}
    The dependence of $b_t$ on the degenerate component $X_t$ propagates through the Taylor expansion. Since the dynamics of $X_t$ are determined implicitly through the coupling relation $a_t(X_t,Y_t)$, this induces a non-local dependence between the degenerate and non-degenerate subsystems.

    \item \textbf{Multidimensional coupling.}
    The multidimensional structure of the fractional Brownian motion generates non-diagonal cross-interaction terms in the exponential functional. Under the Hölder small-ball conditioning $|\cdot|_\beta\le \varepsilon$, these terms remain strongly coupled and prevent a direct asymptotic evaluation.
\end{itemize}

	The main obstruction is that the coupling terms generated by the degenerate structure and the multidimensional fractional noise fall outside the class of exponential functionals that can be treated by existing fractional Onsager--Machlup techniques.

Indeed, in the classical fractional setting, the asymptotic analysis of the exponential terms typically relies on three mechanisms. The first concerns stochastic integrals against fractional Brownian motion with deterministic square-integrable integrands.  The second involves double stochastic integrals that can be reduced, via the stochastic Fubini theorem, to stochastic integrals with deterministic kernels. The remaining higher-order terms are sufficiently small and can be controlled through exponential martingale estimates, yielding contributions of order $o(1)$ under the small-ball conditioning.

However, the coupling terms arising in the present degenerate multidimensional setting do not belong to any of these categories. The degeneracy-induced interaction generates random integrands depending implicitly on the coupled state variables, while the multidimensional structure produces non-diagonal cross-interaction terms that cannot be reduced to deterministic-kernel representations. Consequently, the existing fractional OM framework no longer applies, and new techniques are required to control these conditional exponential functionals.

As a consequence, the conditional expectations arising in the OM analysis cannot be handled by existing approaches developed for fractional-noise-driven systems.

To overcome these difficulties, our analysis relies on three main ingredients:

\begin{itemize}
    \item \textbf{Degeneracy reduction through operator selection.}
    We perform a degenerate Girsanov transformation by shifting only the component directly driven by the noise, namely the non-degenerate component $Y$. The trajectory of the degenerate component $X$ is then reconstructed through the deterministic coupling relation \eqref{sec} via a Volterra-type integral operator $K_H^\sigma$.

    \item \textbf{Conditional expectation estimates via the Gaussian Correlation Inequality.}
    To control the multidimensional cross-interaction terms, we extend the Gaussian correlation inequality (GCI) to the abstract Wiener space setting. We prove that the conditional expectation under the Hölder small-ball constraint is bounded by its unconditional counterpart (Theorem~\ref{conditional-expectation}), thereby reducing the asymptotic analysis to an uncoupled form.

    \item \textbf{Fractional regularity and Gronwall estimates.}
   Combining a refined regularity analysis of fractional operators with suitable Gronwall-type inequalities, we establish sharp $(1+\beta)$-Hölder norm estimates for the degenerate components. By combining these estimates with suitable stochastic integration-by-parts formulas, we obtain effective control of the remaining degeneracy-induced coupling terms and show that they vanish asymptotically.
\end{itemize}

Our main result (Theorem~\ref{result}) provides explicit expressions for the Onsager--Machlup (OM) functional in the three regimes $H\in(1/4,1)$:
\begin{equation}\label{mainresultfunctional}
J(\phi)=
\begin{cases}
\dfrac{1}{2}\displaystyle\int_0^1
\Big|
\dot{\phi}_s
-
s^{\alpha}D_{0^+}^{\alpha}s^{-\alpha}
\sigma_s^{-1}b_s(\psi_s,\phi_s)
\Big|^2
+d_H\nabla_y\!\cdot b_s(\psi_s,\phi_s)\,ds,
& \tfrac{1}{2}<H<1, \\[1.5em]

\dfrac{1}{2}\displaystyle\int_0^1
\left|
\sigma_s^{-1}
\bigl(\phi'_s-b_s(\psi_s,\phi_s)\bigr)
\right|^2
+\nabla_y\!\cdot b_s(\psi_s,\phi_s)\,ds,
& H=\tfrac{1}{2}, \\[1.5em]

\dfrac{1}{2}\displaystyle\int_0^1
\Big|
\dot{\phi}_s
-
s^{-\alpha}I_{0^+}^{\alpha}s^{\alpha}
\sigma_s^{-1}b_s(\psi_s,\phi_s)
\Big|^2
+d_H\nabla_y\!\cdot b_s(\psi_s,\phi_s)\,ds,
& \tfrac{1}{4}<H<\tfrac{1}{2},
\end{cases}
\end{equation}
where $\alpha=|H-\tfrac12|$, $d_H$ is a constant depending only on $H$, and the degenerate component $\psi$ is implicitly coupled with $\phi$ through the system \eqref{sec}. For $H\neq \tfrac12$, the functions $\phi$ and $\dot{\phi}$ are related through
\[
\phi_t-y_0=K_H^\sigma(\dot{\phi})(t),
\]
where $K_H^\sigma$ is a fractional integral operator of order $H+\tfrac12$, whose precise definition will be given in Section~2. For the Brownian case $H=\tfrac12$, $\phi'_t$ denotes the ordinary time derivative of $\phi_t$.

As an application of the derived Onsager--Machlup functional, we study both
most probable transition paths and most probable paths associated with the SDE
\eqref{fir}. A most probable transition path is defined as a minimizer of
$J(\cdot)$ among all admissible paths satisfying the boundary conditions
$\phi_0=y_0$ and $\phi_1=y_1$. By contrast, a most probable path is considered
under the prescribed initial condition only, without imposing a terminal
constraint at $t=1$.

Furthermore, by incorporating the non-local differential constraint through a
time-dependent Lagrange multiplier, we derive the corresponding constrained
Euler--Lagrange system. This system provides a deterministic characterization
of the most probable transition paths associated with the underlying degenerate
fractional stochastic dynamics. In addition, we establish a sufficient condition
under which the SDE preserves the most probable path, thereby extending the
result of \cite{xinze2026most}.

The remainder of this paper is organized as follows. In Section 2, we introduce the mathematical framework and collect several preliminary results that will be used throughout the paper, including aspects of fractional calculus, infinite-dimensional Gaussian measures, and a proof of the GCI.
Section 3 is devoted to the proof of the main result, Theorem~\ref{result}. The proof combines a degenerate Girsanov transformation with delicate conditional expectation estimates under small-ball constraints.
In Section 4, we derive the Euler–Lagrange equations satisfied by the most probable transition path between two points for an SDE via the OM functional. Furthermore, we establish the preservation theorem of the most probable path under free terminal conditions, followed by several numerical experiments to illustrate the aforementioned theoretical results.

\section{Preliminaries}
In this section, we recall the foundational definitions, notations, and preliminary lemmas concerning H\"{o}lder spaces and fractional calculus that will be utilized throughout this paper.

Throughout this paper, $|\cdot|$ denotes the Euclidean norm of a vector. For a measurable function $f$ defined on the underlying probability space $(\Omega,\mathcal{F},\mathbb{P})$, we write
\[
\mathbb{E}[f]
=
\int_\Omega f(\omega)\,d\mathbb{P}(\omega)
\]
for its expectation. The notation $\mathbb{E}(\,\cdot\,|A)$ denotes the conditional expectation given an event $A$, while
\[
\mathbb{E}(\,\cdot\,|A,B)
:=
\mathbb{E}(\,\cdot\,|A\cap B)
\]
denotes the conditional expectation with respect to the intersection of the events $A$ and $B$.
 
  For two vectors $\alpha,\beta\in\mathbb{R}^n$, their Euclidean inner product is denoted by
\[
\alpha\cdot\beta=\sum_{i=1}^n \alpha_i\beta_i.
\]
Moreover, for an $n$-dimensional stochastic integral, we use the notation
\[
\int_0^T \phi_s\cdot dW_s
:=\sum_{i=1}^n\int_0^T \phi_s^i\,dW_s^i.
\]

For a vector-valued function $b(x)$, we use $\nabla_x b(x)$ to denote its Jacobian matrix with respect to $x$, and $\nabla_x\cdot b(x)$ to denote its divergence.

\subsection{Function Spaces and Norms}
We first introduce the standard H\"{o}lder spaces on the time interval $[0,1]$. Let $\beta \in (0, 1]$. We denote by $C^\beta([0,1]; \mathbb{R})$ the Banach space of $\beta$-H\"{o}lder continuous scalar functions equipped with the classical semi-norm
\begin{equation}\label{eq:Holder-seminorm}
    [f]_\beta := \sup_{0 \le s < t \le 1} \frac{|f(t) - f(s)|}{|t - s|^\beta} < \infty.
\end{equation}
To facilitate the component-wise analysis of our multi-dimensional system, for any vector-valued function $f = (f^1, \dots, f^d)^T \in C^\beta([0,1]; \mathbb{R}^d)$, we define its H\"{o}lder semi-norm under the component-wise supremum topology as
\begin{equation*}
    [f]_\beta := \max_{1 \le i \le d} [f^i]_\beta.
\end{equation*}
The space $C^\beta([0,1]; \mathbb{R}^d)$ is standardly endowed with the full norm
\begin{equation*}
    \|f\|_{C^\beta([0,1]; \mathbb{R}^d)} := \max_{1 \le i \le d} \sup_{t \in [0,1]} |f^i(t)| + [f]_\beta.
\end{equation*}
For higher regularity where $1 < 1+\beta < 2$, the space $C^{1+\beta}([0,1]; \mathbb{R}^d)$ consists of continuously differentiable functions whose derivatives are $\beta$-H\"{o}lder continuous. Its standard norm combines the supremum of both the function and its derivative:
\begin{equation*}
    \|f\|_{C^{1+\beta}([0,1]; \mathbb{R}^d)} := \max_{1 \le i \le d} \sup_{t \in [0,1]} |f^i(t)| + \max_{1 \le i \le d} \sup_{t \in [0,1]} |(f^i)'(t)| + [f']_{\beta}.
\end{equation*}

In this paper, the stochastic processes and reference paths under consideration fundamentally vanish at the origin (or we consider their deviations from the initial state $x_0$). This structural property allows for a significant simplification of the geometric topologies.

Let $C^\beta_0([0,1]; \mathbb{R}^d) := \{ f \in C^\beta([0,1]; \mathbb{R}^d) : f(0) = 0 \}$ be the closed subspace of functions vanishing at the origin. For any \( f \in C^\beta_0([0,1]; \mathbb{R}^d) \), the inequality
\[
\sup_{t \in [0,1]} |f^i(t)| \le [f^i]_\beta
\]
holds trivially. Thus, when the initial value is zero, the semi-norm \([f]_\beta\) itself is already a proper norm, equivalent to the full H\"{o}lder norm.

Similarly, we define the higher regularity subspace pinned at the origin as $C^{1+\beta}_0([0,1]; \mathbb{R}^d) := \{ f \in C^{1+\beta}([0,1]; \mathbb{R}^d) : f(0) = 0 \}$. By virtue of the fundamental theorem of calculus, $f^i(t) = \int_0^t (f^i)'(s) \, ds$, which implies $\sup_{t \in [0,1]} |f^i(t)| \le \sup_{t \in [0,1]} |(f^i)'(t)|$. Hence, the position supremum becomes redundant and can be omitted.

Throughout the remainder of this paper, since all relevant trajectories and error terms are analyzed within the zero-initial-condition subspaces $C^\beta_0$ and $C^{1+\beta}_0$, we adopt the strict convention that the notations $\|\cdot\|_\beta$ and $\|\cdot\|_{1+\beta}$ refer exclusively to the streamlined norms:
\begin{align*}
    \|f\|_\beta &:= [f]_\beta = \max_{1 \le i \le d} \sup_{0 \le s < t \le 1} \frac{|f^i(t) - f^i(s)|}{|t - s|^\beta}, \\
    \|f\|_{1+\beta} &:= \max_{1 \le i \le d} \sup_{t \in [0,1]} |(f^i)'(t)| + [f']_{\beta}.
\end{align*}
Under this convention, for any
$f\in C^{1+\beta}_0([0,1];\mathbb{R}^d)$, we have
\[
    \|f\|_{1+\beta}
    =
    \|f'\|_{C^\beta([0,1];\mathbb{R}^d)} .
\]

\subsection{Fractional Calculus}
We present the core machinery of fractional calculus required for handling fractional Brownian motion. For a comprehensive treatise, we refer the reader to Samko et al.~\cite{Samko1993}.

\begin{definition}
    Let $f \in L^1([a,b];\mathbb{R})$. For $\alpha > 0$, the operators
    \begin{align*}
        (I_{a^+}^\alpha f)(x) &:= \frac{1}{\Gamma(\alpha)} \int_a^x (x - y)^{\alpha-1} f(y) \, dy, \quad x \ge a, \\
        (I_{b^-}^\alpha f)(x) &:= \frac{1}{\Gamma(\alpha)} \int_x^b (y - x)^{\alpha-1} f(y) \, dy, \quad x \le b,
    \end{align*}
    are called the left-sided and right-sided Riemann--Liouville fractional integrals of order $\alpha$, respectively.
\end{definition}

For any $\alpha > 0$, if $f \in L^p([a,b];\mathbb{R})$ and $g \in L^q([a,b];\mathbb{R})$ with $1/p + 1/q \le 1 + \alpha$ (and $p, q \ge 1$), the following fractional integration-by-parts formula holds:
\begin{equation}
    \int_a^b f(s) (I_{a^+}^\alpha g)(s) \, ds = \int_a^b (I_{b^-}^\alpha f)(s) g(s) \, ds. \label{ffubini}
\end{equation}
We denote by $I_{a^+}^\alpha(L^p)$ and $I_{b^-}^\alpha(L^p)$ the images of $L^p([a,b];\mathbb{R})$ under the fractional integral operators $I_{a^+}^\alpha$ and $I_{b^-}^\alpha$, respectively. 

\begin{definition}
    Let $f \in I_{a^+}^\alpha(L^p)$ and $g \in I_{b^-}^\alpha(L^p)$ with $p \ge 1$. The left-sided and right-sided Riemann--Liouville fractional derivatives of order $\alpha > 0$ are defined respectively by
    \begin{align*}
        (D_{a^+}^\alpha f)(x) &:= \left( \frac{d}{dx} \right)^{[\alpha] + 1} I_{a^+}^{1 + [\alpha] - \alpha} f(x), \\
        (D_{b^-}^\alpha g)(x) &:= \left( -\frac{d}{dx} \right)^{[\alpha] + 1} I_{b^-}^{1 + [\alpha] - \alpha} g(x),
    \end{align*}
    where $[\alpha]$ denotes the integer part of $\alpha$.
\end{definition}

From \eqref{ffubini}, we deduce that for $0 < \alpha < 1$, the dual relation for fractional derivatives reads
\begin{equation}
    \int_a^b f(s) (D_{a^+}^\alpha g)(s) \, ds = \int_a^b (D_{b^-}^\alpha f)(s) g(s) \, ds, \label{fffubini}
\end{equation}
provided that $f \in I^\alpha_{b^-}(L^p)$ and $g \in I^\alpha_{a^+}(L^q)$ with $1/p + 1/q \le 1 + \alpha$.

For any $f \in I_{a^+}^\alpha(L^p)$, the representation $f = I_{a^+}^\alpha(\phi)$ determines the function $\phi \in L^p$ uniquely. In this sense, the fractional derivative operator acts as the left inverse of the fractional integral operator.

When $\alpha p > 1$, any function in $I_{a^+}^\alpha(L^p)$ embeds continuously into the H\"{o}lder space $C^{\alpha - 1/p}([a,b])$. Conversely, every H\"{o}lder continuous function of order $\beta > \alpha$ admits a fractional derivative of order $\alpha$; see Decreusefond and \"Ust\"{u}nel~\cite[Proposition~2.1]{Decreusefond1999}. 

Crucially, for $0 < \alpha < 1$, these fractional derivatives can be evaluated explicitly via Marchaud's formula (or Weyl's representation, cf.~\cite[Remark~5.3]{Samko1993}). The left-sided derivative admits the representation
\begin{equation}
    D^\alpha_{a^+} f(x) = \frac{1}{\Gamma(1-\alpha)} \left( \frac{f(x)}{(x-a)^\alpha} + \alpha \int_a^x \frac{f(x) - f(y)}{(x-y)^{\alpha+1}} \, dy \right), \label{Weyl}
\end{equation}
where the improper integral converges in the $L^p$ sense. Analogously, the right-sided fractional derivative of order $\alpha$ is given by
\begin{equation}
    D_{T^-}^\alpha f(s) = \frac{1}{\Gamma(1-\alpha)} \left( \frac{f(s)}{(T-s)^\alpha} - \alpha \int_s^T \frac{f(u) - f(s)}{(u-s)^{\alpha+1}} \, du \right).
\end{equation}

		For vector-valued functions, fractional integration or differentiation is defined component-wise, by applying the respective fractional operators to each individual component.

 \subsection{The \texorpdfstring{$\sigma$}{sigma}-Weighted Fractional Cameron--Martin Space}
		
Let \(T>0\) and let
\[
    \Omega_H
    =
    C_0([0,T];\mathbb R^n)
\]
be the canonical path space, equipped with its Borel \(\sigma\)-algebra
\(\mathcal F_H\). For each \(H\in(0,1)\), we denote by \(\mathbb P_H\) the
Gaussian measure on \((\Omega_H,\mathcal F_H)\) under which the canonical
coordinate process
\[
    B_t(\omega):=\omega(t),
    \qquad t\in[0,T],
\]
is an \(n\)-dimensional fractional Brownian motion with Hurst parameter \(H\).
When the process is considered under \(\mathbb P_H\), we write it as
\(B^H=\{B_t^H,t\in[0,T]\}\). More precisely,
\[
    B_t^H
    =
    \bigl(B_t^{H,1},\ldots,B_t^{H,n}\bigr),
\]
where \(B^{H,1},\ldots,B^{H,n}\) are independent real-valued fractional
Brownian motions satisfying
\[
    \mathbb E_{\mathbb P_H}
    \bigl[
        B_t^{H,i}B_s^{H,j}
    \bigr]
    =
    \delta_{ij}R_H(t,s),
    \qquad 1\le i,j\le n,
\]
with
\[
    R_H(t,s)
    =
    \frac12
    \left(
        |t|^{2H}+|s|^{2H}-|t-s|^{2H}
    \right),
    \qquad s,t\in[0,T].
\]
Equivalently, \(\mathbb P_H\) is the product Gaussian measure induced by the
independent components \(B^{H,1},\ldots,B^{H,n}\).

In the special case \(H=\frac12\), the process \(B^{1/2}\) is the standard
\(n\)-dimensional Brownian motion. Accordingly, \(\mathbb P_{1/2}\) is the
standard Wiener measure on \(C_0([0,T];\mathbb R^n)\). For
\(H\neq \frac12\), the measure \(\mathbb P_H\) is the fractional Gaussian
measure associated with the \(n\)-dimensional fractional Brownian motion. In
this case, \(B^H\) is neither a semimartingale nor a Markov process. Moreover,
its sample paths almost surely belong to
\[
    C_0^{H-\varepsilon}([0,T];\mathbb R^n),
    \qquad \varepsilon\in(0,H).
\]
Throughout this paper, we write
\[
    \alpha=\left|H-\frac12\right|.
\]

For the probabilistic arguments below, in particular for the application of
the fractional Girsanov theorem, it is convenient to realize fractional
Brownian motion on a Wiener space. More precisely, we work on a complete
probability space
\[
    (\Omega_W,\mathcal F_W,\mathbb P_{1/2})
\]
carrying an \(n\)-dimensional standard Brownian motion
\(W=\{W_t,t\in[0,T]\}\). Then, for each \(H\in(0,1)\), fractional Brownian
motion can be represented as
\[
    B_t^H
    =
    \int_0^t K_H(t,s)\,dW_s,
    \qquad t\in[0,T],
\]
where \(K_H(t,s)\) is the standard Volterra kernel. In this sense, the law
\(\mathbb P_H\) introduced above is the image measure of
\(\mathbb P_{1/2}\) under the Volterra map \(W\mapsto K_HW\), namely
\[
    \mathbb P_H
    =
    \mathbb P_{1/2}\circ (K_H)^{-1},
\]
where the notation \(K_HW\) stands for the Gaussian process defined by the
above stochastic integral. Thus, although the canonical coordinate process has
law \(\mathbb P_H\) on \(C_0([0,T];\mathbb R^n)\), all calculations involving
stochastic integrals with respect to \(W\) are carried out under the underlying
Wiener measure \(\mathbb P_{1/2}\).

The corresponding Volterra operator \(K_H\) is defined by
\begin{equation}
    (K_H u)(t)
    =
    \int_0^t K_H(t,s)u(s)\,ds,
    \qquad u\in L^2([0,T];\mathbb R^n).
    \label{kh_operator}
\end{equation}
The range
\[
    \mathcal H_H([0,T];\mathbb R^n)
    :=
    K_H\bigl(L^2([0,T];\mathbb R^n)\bigr)
\]
is the Cameron--Martin space associated with the law of \(B^H\). It is
equipped with the inner product
\[
    \langle K_H u,K_H v\rangle_{\mathcal H_H}
    :=
    \langle u,v\rangle_{L^2([0,T];\mathbb R^n)}.
\]
With this inner product, \(\mathcal H_H([0,T];\mathbb R^n)\) is a separable
Hilbert space.

When \(H=\frac12\), one has
\[
    (K_{1/2}u)(t)
    =
    \int_0^t u(s)\,ds.
\]
Therefore the Cameron--Martin space reduces to the classical space
\[
    \mathcal H_{1/2}([0,T];\mathbb R^n)
    =
    \left\{
        h\in AC([0,T];\mathbb R^n):
        h(0)=0, h'\in L^2([0,T];\mathbb R^n)
    \right\},
\]
with norm
\[
    \|h\|_{\mathcal H_{1/2}}
    =
    \|\dot h\|_{L^2([0,T];\mathbb R^n)}.
\]

For general \(H\in(0,1)\), the Cameron--Martin space
\(\mathcal H_H\) admits an explicit characterization through fractional
calculus. In particular, by Nualart~\cite[Lemma 10]{Nualart}, the inverse
operator \(K_H^{-1}\) can be expressed in terms of left-sided
Riemann--Liouville fractional derivatives. More precisely, for
\(h\in\mathcal H_H\), one has
\begin{equation}
    \begin{aligned}
        (K_H)^{-1}h(s)
        &=
        s^{\alpha}
        D_{0^+}^{\alpha}
        \left(
            s^{-\alpha}
            D_{0^+}^{1-2\alpha}h
        \right)(s),
        && H<\frac12,
        \\
        (K_H)^{-1}h(s)
        &=
         h(s)',
        && H=\frac12,
        \\
        (K_H)^{-1}h(s)
        &=
        s^{\alpha}
        D_{0^+}^{\alpha}
        \left(
            s^{-\alpha} h'
        \right)(s),
        && H>\frac12 .
    \end{aligned}
    \label{eq:KH_inverse}
\end{equation}
If \(H<\frac12\) and \(h\) is sufficiently smooth, for instance absolutely
continuous with \(h(0)=0\), then the first formula can be written in the
simpler form
\begin{equation}
    (K_H)^{-1}h(s)
    =
    s^{-\alpha}
    I_{0^+}^{\alpha}
    \left(
        s^{\alpha} h'
    \right)(s),
    \qquad
    \alpha=\frac12-H .
    \label{eq:KH_inverse_H_less_half_smooth}
\end{equation}

Since this paper deals with systems driven by deterministic time-dependent
diffusion coefficients, stochastic integrals with respect to \(B^H\) are
understood in the pathwise sense whenever Young integration is applicable.

\begin{lemma}[Young integration]
\label{young}
Let \(f\in C^\beta([0,T];\mathbb R)\) and
\(g\in C^\gamma([0,T];\mathbb R)\), where
\(\beta,\gamma\in(0,1)\) and \(\beta+\gamma>1\). Then the
Riemann--Stieltjes integral
\[
    \int_0^t f_s\,dg_s
\]
is well defined pathwise for all \(t\in[0,T]\). Moreover, there exists a
constant \(C_{\beta,\gamma,T}>0\) such that, for any
\(0\le s<t\le T\),
\begin{equation}
    \left|
        \int_s^t f_r\,dg_r
        -
        f_s(g_t-g_s)
    \right|
    \le
    C_{\beta,\gamma,T}
    [f]_{\beta}
    [g]_{\gamma}
    |t-s|^{\beta+\gamma}.
    \label{youngint}
\end{equation}
\end{lemma}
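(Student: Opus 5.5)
We prove \eqref{youngint} by the classical sewing argument via dyadic refinement of Riemann sums; existence of the integral comes out of the same estimate. Fix $0\le s<t\le T$. For a partition $\mathcal P=\{s=t_0<t_1<\dots<t_N=t\}$ set
\[
S(\mathcal P)=\sum_{k=0}^{N-1} f_{t_k}\,(g_{t_{k+1}}-g_{t_k}).
\]
For the trivial partition, $S=f_s(g_t-g_s)$.

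The key step is Young's point-removal argument. Since $N-1$ interior points exist, some interior point $t_k$ satisfies
\[
t_{k+1}-t_{k-1}\le \frac{2(t-s)}{N-1}.
\]
Removing it changes the sum by
\[
\bigl(f_{t_k}-f_{t_{k-1}}\bigr)\bigl(g_{t_{k+1}}-g_{t_k}\bigr),
\]
which is bounded in absolute value by
\[
[f]_\beta[g]_\gamma\,|t_{k+1}-t_{k-1}|^{\beta+\gamma}\le [f]_\beta[g]_\gamma\Bigl(\frac{2(t-s)}{N-1}\Bigr)^{\beta+\gamma}.
\]
We iterate, removing points one at a time until only the trivial partition remains. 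This gives
\[
|S(\mathcal P)-f_s(g_t-g_s)|\le 2^{\beta+\gamma}\zeta(\beta+\gamma)\,[f]_\beta[g]_\gamma\,|t-s|^{\beta+\gamma}.
\]
The bound is uniform in $\mathcal P$. The zeta series converges precisely because $\beta+\gamma>1$, and this is the only place the hypothesis enters.

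Next we show the Riemann--Stieltjes limit exists. Let $\mathcal P$ and $\mathcal Q$ be two partitions, and let $\mathcal P\vee\mathcal Q$ be their common refinement. On each subinterval $[t_k,t_{k+1}]$ of $\mathcal P$, we apply the uniform bound to the restriction of $\mathcal P\vee\mathcal Q$. This yields
\[
|S(\mathcal P)-S(\mathcal P\vee\mathcal Q)|\le C[f]_\beta[g]_\gamma\sum_k |t_{k+1}-t_k|^{\beta+\gamma}\le C[f]_\beta[g]_\gamma\,|\mathcal P|^{\beta+\gamma-1}(t-s).
\]
The right-hand side tends to $0$ as the mesh $|\mathcal P|\to0$. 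The same holds with $\mathcal Q$ in place of $\mathcal P$, so the sums form a Cauchy net. Their limit defines $\int_s^t f_r\,dg_r$, and it is independent of the tags used.

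Passing to the limit in the uniform bound gives \eqref{youngint}, with
\[
C_{\beta,\gamma,T}=2^{\beta+\gamma}\zeta(\beta+\gamma).
\]
This constant is in fact independent of $T$. Additivity in the interval, and the case $s=0$, follow directly from the construction.

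The only delicate step is choosing the removal point so that the errors sum to a convergent series. Everything else is routine.
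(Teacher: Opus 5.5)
Your proof is correct. The paper offers nothing to compare it against: it quotes this lemma as a classical fact, with no proof and no reference.

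What you give is Young's original point-removal argument, in Lo\`eve's form. You pick the interior point $t_k$ with $t_{k+1}-t_{k-1}\le 2(t-s)/(N-1)$ and remove it at cost $|(f_{t_k}-f_{t_{k-1}})(g_{t_{k+1}}-g_{t_k})|$. Iterating down to the trivial partition gives the bound $2^{\beta+\gamma}\zeta(\beta+\gamma)[f]_\beta[g]_\gamma|t-s|^{\beta+\gamma}$ uniformly in the partition. Applying that bound on each cell of $\mathcal P$ yields
\[
|S(\mathcal P)-S(\mathcal P\vee\mathcal Q)|\le C[f]_\beta[g]_\gamma|\mathcal P|^{\beta+\gamma-1}(t-s).
\]
So the sums converge as the mesh tends to zero, which is convergence in the genuine Riemann--Stieltjes sense the statement asserts. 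The constant $2^{\beta+\gamma}\zeta(\beta+\gamma)$ is indeed independent of $T$, which is stronger than the $C_{\beta,\gamma,T}$ the statement allows.

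Two small presentational points. First, your opening sentence announces a sewing argument via dyadic refinement. What you actually carry out is the greedy removal of an optimally chosen point, which is not dyadic, so that sentence should be corrected to describe the argument you use. Second, the claim that the limit does not depend on the tags is asserted but not shown. It takes one line: for tags $\xi_k\in[t_k,t_{k+1}]$,
\[
\Bigl|\sum_k \bigl(f_{\xi_k}-f_{t_k}\bigr)\bigl(g_{t_{k+1}}-g_{t_k}\bigr)\Bigr|
\le [f]_\beta[g]_\gamma\,|\mathcal P|^{\beta+\gamma-1}(t-s)\longrightarrow 0 .
\]
With these touches, the argument is a complete, self-contained proof of the lemma.
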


The preceding definition extends componentwise to vector-valued and
matrix-valued integrands. In particular, if
\(f\in C^\beta([0,T];\mathbb R^{n\times n})\) and
\(g\in C^\gamma([0,T];\mathbb R^n)\) with \(\beta+\gamma>1\), then
\[
    \int_0^t f_s\,dg_s
\]
is well defined as an \(\mathbb R^n\)-valued Young integral.

To ensure the well-posedness of the degenerate system \eqref{fir} and the applicability of the fractional Girsanov theorem, we impose the following assumptions on the coefficients.
\begin{assumption}[A]\label{ass:A}
We impose the following assumptions on the coefficients of the degenerate system~\eqref{fir}.

\textbf{(1) General conditions.}
\begin{enumerate}
    \item \textbf{Diffusion coefficient $\sigma$:} The matrix $\sigma_t = \text{diag}(\sigma^1_t, \dots, \sigma^n_t)$ is deterministic. Each component satisfies $\sigma^i \in C^1([0,1])$ and there exist constants $m_i, M_i > 0$ such that $0 < m_i \leq \sigma^i_t \leq M_i$ for all $t \in [0,1]$.
    \item \textbf{Stochastic drift $b$:} The function $b: [0,1] \times \mathbb{R}^m \times \mathbb{R}^n \to \mathbb{R}^n$ is continuous in $(t,x,y)$. It is twice continuously differentiable with respect to the spatial variables $(x,y)$, and is globally bounded.
    \item \textbf{Deterministic drift $a$:} The function $a: [0,1] \times \mathbb{R}^m \times \mathbb{R}^n \to \mathbb{R}^m$ is continuously differentiable ($C^1$) with respect to all its variables $(t,x,y)$. Furthermore, it satisfies a global Lipschitz condition with respect to all three variables. That is, there exists a constant $L_a > 0$ such that
    \[
        |a_s(x_1, y_1) - a_t(x_2, y_2)| \leq L_a \big( |s-t| + |x_1-x_2| + |y_1-y_2| \big),
    \]
    for all $s,t \in [0,1]$, $x_1, x_2 \in \mathbb{R}^m$, and $y_1, y_2 \in \mathbb{R}^n$.
\end{enumerate}

\textbf{(2) Additional conditions depending on the Hurst exponent $H \in (1/2, 1)$.}
    In addition to the general conditions, the coefficient $b$ satisfy a global Lipschitz condition with respect to all their variables. Specifically, there exists a constant $L_b > 0$ such that
    \[
       |b_s(x_1, y_1) - b_t(x_2, y_2)| \leq L_b \big( |s-t| + |x_1-x_2| + |y_1-y_2| \big),
    \]
    for all $s,t \in [0,1]$, $x_1,x_2 \in \mathbb{R}^m$, and $y_1,y_2 \in \mathbb{R}^n$.
\end{assumption}

Under Assumption~\ref{ass:A}, we introduce the modified Volterra operator
\(K_H^\sigma\), which incorporates the time-dependent diffusion coefficient
\(\sigma\). For \(u\in L^2([0,T];\mathbb R^n)\), define
\begin{equation}
    (K_H^\sigma u)(t)
    :=
    \int_0^t \sigma_s\,d(K_Hu)(s),
    \qquad t\in[0,T].
    \label{khsigma}
\end{equation}
Since \(\sigma\in C^1([0,T];\mathbb R^{n\times n})\), this integral is well
defined pathwise as a Young integral. Moreover, since \(\sigma_t\) is
uniformly non-degenerate, the operator \(K_H^\sigma\) is injective.

We now define the \(\sigma\)-weighted fractional Cameron--Martin space by
\begin{equation}
    \mathcal H_H^\sigma([0,T];\mathbb R^n)
    :=
    K_H^\sigma
    \bigl(
        L^2([0,T];\mathbb R^n)
    \bigr).
    \label{eq:sigma_weighted_CM_space}
\end{equation}
Equivalently,
\[
    \mathcal H_H^\sigma([0,T];\mathbb R^n)
    =
    \left\{
        h\in C_0([0,T];\mathbb R^n):
        \int_0^\cdot \sigma_s^{-1}\,dh_s
        \in
        \mathcal H_H([0,T];\mathbb R^n)
    \right\}.
\]
The norm on \(\mathcal H_H^\sigma\) is defined by
\begin{equation}
    \|h\|_{\mathcal H_H^\sigma}
    :=
    \|(K_H^\sigma)^{-1}h\|_{L^2([0,T];\mathbb R^n)}.
    \label{eq:sigma_weighted_CM_norm}
\end{equation}
In terms of the standard Cameron--Martin inverse, this can be written as
\begin{equation}
    (K_H^\sigma)^{-1}h
    =
    K_H^{-1}
    \left(
        \int_0^\cdot \sigma_s^{-1}\,dh_s
    \right).
    \label{eq:Ksigma_inverse}
\end{equation}

If \(h\) is sufficiently smooth, then \((K_H^\sigma)^{-1}h\) admits the
following explicit expressions:
\begin{equation}
    (K_H^\sigma)^{-1}h(s)
    =
    \begin{cases}
    \displaystyle
    s^{-\alpha}
    I_{0^+}^{\alpha}
    \left(
        s^{\alpha}\sigma_s^{-1} h(s)'
    \right),
    & H<\frac12,\quad \alpha=\frac12-H,
    \\[1.2em]
    \displaystyle
    \sigma_s^{-1} h(s)',
    & H=\frac12,
    \\[1.2em]
    \displaystyle
    s^{\alpha}
    D_{0^+}^{\alpha}
    \left(
        s^{-\alpha}\sigma_s^{-1} h(s)'
    \right),
    & H>\frac12,\quad \alpha=H-\frac12 .
    \end{cases}
    \label{eq:Ksigma_inverse_explicit}
\end{equation}

The space \(\mathcal H_H^\sigma\) is the natural Cameron--Martin space
associated with the Gaussian process
\[
    \int_0^\cdot \sigma_s\,dB_s^H .
\]
It therefore provides the natural class of admissible skeleton paths for the
system considered in this paper. More precisely, if a target path \(\phi\)
satisfies
\[
    \phi-y_0\in \mathcal H_H^\sigma([0,T];\mathbb R^n),
\]
then there exists a unique control
\[
    \dot{\phi}\in L^2([0,T];\mathbb R^n)
\]
such that
\begin{equation}
    \phi_t-y_0
    =
    (K_H^\sigma  \dot{\phi})(t),
    \qquad t\in[0,T].
    \label{khs}
\end{equation}
Equivalently,
\[
    \dot{\phi}
    =
    (K_H^\sigma)^{-1}(\phi-y_0).
\]
Here \( \dot{\phi}\) should be understood as the Cameron--Martin control associated
with \(\phi\), and not necessarily as the classical time derivative of
\(\phi\) when \(H\neq\frac12\).

Since \(\sigma\) and \(\sigma^{-1}\) are \(C^1\) and bounded, the map
\[
    h
    \mapsto
    \int_0^\cdot \sigma_s\,dh_s
\]
is an isomorphism on the fractional Cameron--Martin space. Consequently,
\(\mathcal H_H^\sigma\) coincides with \(\mathcal H_H\) as a set, but it is
equipped with the equivalent norm induced by \(K_H^\sigma\). In particular,
by the fractional Sobolev embedding, elements of
\(\mathcal H_H^\sigma([0,T];\mathbb R^n)\) belong to
\[
    C_0^\beta([0,T];\mathbb R^n),
    \qquad \beta<H .
\]

\begin{remark}
The boundedness assumptions in Assumption~\ref{ass:A} are mainly imposed to
ensure the validity of the Girsanov transformation and the estimates used
below. We do not aim to pursue optimal regularity assumptions on the
coefficients in this paper.
\end{remark}

\begin{remark}
In the non-degeneracy condition on \(\sigma\), we may assume without loss of
generality that each component \(\sigma^i\) is strictly positive. Indeed, if a
component has the opposite sign, then
\[
    \sigma_t^i\,dB_t^{H,i}
    =
    -\sigma_t^i\,d(-B_t^{H,i}),
\]
and \(-B^{H,i}\) is again a one-dimensional fractional Brownian motion.
\end{remark}

\subsection{Small-ball estimates and the Gaussian correlation inequality}
In this subsection, we present several small-ball estimates that will be used
in the derivation of the OM functional. It is readily verified
that, under the non-degeneracy condition on $\sigma$, the H\"older norm
\[
    \left\|
        \int_0^\cdot \sigma_s\,dB_s^H
    \right\|_\beta
\]
is a measurable norm on the Cameron--Martin space in the sense of
\cite{Nualart}. Consequently, the corresponding estimates in \cite{Nualart}
can be extended to the present time-dependent setting.

For $\varepsilon>0$, we denote the H\"older small-ball event by
\[
    A_\varepsilon
    :=
    \left\{
    \left\|
        \int_0^\cdot \sigma_s\,dB_s^H
    \right\|_\beta
    <
    \varepsilon
    \right\}.
\]

\begin{lemma}\label{estimate1}
Let $0<\beta<H$. Then, for any $h\in L^2([0,1])$, one has
\begin{equation}
    \lim_{\varepsilon\to0}
    \mathbb{E}
    \left[
        \exp\left(
            \int_0^1 h(s)\,dW_s
        \right)
        \,\middle|\,
        A_\varepsilon
    \right]
    =
    1 .
\end{equation}
\end{lemma}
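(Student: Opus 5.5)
The plan is to reduce the statement to the standard argument of Nualart for symmetric convex small-ball events. Work on the Wiener space $(\Omega_W,\mathcal F_W,\mathbb P_{1/2})$, where $B^H=K_HW$ and hence $\int_0^\cdot\sigma_s\,dB^H_s=K_H^\sigma W$ in the sense of \eqref{khsigma}. The set $A_\varepsilon$ is then the preimage under the linear map $W\mapsto K_H^\sigma W$ of a ball of the measurable norm $\|\cdot\|_\beta$. It is therefore a symmetric convex set of $W$-paths, and it has positive probability for every $\varepsilon>0$.

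The first step is symmetrization. The variable $\xi:=\int_0^1 h\,dW$ is a linear Gaussian functional, so it is odd under $W\mapsto -W$, while $A_\varepsilon$ is invariant under this map. Hence
\[
\mathbb E[e^{\xi}\mid A_\varepsilon]=\mathbb E[\cosh\xi\mid A_\varepsilon]\ge 1 .
\]
This gives the lower bound $\liminf\ge 1$.

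The second step is the upper bound via the Gaussian correlation inequality (or, in this one-dimensional-projection case, Anderson's inequality for the symmetric slab). For every $c>0$ the slab $\{|\xi|\le c\}$ is symmetric convex. Its intersection behaviour with $A_\varepsilon$ shows that the conditional law of $\xi$ given $A_\varepsilon$ is more concentrated than the unconditional law, that is, $\mathbb P(|\xi|>c\mid A_\varepsilon)\le\mathbb P(|\xi|>c)$. Since $\xi\sim N(0,\|h\|_{L^2}^2)$ has Gaussian tails, the family $\{e^{\xi}\}$ is uniformly integrable under $\mathbb P(\cdot\mid A_\varepsilon)$, uniformly in $\varepsilon$. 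It therefore suffices to show $\xi\to0$ in probability under the conditional measures, i.e. $\mathbb P(|\xi|>\delta\mid A_\varepsilon)\to0$ for every $\delta>0$.

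The third step, which I expect to be the main obstacle, is this conditional convergence in probability. I would first handle $h$ such that $\xi$ is a continuous linear functional of the path $K_H^\sigma W$ with respect to $\|\cdot\|_\beta$. A natural choice is $h=(K_H^\sigma)^*\ell$ restricted to a dense class, e.g.\ $h$ smooth, where integration by parts together with \eqref{eq:Ksigma_inverse_explicit} writes $\xi$ as a Young integral against $K_H^\sigma W$. For such $h$ one has $|\xi|\le C_h\varepsilon$ on $A_\varepsilon$, so $e^{\xi}\to1$ uniformly on $A_\varepsilon$. For general $h\in L^2$, approximate by $h_k$ in this class, and use the upper bound from the second step applied to the remainder:
\[
\mathbb E[e^{p(\xi-\xi_k)}\mid A_\varepsilon]\le \mathbb E[\cosh(p(\xi-\xi_k))]=e^{p^2\|h-h_k\|^2/2}.
\]
Here I use that $\cosh$ of a Gaussian linear functional is an increasing function of $|\xi-\xi_k|$, together with the slab comparison, which is valid because $\beta<H$ makes the norm measurable. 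Combining this with Hölder's inequality gives
\[
\limsup_{\varepsilon\to0}\mathbb E[e^{\xi}\mid A_\varepsilon]\le e^{C\|h-h_k\|^2}\to1 \quad\text{as } k\to\infty,
\]
which finishes the proof.
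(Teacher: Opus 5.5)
Your argument is correct. The paper gives no proof of its own for this lemma; it imports it from Moret--Nualart after noting that $\bigl\|\int_0^\cdot\sigma_s\,dB^H_s\bigr\|_\beta$ is a measurable norm. Your three steps reproduce the standard Shepp--Zeitouni/Moret--Nualart argument behind that citation: the symmetrization lower bound, the slab comparison (a special case of Theorem~\ref{conditional-expectation}, with $f=|\xi|$ or $f=\cosh(p(\xi-\xi_k))$), and the density-plus-H\"older step.

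The one point to tighten is your example of the dense class. For $H>\frac12$ and smooth $h$, the integrand representing $\int_0^1 h\,dW$ against $B^H$ generically behaves like $s^{-(H-1/2)}$ near $0$ and $(1-s)^{-(H-1/2)}$ near $1$. It is therefore $\|\cdot\|_\beta$-continuous only when $\beta>H-\frac12$, which is the regime of Theorem~\ref{result} but not the full range $0<\beta<H$. To cover all $0<\beta<H$, use instead the $h$ for which $\int_0^1 h\,dW=\int_0^1 g_s\sigma_s^{-1}\,d\bigl(\int_0^s\sigma_u\,dB^H_u\bigr)$ with $g\in C_c^\infty((0,1))$, or abstractly the pullbacks of dual elements of the separable little H\"older space. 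These are sup-norm continuous and still dense in $L^2$.
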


We also need a second-order version of the preceding estimate. Let
$f\in L^2([0,1]^2)$ be symmetric. The Hilbert--Schmidt operator associated
with $f$ is defined by
\begin{equation}\label{eq:Kf}
    (K(f)h)(t)
    :=
    \int_0^1 f(t,u)h(u)\,du,
    \qquad h\in L^2([0,1]).
\end{equation}
When $K(f)$ is trace class and $f$ is continuous, its trace is given by
\[
    \operatorname{Tr}K(f)
    =
    \int_0^1 f(s,s)\,ds .
\]
For notational simplicity, we write
\[
    \operatorname{Tr}f
    :=
    \operatorname{Tr}K(f).
\]

\begin{lemma}\label{estimate2}
Let $f\in L^2([0,1]^2)$ be symmetric. Assume that $K(f)$ is trace class and
that $0<\beta<H$. Then
\[
    \lim_{\varepsilon\to0}
    \mathbb{E}
    \left[
        \exp\left(
            \int_0^1\int_0^1 f(s,t)\,dW_s\,dW_t
        \right)
        \,\middle|\,
        A_\varepsilon
    \right]
    =
    \exp\bigl(-\operatorname{Tr}f\bigr).
\]
\end{lemma}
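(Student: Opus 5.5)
This is the multidimensional, $\sigma$-weighted analogue of the second-order small-ball estimate of Moret--Nualart. I would reduce it to the known result, the Hölder-norm version in \cite{Nualart}. That result holds for any \emph{measurable norm} on the Cameron--Martin space for which the abstract Wiener space structure applies.

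\textbf{Step 1: express $A_\varepsilon$ through $W$.} Write $\int_0^\cdot\sigma_s\,dB^H_s = K_H^\sigma W$, understood as the Gaussian process $\int_0^\cdot\sigma_s\,d(K_H\dot W)$. Then $A_\varepsilon=\{\|K^\sigma_H W\|_\beta<\varepsilon\}$. Define the seminorm $N(h):=\|K_H^\sigma \dot h\|_\beta$ on the Cameron--Martin space $\mathcal H_{1/2}$ of $W$. Check that:
\begin{itemize}
\item $N$ is a genuine norm, by injectivity of $K_H^\sigma$;
\item $N$ is symmetric and convex;
\item $N$ is measurable in Gross's sense.
\end{itemize}
For measurability, use $0<\beta<H$. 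Under this condition $K_H^\sigma$ maps $L^2$ compactly into $C^{\beta'}_0$ for some $\beta<\beta'<H$, via the embedding $I^{H+1/2}_{0^+}(L^2)\hookrightarrow C^{H}$ and boundedness of $\sigma,\sigma^{-1}$ in $C^1$ for the Young integral. Finite-rank approximations $K_H^\sigma P_n$ then converge in probability in $C^\beta$, which is the standard criterion.

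\textbf{Step 2: the classical argument.} With $N$ measurable, run the Moret--Nualart proof.
\begin{itemize}
\item Diagonalize $K(f)=\sum_k\lambda_k e_k\otimes e_k$, with $\sum|\lambda_k|<\infty$ by the trace-class assumption. The double integral (Stratonovich/trace-corrected as in the paper's convention) becomes $\sum_k\lambda_k(\xi_k^2-1)$ with $\xi_k=\int e_k\,dW$, plus the trace correction producing $-\operatorname{Tr}f$.
\item Truncate to finitely many modes.
\item On $A_\varepsilon$ the finite-dimensional Gaussian coordinates $\xi_1,\dots,\xi_M$ tend to zero in conditional probability. This follows from the Anderson inequality and the Gaussian correlation property for symmetric convex sets, which give $\mathbb P(|\xi_k|\le\delta\mid A_\varepsilon)\to1$ because $N$ is a norm and the linear functional $\xi_k$ is continuous relative to it on the support.
\item So $\exp\big(\sum_{k\le M}\lambda_k\xi_k^2\big)\to1$ conditionally.
\item Use uniform integrability: conditional exponential moments are bounded, since for symmetric convex $A_\varepsilon$, $\mathbb E[e^{c\,\zeta^2}\mid A_\varepsilon]\le \mathbb E[e^{c\,\zeta^2}]$ for centered Gaussian $\zeta$ (the GCI direction).
\end{itemize}

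\textbf{Step 3: the tail.} Control the tail $\sum_{k>M}\lambda_k(\xi_k^2-1)$ by Hölder's inequality and the same unconditional bound. Its exponential moment is close to $1$ uniformly in $\varepsilon$ as $M\to\infty$, because $\sum_{k>M}|\lambda_k|\to0$.

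\textbf{Main obstacle.} The hard step is uniformity: the tail must be bounded by unconditional expectations regardless of $\varepsilon$. Here I would rely on the Gaussian correlation inequality, the ingredient the paper announces, to compare $\mathbb E[e^{\lambda\xi^2}\mathbf 1_{A_\varepsilon}]$ with $\mathbb E[e^{\lambda\xi^2}]\,\mathbb P(A_\varepsilon)$. This is immediate for $\lambda\le0$ via GCI. For $\lambda>0$ it holds after splitting positive and negative eigenvalues and applying the Cauchy--Schwarz inequality.

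The $n$-dimensional, diagonal-$\sigma$ structure requires no change beyond indexing $W$ componentwise, since $L^2([0,1];\mathbb R^n)$ is again a separable Hilbert space.
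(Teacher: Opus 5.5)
The paper does not prove this lemma itself. It only asserts that $\|\int_0^\cdot\sigma_s\,dB^H_s\|_\beta$ is a measurable norm and imports the Moret--Nualart estimate, so your Step~1 is the paper's entire argument. Even there, your justification falls short: compactness of $K_H^\sigma:L^2\to C^{\beta'}_0$ does not by itself give Gross measurability, since compact maps need not be radonifying. What you need is convergence in probability of $\sum_k\xi_kK_H^\sigma e_k$ in the separable little H\"older space, which follows from Kolmogorov or Garsia--Rodemich--Rumsey bounds on $B^H$ using $\beta<H$.

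Your re-derivation in Steps~2--3 has a genuine gap in the finitely many head modes. Your only uniform-integrability tool is $\mathbb E[e^{c\zeta^2}\mid A_\varepsilon]\le\mathbb E[e^{c\zeta^2}]$, and its right-hand side is $+\infty$ once $c\,\mathrm{Var}(\zeta)\ge\frac12$. A trace-class $K(f)$ can have arbitrarily large eigenvalues. For example, take $f=e\otimes e$ with $\|e\|_{L^2}=1$: then $\mathbb E[\exp(\int\!\int f\,dW\,dW)]=\mathbb E[e^{\xi^2-1}]=\infty$, yet the lemma asserts the finite conditional limit $e^{-1}$. Moreover, combining terms by H\"older's inequality, as in the paper's scheme, needs the lemma for arbitrary multiples $cf$. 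So for every mode with $p\lambda_k\ge\frac12$, where $p\ge1$ is the H\"older exponent, you have no upper bound at all. Your ``main obstacle'' discussion only covers the tail, where $|\lambda_k|$ is small. Relatedly, $\xi_k=\int e_k\,dW$ is \emph{not} continuous with respect to $N$ for a general $e_k\in L^2$. Anderson's inequality and the GCI only give $\mathbb P(|\xi_k|\le\delta\mid A_\varepsilon)\ge\mathbb P(|\xi_k|\le\delta)$, not convergence to $1$.

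The missing idea is approximation by genuinely continuous functionals.
\begin{itemize}
\item Since $h\mapsto K_H^\sigma h$ injects $L^2$ into the little H\"older space $B$, the adjoint map $B^*\to L^2$ has dense range. For $g$ in this range, $W(g)=\ell(K_H^\sigma W)$ for some $\ell\in B^*$, hence $|W(g)|\le\|\ell\|\varepsilon$ on $A_\varepsilon$.
\item For each $k\le M$ choose such a $g_k$ with $r_k=e_k-g_k$ small, and bound $\xi_k^2\le(1+\eta)W(g_k)^2+(1+\eta^{-1})W(r_k)^2$.
\item Apply Theorem~\ref{conditional-expectation} only to the convex symmetric form $\sum_{k\le M}p\lambda_k^+(1+\eta^{-1})W(r_k)^2$. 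Its unconditional exponential moment is finite and tends to $1$ as $\sum_k\lambda_k^+\|r_k\|_{L^2}^2\to0$, which gives $\limsup\le1$ for the head.
\item The matching $\liminf\ge1$ follows by bounded convergence from $e^{-\sum_k\lambda_k^-\xi_k^2}\le1$ and $\xi_k\to0$ in conditional probability. Take the latter from Lemma~\ref{estimate1} (the conditional moment generating function tends to $1$), not from continuity.
\end{itemize}

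Finally, fix the trace bookkeeping. The double Wiener--It\^o integral already equals $\sum_k\lambda_k(\xi_k^2-1)=\sum_k\lambda_k\xi_k^2-\operatorname{Tr}f$, so there is no additional trace correction. What has to be shown is $\mathbb E[\exp(\sum_k\lambda_k\xi_k^2)\mid A_\varepsilon]\to1$.
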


In addition, we shall use the following exponential integrability estimate.

\begin{lemma}\label{estimate1'}
Let $0<\beta<H$. Then, for any $h\in L^2([0,1])$, one has
\[
    \lim_{\varepsilon\to0}
    \mathbb{E}
    \left[
        \exp\left(
            \left|
                \int_0^1 h(s)\,dW_s
            \right|
        \right)
        \,\middle|\,
        A_\varepsilon
    \right]
    =
    1 .
\]
\end{lemma}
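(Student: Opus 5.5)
Since $e^{|x|}\le e^{x}+e^{-x}$, the natural plan is to reduce Lemma~\ref{estimate1'} to Lemma~\ref{estimate1} applied to both $h$ and $-h$. For the lower bound, note that $\exp(|\int_0^1 h\,dW|)\ge 1$, so
\[
\liminf_{\varepsilon\to0}\mathbb{E}\left[\exp\left(\left|\int_0^1 h(s)\,dW_s\right|\right)\,\middle|\,A_\varepsilon\right]\ge 1 .
\]
For the upper bound, adding the two applications of Lemma~\ref{estimate1} only gives $\limsup\le 2$. That is not sharp enough, so a finer argument is required.

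To sharpen it, set $\xi:=\int_0^1 h\,dW$ and fix $\delta>0$. On $\{|\xi|\le\delta\}$ the integrand is at most $e^{\delta}$. On $\{|\xi|>\delta\}$ we bound it by $e^{\xi}+e^{-\xi}$ and use Cauchy--Schwarz under the conditional measure:
\[
\mathbb{E}\big[e^{|\xi|}\mathbf 1_{\{|\xi|>\delta\}}\,\big|\,A_\varepsilon\big]
\le \Big(\mathbb{E}\big[e^{2\xi}+e^{-2\xi}\,\big|\,A_\varepsilon\big]\Big)^{1/2}\,
\mathbb{P}\big(|\xi|>\delta\,\big|\,A_\varepsilon\big)^{1/2}.
\]
Lemma~\ref{estimate1}, applied to $\pm2h$, keeps the first factor bounded as $\varepsilon\to0$ (its limit is at most $\sqrt2$). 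It therefore suffices to show that $\mathbb{P}(|\xi|>\delta\mid A_\varepsilon)\to0$ for every $\delta>0$. Letting $\varepsilon\to0$ and then $\delta\to0$ then gives $\limsup\le1$.

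The main obstacle is this conditional concentration of $\xi$ on the small ball. I would derive it from Lemma~\ref{estimate1} itself, applied to $\lambda h$ for every real $\lambda$. This shows that the conditional moment generating function of $\xi$ converges to $1$ for all $\lambda$, i.e.\ to the moment generating function of the point mass at $0$. By the standard continuity theorem for moment generating functions finite on a neighbourhood of the origin, the conditional law of $\xi$ converges weakly to $\delta_0$, which gives $\mathbb{P}(|\xi|>\delta\mid A_\varepsilon)\to0$.

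A more hands-on alternative uses the Chernoff bounds
\[
\mathbb{P}(\xi>\delta\mid A_\varepsilon)\le e^{-\lambda\delta}\,\mathbb{E}\big[e^{\lambda\xi}\mid A_\varepsilon\big],
\]
together with its analogue for $-\xi$. Taking $\limsup_{\varepsilon\to0}$ gives $e^{-\lambda\delta}$, and letting $\lambda\to\infty$ gives $0$. Either route completes the proof.
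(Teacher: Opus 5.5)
Your argument is correct, but it takes a different route from the paper. The paper does not localize at all. Write $U=\int_0^1 h\,dW$ and $\mathbb{E}^\varepsilon=\mathbb{E}[\cdot\,|\,A_\varepsilon]$. The paper uses Lemma~\ref{estimate1} only at $h$, together with the symmetry of $A_\varepsilon$ under $W\mapsto -W$, to get $\mathbb{E}^\varepsilon(e^{U}+e^{-U})\to 2$. It then rewrites $e^{U}+e^{-U}=e^{|U|}+e^{-|U|}$ and combines this with the Cauchy--Schwarz bound $\mathbb{E}^\varepsilon e^{|U|}\,\mathbb{E}^\varepsilon e^{-|U|}\ge 1$. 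If $A$ denotes the limit of $\mathbb{E}^\varepsilon e^{|U|}$, this yields $A(2-A)\ge 1$, i.e.\ $(A-1)^2\le 0$. That argument is shorter and needs the lemma only at $\pm h$. As written, however, it assumes the limit $A$ exists. To be complete, one has to note that $\mathbb{E}^\varepsilon e^{|U|}\le \mathbb{E}^\varepsilon(e^{U}+e^{-U})$ stays bounded and then apply the argument to every subsequential limit point. Your proof instead splits at $|\xi|=\delta$, controls the tail by Cauchy--Schwarz via $\pm 2h$, and proves conditional concentration by Chernoff bounds via $\lambda h$ for all $\lambda$. This uses Lemma~\ref{estimate1} along the whole ray $\lambda h$, which is legitimate since the lemma holds for every $h\in L^2$. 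In return, you work directly with $\liminf$ and $\limsup$, avoid both the existence issue and the symmetry argument, and obtain the stronger fact that the conditional law of $\xi$ converges weakly to $\delta_0$. As a result, your argument extends essentially verbatim to $\mathbb{E}[F(\xi)\,|\,A_\varepsilon]\to F(0)$ for any continuous $F$ with $|F(x)|\le Ce^{c|x|}$. The paper's identity trick, by contrast, is tied to the specific function $e^{|x|}$.
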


\begin{proof}
For simplicity, write
\[
    \mathbb{E}^\varepsilon[\cdot]
    :=
    \mathbb{E}[\cdot\,|\,A_\varepsilon],
    \qquad
    U
    :=
    \int_0^1 h(s)\,dW_s .
\]
By Lemma~\ref{estimate1},
\[
    \lim_{\varepsilon\to0}
    \mathbb{E}^\varepsilon\bigl(e^U\bigr)
    =
    1 .
\]
Since the event $A_\varepsilon$ is symmetric and $U$ is an odd linear functional
of the underlying Wiener path, the conditional distribution of $U$ under
$A_\varepsilon$ is symmetric. Hence
\[
    \mathbb{E}^\varepsilon\bigl(e^U\bigr)
    =
    \mathbb{E}^\varepsilon\bigl(e^{-U}\bigr),
\]
and therefore
\[
    \lim_{\varepsilon\to0}
    \mathbb{E}^\varepsilon\bigl(e^{-U}\bigr)
    =
    1 .
\]
It follows that
\[
\begin{aligned}
    2
    &=
    \lim_{\varepsilon\to0}
    \mathbb{E}^\varepsilon
    \bigl(e^U+e^{-U}\bigr)  \\
    &=
    \lim_{\varepsilon\to0}
    \mathbb{E}^\varepsilon
    \bigl(e^{|U|}+e^{-|U|}\bigr).
\end{aligned}
\]
Set
\[
    A
    :=
    \lim_{\varepsilon\to0}
    \mathbb{E}^\varepsilon\bigl(e^{|U|}\bigr).
\]
By Cauchy's inequality,
\[
    \mathbb{E}^\varepsilon\bigl(e^{|U|}\bigr)
    \mathbb{E}^\varepsilon\bigl(e^{-|U|}\bigr)
    \geq
    \left(
        \mathbb{E}^\varepsilon[1]
    \right)^2
    =
    1 .
\]
Passing to the limit gives
\[
    A(2-A)\geq 1.
\]
Equivalently,
\[
    (A-1)^2\leq 0.
\]
Thus $A=1$, which proves the claim.
\end{proof}

Analogous to the small-ball probability estimates of the H\"{o}lder norm for the standard fractional Brownian motion, a similar estimate holds for the multidimensional time-varying setting; the reader is referred to \cite{KuelbsLiShao1995} for a detailed proof.

\begin{theorem}
Let $\sigma$ satisfy Assumption~(A). Then there exists a positive constant $M_{\beta,H}$ such that for any $\varepsilon > 0$,
\begin{equation}
\mathbb{P}\left( A_\varepsilon \right) \geq \exp\left( -M_{\beta,H} \, n \, \varepsilon^{-\frac{1}{H-\beta}} \right).
\end{equation}
\end{theorem}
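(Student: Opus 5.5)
We reduce the statement to a one-dimensional small-ball lower bound for fractional Brownian motion with constant diffusion, and then apply the Gaussian correlation inequality. Write $Z_t:=\int_0^t\sigma_s\,dB^H_s$, with components $Z^i_t=\int_0^t\sigma^i_s\,dB^{H,i}_s$. Since the norm $\|\cdot\|_\beta$ is the maximum of the componentwise H\"older seminorms and the components $B^{H,i}$ are independent,
\[
\mathbb P(A_\varepsilon)=\prod_{i=1}^n\mathbb P\bigl([Z^i]_\beta<\varepsilon\bigr).
\]
It therefore suffices to prove, for each scalar $i$, a bound of the form $\mathbb P([Z^i]_\beta<\varepsilon)\ge\exp(-M_{\beta,H}\varepsilon^{-1/(H-\beta)})$ with a constant independent of $i$ (or depending only on the bounds $m_i,M_i$ and $\|\sigma^i\|_{C^1}$). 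The product then gives the factor $n$ in the exponent.

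\textbf{Step 1: removing the time dependence of $\sigma^i$.} Integrating by parts in the Young sense gives
\[
Z^i_t=\sigma^i_tB^{H,i}_t-\int_0^t(\sigma^i_s)'B^{H,i}_s\,ds.
\]
We estimate the H\"older seminorm of each term by that of $B^{H,i}$. For the product term, using $B^{H,i}_0=0$,
\[
[\sigma^iB]_\beta\le\|\sigma^i\|_\infty[B]_\beta+[\sigma^i]_\beta\sup|B|\le(\|\sigma^i\|_\infty+\|(\sigma^i)'\|_\infty)[B]_\beta.
\]
The integral term is Lipschitz with constant $\|(\sigma^i)'\|_\infty\sup|B|\le\|(\sigma^i)'\|_\infty[B]_\beta$. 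Hence there is $c_i\ge1$, depending only on $\|\sigma^i\|_{C^1}$, with $[Z^i]_\beta\le c_i[B^{H,i}]_\beta$. It follows that
\[
\mathbb P([Z^i]_\beta<\varepsilon)\ge\mathbb P\bigl([B^{H,i}]_\beta<\varepsilon/c_i\bigr).
\]

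\textbf{Step 2: the scalar fBm H\"older small-ball bound.} We invoke the Kuelbs--Li--Shao estimate \cite{KuelbsLiShao1995}: for $0<\beta<H$,
\[
-\log\mathbb P\bigl([B^H]_\beta<\varepsilon\bigr)\asymp\varepsilon^{-1/(H-\beta)}.
\]
In particular $\mathbb P([B^H]_\beta<\varepsilon)\ge\exp(-C_{\beta,H}\varepsilon^{-1/(H-\beta)})$ for $\varepsilon\le\varepsilon_0$. If this has to be justified rather than quoted, the route is the entropy/Ciesielski-basis approach. Expand $B^H$ in a Schauder-type (or Haar-type) system so that the $\beta$-H\"older norm is comparable to $\sup_{j,k}2^{j(\beta-H)}|\xi_{jk}|$, with approximately Gaussian coefficients $\xi_{jk}$. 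Then apply the Gaussian correlation inequality, i.e.\ the fact that symmetric convex sets are positively correlated, to the slabs $\{|\xi_{jk}|\le\varepsilon2^{j(H-\beta)}\}$. This factorizes the probability of their intersection into a product. The levels with $2^{j(H-\beta)}\lesssim\varepsilon^{-1}$ contribute about $\varepsilon^{-1/(H-\beta)}$ factors, each of size $\gtrsim\varepsilon2^{j(H-\beta)}$, and the sum of the logarithms is $O(\varepsilon^{-1/(H-\beta)})$. The higher levels contribute a convergent product bounded below. Substituting $\varepsilon/c_i$ changes only the constant, since $(\varepsilon/c_i)^{-1/(H-\beta)}=c_i^{1/(H-\beta)}\varepsilon^{-1/(H-\beta)}$. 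For $\varepsilon>\varepsilon_0$ the probability is bounded below by a positive constant, because $\mathbb P([B]_\beta<\varepsilon_0)>0$. Since $\varepsilon^{-1/(H-\beta)}\ge\varepsilon^{-1/(H-\beta)}\wedge$ const, this regime is absorbed by enlarging the constant, provided the bound is read as holding for $\varepsilon\in(0,1]$. For large $\varepsilon$ the right-hand side tends to $1$, so one may need to restrict to $\varepsilon\le1$ or accept a constant that works uniformly. I would state this restriction explicitly.

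\textbf{Main obstacle.} The delicate point is Step 2: getting a lower bound, uniform in $\varepsilon$ and of exactly the exponent $1/(H-\beta)$, for the H\"older (rather than sup) norm. The level-wise coefficients of fBm are not independent, which is precisely why the Gaussian correlation inequality is the tool I would use to decouple them. Taking $M_{\beta,H}:=\max_iC_{\beta,H}c_i^{1/(H-\beta)}$, which depends also on the bounds in Assumption~(A), and multiplying the $n$ scalar bounds completes the proof.
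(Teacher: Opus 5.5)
Your proof is correct. Its core input is the same Kuelbs--Li--Shao estimate the paper cites, but you reduce to it differently.

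\textbf{Comparison with the paper.} The paper gives no real proof. It defers to Kuelbs--Li--Shao and remarks that the argument rests on estimating the variance of $\int_0^\cdot\sigma_s\,dB^H_s$. In other words, it treats $Z^i$ directly as a Gaussian process whose increment variances are comparable to $|t-s|^{2H}$, with constants depending on $\sigma$.

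You instead do two things. First, you factor $\mathbb P(A_\varepsilon)$ exactly over the independent coordinates, which makes the origin of the factor $n$ transparent. Second, you use a pathwise Young integration by parts to get $[Z^i]_\beta\le c_i[B^{H,i}]_\beta$, so the scalar estimate is only ever needed for genuine fBm.

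Your reduction is the cleaner one. The Kuelbs--Li--Shao theorem is formulated for processes with stationary increments, which $Z^i$ does not have. The paper's route therefore tacitly requires checking that the lower-bound half of their argument uses only upper bounds on increment (or second-difference) variances. Your comparison avoids that check entirely. It uses only $\|\sigma^i\|_{C^1}$; the lower bounds $m_i$ play no role in this one-sided estimate. It also makes explicit that $M$ must depend on $\sigma$, not only on $\beta$ and $H$.

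The variance route is more intrinsic in one respect: it depends on $\sigma$ only through an upper bound on increment variances, so it is less tied to the differentiability of $\sigma$ that your integration by parts exploits. Under Assumption~(A), however, nothing is lost by your approach.

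\textbf{The large-$\varepsilon$ caveat is unnecessary.} The bound holds for every $\varepsilon>0$, so you should not restrict to $\varepsilon\le1$. By Fernique's theorem, $q_i(\varepsilon):=\mathbb P([Z^i]_\beta\ge\varepsilon)\le Ce^{-a\varepsilon^2}$. Once $q_i(\varepsilon)\le\frac12$, this gives
$-\log\mathbb P([Z^i]_\beta<\varepsilon)\le 2q_i(\varepsilon)\le C'\varepsilon^{-1/(H-\beta)}$.
The remaining bounded range of $\varepsilon$ is absorbed by enlarging the constant, exactly as you did on $[\varepsilon_0,1]$. Hence $\sup_{\varepsilon>0}\varepsilon^{1/(H-\beta)}\bigl(-\log\mathbb P(A_\varepsilon)\bigr)<\infty$, which is the statement as written.

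\textbf{Minor points in your optional Ciesielski sketch.}
\emph{Tail levels.} Level $j$ carries $2^j$ slabs, so the levels beyond the cutoff contribute a factor $\exp(-O(\varepsilon^{-1/(H-\beta)}))$, not one bounded below uniformly in $\varepsilon$. This does not change the exponent.
\emph{Decoupling tool.} For symmetric slabs the Khatri--\v{S}id\'ak inequality already suffices, so the full Gaussian correlation inequality is not needed there.
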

The proof of this theorem relies primarily on estimating the variance of $\int_0^t \sigma_s \, dB^H_s$. Since $\sigma_s$ is deterministic, the upper and lower bounds of this estimate depend solely on the time interval and the regularity of $\sigma_s$. For more general stochastic processes, however, additional terms arise due to randomness (see \cite{MaayanMayerWolf2018}).

The technical cornerstone for proving the above estimates is the GCI. Long a celebrated conjecture in Gaussian geometry, it was finally settled by Royen \cite{royen2014convex} using a remarkably elegant approach. Its validity in general Gaussian spaces has been further elucidated by Lata{\l}a and Matlak \cite{latala2017royen}.

\begin{theorem}[Gaussian Correlation Inequality]
Let $\mu$ be a centered Radon Gaussian measure on a separable Banach space $E$. For any two $\mu$-measurable, symmetric, and convex sets $C_1, C_2 \subseteq E$, the following inequality holds:
\begin{equation} \label{GCI}
\mu(C_1 \cap C_2) \geq \mu(C_1) \mu(C_2).
\end{equation}
\end{theorem}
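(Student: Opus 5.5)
The plan is to reduce the statement to the finite-dimensional Gaussian correlation inequality, which is Royen's theorem, and then pass to the limit in $E$.

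\textbf{Finite-dimensional case.} First I treat $E=\mathbb{R}^d$ with $\mu$ a centered Gaussian measure. Writing $\mu$ as the image of the standard Gaussian measure under a linear map preserves symmetry and convexity of sets, so it suffices to consider standard Gaussians. By inner regularity, and since closed symmetric convex sets are countable intersections of symmetric slabs $\{|\langle x,a_k\rangle|\le 1\}$, the inequality reduces to $\mu(\bigcap_{k\le N}S_k\cap\bigcap_{k\le M}T_k)\ge\mu(\bigcap S_k)\mu(\bigcap T_k)$ for finitely many slabs. This is the statement $\mathbb{P}(\max_{i\le N+M}|X_i|/c_i\le1)\ge\mathbb{P}(\max_{i\le N}\cdot)\,\mathbb{P}(\max_{i>N}\cdot)$ for a Gaussian vector $X$, which is exactly Royen's inequality. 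I would cite \cite{royen2014convex} for its proof. Its mechanism interpolates the covariance $\Sigma(\tau)$ between the block-diagonal and full matrices and shows that the derivative of the gamma-type distribution is nonnegative. For non-closed measurable sets I would approximate from inside by compact, hence closed, symmetric convex sets, using the symmetrization $K\mapsto\mathrm{conv}(K\cup -K)$ inside a symmetric convex set.

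\textbf{Infinite-dimensional step.} Since $E$ is separable and $\mu$ is Radon, I choose a sequence $(\xi_k)\subset E^*$ that separates points and is orthonormal in $L^2(\mu)$. Let $P_N x=(\xi_1(x),\dots,\xi_N(x))$. By regularity it suffices to treat closed symmetric convex $C_1,C_2$, since $\mu(C_i)$ can be approximated from inside by compact symmetric convex subsets. For a closed convex $C$, Hahn--Banach together with separability writes $C=\bigcap_{N}P_N^{-1}(\overline{P_N C})$, possibly after enlarging $(\xi_k)$ to include countably many separating functionals. The cylinder sets $C^N:=P_N^{-1}(\overline{P_N C})$ are symmetric, convex and decreasing. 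Applying the finite-dimensional inequality to the image measure $\mu\circ P_N^{-1}$ gives $\mu(C_1^N\cap C_2^N)\ge\mu(C_1^N)\mu(C_2^N)$. Letting $N\to\infty$ and using continuity of measure along decreasing sequences yields the claim.

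\textbf{Main obstacle.} The delicate point is the representation of a closed symmetric convex set as a decreasing intersection of finite-dimensional cylinders. The issue is that $\overline{P_N C}$ may be strictly larger than needed, so one must check that the intersection returns exactly $C$. I would first handle compact $C$, where $P_N C$ is already closed and the separation argument is clean. I would then use Radon inner regularity to reduce $\mu$-measurable sets to compact ones: for given $\delta$, pick compact symmetric convex $K_i\subset C_i$ with $\mu(C_i\setminus K_i)<\delta$, apply the inequality to $K_1,K_2$, and let $\delta\to0$. The existence of such $K_i$ relies on $\mathrm{conv}(K\cup-K)$ being compact in a Banach space (Mazur) and contained in $C_i$. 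For this reduction I follow \cite{latala2017royen}.
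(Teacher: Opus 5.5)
\textbf{Gap in the final reduction.} There is a genuine gap in your last step, from $\mu$-measurable sets to compact ones. Mazur's theorem gives compactness of the \emph{closed} convex hull $\overline{\mathrm{conv}}(K\cup -K)$. It does not give compactness of $\mathrm{conv}(K\cup -K)$, which in infinite dimensions is generally not even closed. For example, in $\ell^2$ with $K=\{0\}\cup\{e_n/n:n\ge1\}$, the point $\sum_n 2^{-n}e_n/n$ lies in $\overline{\mathrm{conv}}(K)\setminus\mathrm{conv}(K)$.

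The closed hull is contained in $C_i$ only when $C_i$ is closed. So for a merely $\mu$-measurable symmetric convex $C_i$, you have not produced a compact symmetric convex $K_i\subset C_i$ with $\mu(C_i\setminus K_i)<\delta$. The finite-dimensional shortcut is also unavailable: there, convex boundaries are null, so one may pass to closures. In infinite dimensions the Cameron--Martin space of $\mu$ is a symmetric convex Borel set of measure $0$ whose closure has measure $1$. As written, your argument therefore proves the inequality only for closed $C_1,C_2$.

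The paper's remark settles this same step with the phrase ``by inner regularity'' and gives no more than you do, so the gap is shared. Closing it needs an additional idea. One option: condition on the first $N$ coordinates of a Karhunen--Lo\`eve expansion of $\mu$, use Borell's log-concavity to see that the conditional probabilities of $C_i$ are symmetric functions of those coordinates with convex superlevel sets, apply the layer-cake form of the finite-dimensional inequality, and let $N\to\infty$ by martingale convergence. The other option is to restrict the statement to closed sets.

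\textbf{Comparison of routes for closed sets.} Once $\mathrm{conv}$ is replaced by $\overline{\mathrm{conv}}$, your argument is valid for closed sets, and it differs from the paper's. You fix one point-separating sequence and prove $C=\bigcap_N P_N^{-1}(P_NC)$ for compact $C$ by a subsequence argument. You then reach closed sets through Radon inner regularity and Mazur. The paper instead writes each closed symmetric convex $C_k$ as a countable intersection of symmetric strips $\{|l_i|\le c_i\}$, with functionals adapted to $C_k$ via Hahn--Banach and separability. It truncates to finitely many strips and lets $n\to\infty$ by continuity from above. This needs no compactness and treats unbounded closed sets directly.

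Your intermediate claim $C=\bigcap_N P_N^{-1}(\overline{P_NC})$ is false for a fixed separating sequence. In $C[0,1]$, take $\xi_k$ to be evaluations at the rationals and $C=\{f:|\int_0^1 f\,dt|\le 1\}$; then $P_NC=\mathbb{R}^N$ for every $N$. Enlarging $(\xi_k)$ by the Hahn--Banach functionals of $C_1$ and $C_2$, as you mention in passing, is exactly the paper's device, and with it the detour through compact sets is unnecessary. Finally, requiring $(\xi_k)$ to be orthonormal in $L^2(\mu)$ serves no purpose, since your finite-dimensional step already covers arbitrary centered Gaussians.
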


\begin{remark}
While the Gaussian Correlation Inequality \eqref{GCI} was originally proved by Royen \cite{royen2014convex} (and further clarified by Lata{\l}a and Matlak \cite{latala2017royen}) for standard Gaussian measures on finite-dimensional Euclidean spaces $\mathbb{R}^d$, its extension to any separable Banach space $E$ equipped with a centered Radon Gaussian measure $\mu$ is a standard consequence of measure-theoretic approximation.

Specifically, by the  Hahn-Banach theorem, any closed, symmetric, and convex set $C \subseteq E$ is weakly closed and can be represented as the countable intersection of symmetric strips:
\[
    C = \bigcap_{i=1}^\infty \left\{ \omega \in E : |l_i(\omega)| \leq c_i \right\},
\]
where $\{l_i\}_{i=1}^\infty \subset E^*$ is a countable separating family of continuous linear functionals and $c_i > 0$. For any integer $n \geq 1$, define the finite-dimensional cylindrical sets
\[
    C^{(n)}_1 = \bigcap_{i=1}^n \left\{ \omega \in E : |l^{(1)}_i(\omega)| \leq c^{(1)}_i \right\} \quad \text{and} \quad C^{(n)}_2 = \bigcap_{i=1}^n \left\{ \omega \in E : |l^{(2)}_i(\omega)| \leq c^{(2)}_i \right\}.
\]
Since $C^{(n)}_1$ and $C^{(n)}_2$ depend only on the finite-dimensional projections of the Gaussian measure $\mu$, the finite-dimensional GCI is directly applicable, yielding $\mu\big(C^{(n)}_1 \cap C^{(n)}_2\big) \geq \mu\big(C^{(n)}_1\big) \mu\big(C^{(n)}_2\big)$.
Because the sequences of sets $\big\{C^{(n)}_1\big\}_{n=1}^\infty$ and $\big\{C^{(n)}_2\big\}_{n=1}^\infty$ are monotonically decreasing with $C_k = \bigcap_{n=1}^\infty C^{(n)}_k$ for $k=1,2$, the continuity of the Radon measure $\mu$ from above guarantees that the inequality passes to the limit as $n \to \infty$. The result for arbitrary $\mu$-measurable symmetric convex sets then follows by inner regularity.
\end{remark}

\begin{remark}
In our context, the $\varepsilon$-tube in the H\"{o}lder space, defined by $A_\varepsilon = \{ \omega : \|\int_0^\cdot \sigma_u dB^H_u(\omega)\|_\beta < \varepsilon \}$, constitutes a symmetric convex set. 
\end{remark}

\begin{corollary} \label{gauss set}
Let $(\Omega, \mathcal{F}, \mathbb{P})$ be an abstract Wiener space (where $\Omega$ is a separable Banach space and $\mathbb{P}$ is the centered standard Gaussian measure on $\Omega$). 
Let $A \subseteq \Omega$ be a $\mu$-measurable, symmetric, and convex set, and let $f: \Omega \to [0, \infty)$ be a $\mathbb{P}$-measurable, symmetric, and convex function. Then for every $t > 0$, the following estimate holds:
\begin{equation*}
    \mathbb{P}\bigl(\{x \in \Omega : f(x) \le t\} \cap A\bigr)
    \ge
    \mathbb{P}\bigl(\{x \in \Omega: f(x) \le t\}\bigr) \mathbb{P}(A).
\end{equation*}
\end{corollary}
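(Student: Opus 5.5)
The corollary should follow directly from the Gaussian Correlation Inequality \eqref{GCI} by taking
\[
C_1:=\{x\in\Omega: f(x)\le t\},\qquad C_2:=A .
\]
The only work is to check that $C_1$ satisfies the hypotheses of that theorem. Here $\mathbb P$ is a centered Radon Gaussian measure on the separable Banach space $\Omega$, so the ambient setting matches the theorem, read together with the remark extending it from $\mathbb R^d$ to separable Banach spaces. I read the phrase ``$\mu$-measurable'' for $A$ as $\mathbb P$-measurable.

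The verification of the sublevel set takes three steps.

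\emph{Measurability.} Since $f$ is $\mathbb P$-measurable, $C_1=f^{-1}([0,t])$ is $\mathbb P$-measurable.

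\emph{Symmetry.} If $f(x)\le t$, then $f(-x)=f(x)\le t$, so $C_1=-C_1$.

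\emph{Convexity.} If $x,y\in C_1$ and $\lambda\in[0,1]$, convexity of $f$ gives
\[
f(\lambda x+(1-\lambda)y)\le \lambda f(x)+(1-\lambda)f(y)\le t .
\]
This is the elementary fact that sublevel sets of convex functions are convex. For $t>0$ the set $C_1$ is nonempty, though nonemptiness is not even needed, since the inequality is trivial when $\mathbb P(C_1)=0$.

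With both sets measurable, symmetric and convex, \eqref{GCI} yields $\mathbb P(C_1\cap A)\ge \mathbb P(C_1)\,\mathbb P(A)$, which is exactly the claim.

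The only potentially delicate point is how the extended GCI handles sets that are merely measurable rather than closed. The preceding remark relies on inner regularity for this. If one prefers to avoid that, I would approximate $C_1$ and $A$ from inside by closed symmetric convex sets; for instance, when $f$ is lower semicontinuous, as in our application where $f$ is a H\"older-type seminorm, $C_1$ is already closed. I would then apply the closed-set version and pass to the limit. I do not expect any real obstacle here, because the statement is a direct specialization of the theorem.
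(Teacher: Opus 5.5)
Your proposal is correct and is essentially the paper's own proof: the paper likewise observes that the sublevel set $\{f\le t\}$ is measurable, symmetric and convex, and then applies \eqref{GCI} directly to it and $A$. One caution concerns your optional fallback: inner approximation of a merely measurable convex set by closed symmetric convex subsets is not automatic in infinite dimensions, since the closed convex hull of a compact subset of a non-closed convex set can leave that set; however, neither you nor the paper needs this step, because both rely on the extended GCI as stated.
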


\begin{proof}
By the assumptions on $f$, the lower level set $C_t := \{x \in \Omega : f(x) \le t\}$ is $\mu$-measurable, symmetric, and convex in $\Omega$ for any given $t > 0$. Since $\Omega$ is a separable Banach space and $\mathbb{P}$ is a centered Gaussian measure on $\Omega$, we can directly apply the Gaussian Correlation Inequality \eqref{GCI} to the sets $C_t$ and $A$. This immediately yields
\begin{equation*}
    \mathbb{P}(C_t \cap A) \ge \mathbb{P}(C_t) \mathbb{P}(A),
    \end{equation*}
which completes the proof.
\end{proof}

\begin{remark}
It is worth noting that in the absence of a general proof for \eqref{GCI} at the time, \cite[Corollary 4.6.3]{Bogachev1998} established the above inequality only for the specific case where the level sets are defined by linear functionals. The breakthrough by Royen \cite{royen2014convex} allows us to extend this result to the general convex setting presented here.
\end{remark}

\begin{lemma}[Layer Cake Representation]\label{tail}
Let $Y$ be a non-negative random variable. Then
\begin{equation*}
    \mathbb{E}[Y] = \int_0^\infty \mathbb{P}(Y > s) \, ds.
\end{equation*}
\end{lemma}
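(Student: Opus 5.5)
The layer cake formula is a direct consequence of Tonelli's theorem, and the plan is simply to write $Y$ as the integral of an indicator and swap the order of integration.

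First, for each $\omega\in\Omega$ we use the elementary identity
\[
    Y(\omega)=\int_0^{Y(\omega)} ds=\int_0^\infty \mathbf 1_{\{s<Y(\omega)\}}\,ds,
\]
which holds because $Y(\omega)\ge 0$. It remains valid when $Y(\omega)=+\infty$, since both sides are then infinite.

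Next, we check joint measurability. The map $(\omega,s)\mapsto \mathbf 1_{\{Y(\omega)>s\}}$ is the indicator of the set $\{(\omega,s): Y(\omega)-s>0\}$. This set is $\mathcal F\otimes\mathcal B([0,\infty))$-measurable, because $(\omega,s)\mapsto Y(\omega)-s$ is a difference of measurable functions on the product space. The integrand is also non-negative.

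We then take the expectation of both sides of the identity and apply Tonelli's theorem for non-negative integrands on $\Omega\times[0,\infty)$ with the product measure $\mathbb P\otimes ds$. Since both measures are $\sigma$-finite, the order of integration may be exchanged:
\[
    \mathbb E[Y]=\int_\Omega\int_0^\infty \mathbf 1_{\{Y>s\}}\,ds\,d\mathbb P
    =\int_0^\infty \mathbb P(Y>s)\,ds .
\]
The equality holds in $[0,\infty]$, so no integrability assumption on $Y$ is needed.

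The only point requiring any care is the joint measurability and the non-negativity needed for Tonelli. Both are immediate here, so there is no genuine obstacle. In the later application, the formula will be used with $\mathbb P$ replaced by the conditional measure $\mathbb P(\cdot\mid A_\varepsilon)$, and the same argument applies verbatim to any probability measure.
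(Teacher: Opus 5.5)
Your argument is correct. Writing $Y=\int_0^\infty \mathbf{1}_{\{Y>s\}}\,ds$, checking joint measurability, and exchanging the integrals by Tonelli is the standard proof; the paper itself quotes this lemma as a known fact without proof. Your closing remark also matches how the lemma is used: in the proof of Theorem~\ref{conditional-expectation} it is applied under the conditional probability $\mathbb{P}(\cdot\mid A)$, which is again a probability measure, so your argument covers that case as well.
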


The following theorem demonstrates that under the symmetry and convexity assumptions, the conditional expectation over a symmetric convex set is dominated by the unconditional expectation.

\begin{theorem}\label{conditional-expectation}
Let $f$ be a non-negative, convex, symmetric, and measurable function on the Wiener space $(\Omega, \mathcal{F}, \mathbb{P})$. Let $A \in \mathcal{F}$ be a symmetric convex set with $\mathbb{P}(A) > 0$. Then the following inequality holds:
\begin{equation*}
    \mathbb{E}\bigl[f \mid A\bigr] \le \mathbb{E}\bigl[f\bigr].
\end{equation*}
\end{theorem}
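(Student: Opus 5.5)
The plan is to combine the layer cake representation (Lemma~\ref{tail}) with Corollary~\ref{gauss set}. The key observation is that the level sets of $f$ are exactly the kind of sets to which the Gaussian correlation inequality applies.

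\textbf{Step 1: reduce to a tail comparison.} Apply Lemma~\ref{tail} to the non-negative random variable $f$, once under the conditional measure $\mathbb{P}(\,\cdot\mid A)$ and once under $\mathbb{P}$:
\[
\mathbb{E}[f\mid A]=\int_0^\infty \mathbb{P}(f>s\mid A)\,ds,
\qquad
\mathbb{E}[f]=\int_0^\infty \mathbb{P}(f>s)\,ds .
\]
It then suffices to prove, for every $s>0$, the pointwise tail inequality
\[
\mathbb{P}(f>s\mid A)\le \mathbb{P}(f>s).
\]
Integrating in $s$ gives the claim, including the case where the right-hand side is $+\infty$.

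\textbf{Step 2: pass to complements.} Write $C_s:=\{f\le s\}$. The tail inequality is equivalent to $\mathbb{P}(C_s\mid A)\ge \mathbb{P}(C_s)$, which in turn is equivalent to
\[
\mathbb{P}(C_s\cap A)\ge \mathbb{P}(C_s)\,\mathbb{P}(A).
\]
This is exactly the conclusion of Corollary~\ref{gauss set} with $t=s$.

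\textbf{Step 3: check the hypotheses.} Since $f$ is convex, $C_s$ is convex. Since $f$ is symmetric, i.e.\ $f(-x)=f(x)$, $C_s$ is symmetric. Since $f$ is measurable, $C_s$ is measurable. The set $A$ is symmetric and convex by assumption. Note that $\mathbb{P}(A)>0$ is needed only to make the conditional expectation meaningful.

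\textbf{Main obstacle.} The only delicate point is the measure-theoretic setting in which Corollary~\ref{gauss set} and the GCI are used. The GCI as stated requires a centered Radon Gaussian measure on a separable Banach space, together with sets that are measurable (possibly only with respect to the completion) symmetric convex sets. Here $\Omega$ is the Wiener space, or the canonical fBm path space, carrying the centered Gaussian measure $\mathbb{P}$. I would invoke the approximation argument from the remark after the GCI: approximate $C_s$ and $A$ from inside by closed symmetric convex sets via inner regularity, and write closed symmetric convex sets as countable intersections of symmetric strips. If $f$ is merely measurable rather than lower semicontinuous, $C_s$ need not be closed. In that case I would rely on inner regularity of the Radon measure applied to the convex set itself; if that fails, I would replace $f$ by its lower semicontinuous regularization, which agrees $\mathbb{P}$-a.s.\ in the cases of interest, such as $f=|\int h\,dW|$ or exponentials of such functionals. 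Apart from this point, the argument is a direct two-line combination of the two earlier results.
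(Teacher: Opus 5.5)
Your proposal is correct and follows the paper's proof essentially step for step: the paper likewise applies Corollary~\ref{gauss set} to the sublevel sets $S_t=\{f\le t\}$ and $A$, passes to complements to get $\mathbb{P}(f>t\mid A)\le \mathbb{P}(f>t)$, and integrates in $t$ via Lemma~\ref{tail}. Your measurability discussion goes beyond the paper, which simply takes Corollary~\ref{gauss set} to hold for all measurable symmetric convex sets. One caveat on your fallback: replacing $f$ by its lower semicontinuous regularization would not help for $f=|\int h\,dW|$ when this Wiener integral has no continuous version, because that regularization is then identically zero rather than equal to $f$ almost surely.
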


\begin{proof}
For each $t \ge 0$, we consider the level set $S_t := \{\omega \in \Omega : f(\omega) \le t\}$. Since $f$ is assumed to be symmetric and convex, $S_t$ is a symmetric convex measurable set in $\Omega$. Applying Corollary \ref{gauss set} (with $\mu = \mathbb{P}$ and $X = \Omega$), we have
\begin{equation*}
    \mathbb{P}(S_t \cap A) \ge \mathbb{P}(S_t) \mathbb{P}(A).
\end{equation*}
By the properties of the probability measure restricted to the set $A$, we observe that
\begin{align*}
    \mathbb{P}(S_t^c \cap A) &= \mathbb{P}(A) - \mathbb{P}(S_t \cap A) \\
    &\le \mathbb{P}(A) - \mathbb{P}(S_t) \mathbb{P}(A) \\
    &= (1 - \mathbb{P}(S_t)) \mathbb{P}(A) = \mathbb{P}(S_t^c) \mathbb{P}(A).
\end{align*}
Dividing both sides by $\mathbb{P}(A)$ yields the comparison for the conditional tail probabilities:
\begin{equation*}
    \mathbb{P}(f > t \mid A) = \frac{\mathbb{P}(S_t^c \cap A)}{\mathbb{P}(A)} \le \mathbb{P}(S_t^c) = \mathbb{P}(f > t).
\end{equation*}
Integrating the above tail probability with respect to $t$ and invoking Lemma \ref{tail}, we conclude that
\begin{align*}
    \mathbb{E}\bigl[f \mid A \bigr] &= \int_0^\infty \mathbb{P}(f > t \mid A) \, dt \\
    &\le \int_0^\infty \mathbb{P}(f > t) \, dt \\
    &= \mathbb{E}\bigl[f\bigr].
\end{align*}
This completes the proof.
\end{proof}

  \section{ Main Results}
  In this section, we present the  main results. 
  We begin with several preliminary lemmas involving the Girsanov transformation, H\"{o}lder estimates for the drift components, and the regularity of fractional operators.
  
 \subsection{Simplification via Girsanov Transformation}
First, the OM functional in \eqref{omf} can be simplified by applying the Girsanov transformation. Here, we define the underlying product probability space as $\Omega = \Omega_1 \times \dots \times \Omega_n$, where each $\Omega_i$ denotes the Wiener space corresponding to the standard Brownian motion that underlies the $i$-th fractional Brownian motion. The overall filtration $\mathcal{F}_t$ and the $\sigma$-algebra $\mathcal{F}$ are generated by the component filtrations $\mathcal{F}^i_t$ and $\mathcal{F}^i$, respectively. The relevant transformations for the fractional and degenerate cases are discussed in detail in \cite{Nualart} and \cite{liu2024onsager}, respectively. 

For any reference path $\phi$ satisfying $\phi - y_0 \in \mathcal{H}_H$, let us define the processes $\tilde{Y}$ and $\tilde{X}$ by
\begin{equation} \label{trans}
\tilde{Y}_t = \phi_t + \int_0^t \sigma_s \, dB^H_s, \quad \tilde{X}_t = x_0 + \int_0^t a_s(\tilde{X}_s, \tilde{Y}_s) \, ds.
\end{equation}
We introduce the drift process $u_s$ defined as
\begin{equation}
u_s = \dot{\phi}_s - \left(K_H^\sigma\right)^{-1} \left( \int_0^{\cdot} b_u(\tilde{X}_u, \tilde{Y}_u) \, du \right)(s),
\end{equation}
and consider the shifted standard Brownian motion
\begin{equation}
\tilde{W}_t = W_t + \int_0^t u_s \, ds.
\end{equation}
Under the assumptions stipulated in this paper, the Novikov condition is satisfied (for a detailed proof, the reader is referred to \cite{NUALART2002103}). Consequently, by Girsanov's theorem, there exists a probability measure $\tilde{\mathbb{P}}$ that is absolutely continuous with respect to $\mathbb{P}$, under which $\tilde{W}_t$ is a standard Brownian motion. The corresponding fractional Brownian motion associated with $\tilde{W}_t$ is given by
\begin{equation*}
\tilde{B}^H_t = B^H_t + \int_0^t \sigma_s^{-1} \, d\phi_s - \int_0^t \sigma_s^{-1} b_s(\tilde{X}_s, \tilde{Y}_s) \, ds.
\end{equation*}
Transforming to the differential form under the new measure $\tilde{\mathbb{P}}$, the coupled dynamics of $(\tilde{X}, \tilde{Y})$ become
\begin{equation} \label{firt}
\begin{cases}
d\tilde{X}_t = a_t(\tilde{X}_t, \tilde{Y}_t)\, dt, \\
d\tilde{Y}_t = b_t(\tilde{X}_t, \tilde{Y}_t) \, dt + \sigma_t \, d\tilde{B}^H_t.
\end{cases}
\end{equation}
This implies that $(\tilde{X}, \tilde{Y}, \tilde{B}^H)$ is the unique strong solution to \eqref{fir} under $\tilde{\mathbb{P}}$.

Consequently, the small-ball probability ratio appearing in the OM functional can be evaluated by leveraging the Radon--Nikodym derivative $\frac{d\tilde{\mathbb{P}}}{d\mathbb{P}}$:
\begin{equation}\label{simplification}
\begin{aligned}
&\frac{\mathbb{P}\bigl(\|Y-\phi\|_{\beta}\le \varepsilon\bigr)}
{\mathbb{P}\!\left( \left\|\int_0^\cdot \sigma_u\, dB_u^H\right\|_{\beta}\le \varepsilon\right)} \\
&\frac{\tilde{\mathbb{P}}\bigl(\|\tilde{Y}-\phi\|_{\beta}\le \varepsilon\bigr)}
{\mathbb{P}\!\left( \left\|\int_0^\cdot \sigma_u\, dB_u^H\right\|_{\beta}\le \varepsilon\right)} \\
&= \frac{\tilde{\mathbb{P}}\!\left( \left\|\int_0^\cdot \sigma_u\, dB_u^H\right\|_{\beta}\le \varepsilon \right)}
{\mathbb{P}\!\left(\left\|\int_0^\cdot \sigma_u\, dB_u^H\right\|_{\beta}\le \varepsilon\right)} \\
&= \frac{\mathbb{E}\!\left[ \exp\!\left( -\int_0^1 u_s \, dW_s - \frac12\int_0^1 |u_s|^2\, ds \right) \mathbf{1}_{\left\{\left\|\int_0^\cdot \sigma_u\, dB_u^H\right\|_{\beta}\le \varepsilon\right\}} \right]}
{\mathbb{P}\!\left(\left\|\int_0^\cdot \sigma_u\, dB_u^H\right\|_{\beta}\le \varepsilon\right)} \\
&= \mathbb{E}\!\left[
\exp\!\left(
-\int_0^1 u_s \, dW_s
-\frac12\int_0^1 |u_s|^2\, ds
\right)
\,\middle|\,
\left\|\int_0^\cdot \sigma_u\, dB_u^H\right\|_{\beta}\le \varepsilon
\right].
\end{aligned}
\end{equation}

\subsection{Gronwall Estimates and Fractional Regularity}

We next employ Gronwall's inequality to show that, under the small-ball constraint, the deviation $\|\tilde{X}-\psi\|_{1+\beta}$ is of order $\varepsilon$.

    \begin{lemma}\label{xholder}
Let $\phi-y_0\in \mathcal{H}_H^\sigma([0,1];\mathbb{R}^n)$, and let
$(\psi_t,\phi_t)_{t\in[0,1]}$ be the $\mathbb{R}^m\times\mathbb{R}^n$-valued
reference paths satisfying \eqref{sec}. Let $(\tilde X_t,\tilde Y_t)_{t\in[0,1]}$
be the strong solution to \eqref{trans}. Assume that
\[
    \left\|\int_0^\cdot \sigma_u\,dB_u^H\right\|_\beta \le \varepsilon .
\]
Assume further that $a$ is globally Lipschitz in $(x,y)$ and that its first
spatial derivatives are uniformly $\beta$-H\"older continuous in $(t,x,y)$ on
bounded sets. Then there exists a constant $C>0$, independent of $\varepsilon$,
such that
\begin{equation}\label{grownwall_highdim}
    \|\tilde X-\psi\|_{1+\beta}\le C\varepsilon .
\end{equation}
\end{lemma}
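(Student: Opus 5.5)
Set $e_t:=\tilde X_t-\psi_t$ and $\eta_t:=\tilde Y_t-\phi_t=\int_0^t\sigma_u\,dB_u^H$, so that $e_0=0$, $\eta_0=0$ and $\|\eta\|_\beta\le\varepsilon$ on the small-ball event. Since $\eta$ vanishes at the origin, this also gives $\sup_t|\eta_t|\le\varepsilon$ componentwise. Subtracting \eqref{sec} from \eqref{trans} yields
\[
    e'_t=a_t(\tilde X_t,\tilde Y_t)-a_t(\psi_t,\phi_t).
\]
By the convention $\|e\|_{1+\beta}=\sup_t|e'_t|+[e']_\beta$, I need to bound two quantities: the supremum of $e'$ and the $\beta$-H\"older seminorm of $e'$.

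\emph{Step 1 (sup bound via Gronwall).} The global Lipschitz property of $a$ gives
\[
    |e'_t|\le L_a\bigl(|e_t|+|\eta_t|\bigr)\le L_a\Bigl(\int_0^t|e'_s|\,ds+\varepsilon\Bigr).
\]
Equivalently, $|e_t|\le L_a\int_0^t|e_s|\,ds+L_a\varepsilon$. Gronwall's inequality then gives $\sup_t|e_t|\le L_ae^{L_a}\varepsilon$, and consequently $\sup_t|e'_t|\le C\varepsilon$. All norm-equivalence constants between the Euclidean and componentwise max norms are absorbed into $C$.

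\emph{Step 2 (H\"older seminorm of $e'$).} This is the main obstacle, because a crude Lipschitz bound on $e'_t-e'_s$ only gives something of order $|t-s|^\beta$ times an $O(1)$ constant, not $O(\varepsilon)$. To get the factor $\varepsilon$, I linearize using the mean value theorem. Write
\[
    e'_t=A_t\,e_t+B_t\,\eta_t,
\]
where
\[
    A_t=\int_0^1\nabla_x a_t\bigl(\psi_t+\theta e_t,\ \phi_t+\theta\eta_t\bigr)\,d\theta,
    \qquad
    B_t=\int_0^1\nabla_y a_t\bigl(\psi_t+\theta e_t,\ \phi_t+\theta\eta_t\bigr)\,d\theta .
\]
The arguments of $\nabla a$ stay in a bounded set: $\psi,\phi$ are continuous on $[0,1]$, and $e,\eta$ are $O(\varepsilon)$ with $\varepsilon\le1$ (WLOG). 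Hence $|A_t|,|B_t|\le L_a$. Moreover, the interpolated paths are $\beta$-H\"older. Indeed, $\phi\in C^\beta$ because $\beta<H$ and elements of $\mathcal H_H^\sigma$ are $\beta$-H\"older; $\psi\in C^1$; $e\in C^1$ with bounded derivative; and $[\eta]_\beta\le\varepsilon$. Together with the assumed uniform $\beta$-H\"older continuity of $\nabla a$ in $(t,x,y)$ on bounded sets, this gives $[A]_\beta+[B]_\beta\le C$ with $C$ independent of $\varepsilon$.

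\emph{Step 3 (combine).} Using the product rule for H\"older seminorms, $[fg]_\beta\le[f]_\beta\sup|g|+\sup|f|\,[g]_\beta$, I get
\[
    [e']_\beta\le [A]_\beta\sup|e|+L_a[e]_\beta+[B]_\beta\sup|\eta|+L_a[\eta]_\beta .
\]
From Step 1, $\sup|e|\le C\varepsilon$ and $[e]_\beta\le\sup|e'|\le C\varepsilon$. On the small-ball event, $\sup|\eta|\le\varepsilon$ and $[\eta]_\beta\le\varepsilon$. Therefore every term on the right is $O(\varepsilon)$, so $[e']_\beta\le C\varepsilon$. Adding Step 1 gives $\|\tilde X-\psi\|_{1+\beta}\le C\varepsilon$.

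The only delicate point is ensuring that the constant $C$ does not depend on $\varepsilon$ or on the particular path. This holds because the bounded set containing the arguments of $\nabla a$ is fixed once $\varepsilon\le1$, and it is determined solely by $\phi$, $\psi$ and the constants $L_a$ and $\beta$.
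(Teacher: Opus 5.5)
Your proposal follows the paper's proof essentially step for step: Gronwall for $\sup|e|$ and $\sup|e'|$, the mean-value linearization $e'_t=A_te_t+B_t\eta_t$, then the product estimate for $[\,\cdot\,]_\beta$. The gap is the step where you conclude $[A]_\beta+[B]_\beta\le C$; it does not follow from the stated hypotheses.

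H\"older exponents multiply under composition. Suppose $\nabla a$ is only $\beta$-H\"older in $(x,y)$. For $\phi-y_0\in\mathcal H_H^\sigma$, the path $\phi$ is in general only $\gamma$-H\"older with $\gamma<1$. So $t\mapsto \nabla a_t(\psi_t+\theta e_t,\phi_t+\theta\eta_t)$ inherits only $\beta\gamma$-H\"older regularity from $\phi$. From $\eta$ it inherits only the bound $|\eta_t-\eta_s|^\beta\le\varepsilon^\beta|t-s|^{\beta^2}$. Both are worse than $|t-s|^\beta$, so $[A]_\beta$ may be infinite. The paper's proof makes the same move when it bounds $\|A_x(t,\lambda)-A_x(s,\lambda)\|\le C(|t-s|^\beta+|\phi_t-\phi_s|^\beta+|\mathcal B_t-\mathcal B_s|^\beta+\cdots)\le C|t-s|^\beta$.

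This is not only a gap in the write-up; under the stated hypothesis the estimate itself can fail. Here is a sketch of a counterexample.
\begin{itemize}
\item Take $m=n=1$, $a_t(x,y)=g(y)$ with $g'(y)=\min\{|y-c|^\beta,1\}$, and $y_0=c$.
\item Take $\phi_t=c+(t-t_0)_+^{\gamma}$ with $t_0\in(0,1)$ and $\gamma\in(H,1)$. Then $\phi-c$ is a multiple of $I_{0^+}^{H+1/2}\bigl((\cdot-t_0)_+^{\gamma-H-1/2}\bigr)$, so it lies in $\mathcal H_H$.
\item Take $\eta=\varepsilon\rho$ with $\rho$ smooth, $\rho_0=0$, $[\rho]_\beta\le 1$ and $\rho_{t_0}=\delta_0>0$.
\item Then $e'_t-e'_{t_0}=\int_c^{\phi_t}\bigl(g'(z+\varepsilon\delta_0)-g'(z)\bigr)\,dz+O(\varepsilon|t-t_0|)$. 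The integrand is $\gtrsim\varepsilon^\beta$ as long as $\phi_t-c\le\varepsilon\delta_0/4$.
\item Choosing $t-t_0=(\varepsilon\delta_0/4)^{1/\gamma}$ gives $[e']_\beta\gtrsim\varepsilon^{1+\beta-\beta/\gamma}$. This is not $O(\varepsilon)$, because $\gamma<1$.
\end{itemize}
The lemma is a pathwise statement. By the Cameron--Martin theorem, the same failure occurs on a set of positive probability inside the small ball.

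The fix is to assume that $\nabla_{(x,y)}a$ is Lipschitz in $(x,y)$ on bounded sets (for example, $a$ of class $C^2$ in space, as is assumed for $b$) and $\beta$-H\"older in $t$. Then $|A_t-A_s|\le C\bigl(|t-s|^\beta+|\psi_t-\psi_s|+|e_t-e_s|+|\phi_t-\phi_s|+|\eta_t-\eta_s|\bigr)\le C|t-s|^\beta$, since $\phi\in C^\beta$ and $[\eta]_\beta\le\varepsilon\le 1$. The same bound holds for $B$. With this assumption, your Steps 1--3 go through unchanged.
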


\begin{proof}
Set
\[
    Z_t:=\tilde X_t-\psi_t,\qquad
    \mathcal B_t:=\int_0^t \sigma_u\,dB_u^H .
\]
Then $\tilde Y_t-\phi_t=\mathcal B_t$ and $Z_0=0$. By the norm convention on
$C^{1+\beta}_0([0,1];\mathbb R^m)$, it suffices to prove
\[
    \sup_{t\in[0,1]} |Z_t|\le C\varepsilon
\]
and
\[
    [Z']_\beta\le C\varepsilon .
\]

We first prove the uniform estimate. From the equations for $\tilde X$ and
$\psi$, we have
\[
    Z_t'
    =
    a_t(\tilde X_t,\tilde Y_t)-a_t(\psi_t,\phi_t)
    =
    a_t(\psi_t+Z_t,\phi_t+\mathcal B_t)-a_t(\psi_t,\phi_t).
\]
By the global Lipschitz continuity of $a$ in $(x,y)$,
\[
    |Z_t'|
    \le C\bigl(|Z_t|+|\mathcal B_t|\bigr).
\]
Since the path $\mathcal B$ starts from zero and
$\|\mathcal B\|_\beta\le\varepsilon$, we have
\[
    \sup_{t\in[0,1]}|\mathcal B_t|\le C\varepsilon .
\]
Hence
\[
    |Z_t|
    \le C\int_0^t |Z_s|\,ds + C\varepsilon .
\]
Gronwall's inequality gives
\[
    \sup_{t\in[0,1]} |Z_t|\le C\varepsilon .
\]
Substituting this estimate into the bound for $Z_t'$ yields
\[
    \sup_{t\in[0,1]} |Z_t'|\le C\varepsilon .
\]

It remains to estimate the $\beta$-H\"older seminorm of $Z'$. For
$0\le s<t\le 1$, by the integral form of the mean value theorem in
$\mathbb R^m\times\mathbb R^n$, we have
\[
\begin{aligned}
Z_t'
&=
\int_0^1
\Big[
    \nabla_x a_t(\psi_t+\lambda Z_t,\phi_t+\lambda\mathcal B_t)Z_t
    +
    \nabla_y a_t(\psi_t+\lambda Z_t,\phi_t+\lambda\mathcal B_t)\mathcal B_t
\Big]\,d\lambda .
\end{aligned}
\]
Consequently,
\[
\begin{aligned}
Z_t'-Z_s'
&=
\int_0^1
\Big[
    A_x(t,\lambda)Z_t-A_x(s,\lambda)Z_s
\Big]\,d\lambda
\\
&\quad+
\int_0^1
\Big[
    A_y(t,\lambda)\mathcal B_t-A_y(s,\lambda)\mathcal B_s
\Big]\,d\lambda ,
\end{aligned}
\]
where
\[
    A_x(r,\lambda)
    :=
    \nabla_x a_r(\psi_r+\lambda Z_r,\phi_r+\lambda\mathcal B_r),
\]
and
\[
    A_y(r,\lambda)
    :=
    \nabla_y a_r(\psi_r+\lambda Z_r,\phi_r+\lambda\mathcal B_r).
\]

We estimate the $x$-part first. Write
\[
\begin{aligned}
A_x(t,\lambda)Z_t-A_x(s,\lambda)Z_s
&=
A_x(t,\lambda)(Z_t-Z_s)
+
\bigl(A_x(t,\lambda)-A_x(s,\lambda)\bigr)Z_s .
\end{aligned}
\]
Since $\sup_{r\in[0,1]}|Z_r'|\le C\varepsilon$, we have
\[
    |Z_t-Z_s|\le C\varepsilon |t-s|\le C\varepsilon |t-s|^\beta .
\]
Moreover, by the assumed $\beta$-H\"older regularity of $\nabla a$ and by the
regularity of $\psi$, $\phi$, $Z$, and $\mathcal B$, we obtain
\[
\begin{aligned}
\|A_x(t,\lambda)-A_x(s,\lambda)\|
&\le C\Big(
    |t-s|^\beta
    +|\psi_t-\psi_s|^\beta
    +|\phi_t-\phi_s|^\beta
    +|Z_t-Z_s|^\beta
    +|\mathcal B_t-\mathcal B_s|^\beta
\Big)
\\
&\le C|t-s|^\beta .
\end{aligned}
\]
Together with $\sup_{r\in[0,1]}|Z_r|\le C\varepsilon$, this gives
\[
    |A_x(t,\lambda)Z_t-A_x(s,\lambda)Z_s|
    \le C\varepsilon |t-s|^\beta .
\]

The $y$-part is treated in the same way:
\[
\begin{aligned}
A_y(t,\lambda)\mathcal B_t-A_y(s,\lambda)\mathcal B_s
&=
A_y(t,\lambda)(\mathcal B_t-\mathcal B_s)
+
\bigl(A_y(t,\lambda)-A_y(s,\lambda)\bigr)\mathcal B_s .
\end{aligned}
\]
Since $\|\mathcal B\|_\beta\le \varepsilon$, we have
\[
    |\mathcal B_t-\mathcal B_s|
    \le \varepsilon |t-s|^\beta,
    \qquad
    \sup_{r\in[0,1]}|\mathcal B_r|\le C\varepsilon .
\]
Using again the $\beta$-H\"older regularity of $\nabla a$, we obtain
\[
    |A_y(t,\lambda)\mathcal B_t-A_y(s,\lambda)\mathcal B_s|
    \le C\varepsilon |t-s|^\beta .
\]
Combining the preceding estimates and integrating over $\lambda\in[0,1]$, we
conclude that
\[
    |Z_t'-Z_s'|
    \le C\varepsilon |t-s|^\beta .
\]
Therefore
\[
    [Z']_\beta\le C\varepsilon .
\]
Together with the uniform bound on $Z'$, this proves
\[
    \|\tilde X-\psi\|_{1+\beta}
    =
    \|Z\|_{1+\beta}
    \le C\varepsilon .
\]
The proof is complete.
\end{proof}

    \begin{lemma}\label{regular-c1}
Let $H>\frac12$ and set $\alpha=H-\frac12\in(0,\frac12)$. Let
$f\in C^{1+\beta}_0([0,1];\mathbb R^d)$ with
\[
    \alpha<\beta<\alpha+\frac14 .
\]
Then the mapping
\[
    s\mapsto s^{2\alpha}D_{0^+}^{\alpha}(s^{-\alpha}f_s)
\]
belongs to $C^1([0,1];\mathbb R^d)$. Moreover, there exists a constant $C>0$
such that
\begin{equation}\label{eq:reg_c1_bound_vector}
    \left\|
        s^{2\alpha}D_{0^+}^{\alpha}(s^{-\alpha}f_s)
    \right\|_{C^1([0,1];\mathbb R^d)}
    \le C\|f\|_{1+\beta}.
\end{equation}
\end{lemma}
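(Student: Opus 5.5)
The plan is to reduce the statement to an explicit integral formula in which the singular powers of $s$ cancel exactly, and then to differentiate under the integral sign. Write $h_s:=s^{-\alpha}f_s$. Since $f\in C^{1+\beta}_0$, we have $f_s=s\int_0^1 f'(s\theta)\,d\theta$, so $h_s=s^{1-\alpha}F(s)$ with $F$ bounded and $\beta$-H\"older. In particular $h$ is H\"older of order $1-\alpha>\alpha$ near $0$ and $C^1$ away from $0$. By the remark after \eqref{Weyl} (Decreusefond--\"Ust\"unel), $D^\alpha_{0^+}h$ then exists and is given by the Marchaud formula \eqref{Weyl}. This is applied componentwise, so it suffices to treat $d=1$.

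\textbf{Step 1 (scaling).} In \eqref{Weyl}, substitute $y=xt$ with $t\in(0,1)$. This gives $h(x)-h(xt)=x^{-\alpha}\bigl(f(x)-t^{-\alpha}f(xt)\bigr)$ and $(x-y)^{-1-\alpha}dy=x^{-\alpha}(1-t)^{-1-\alpha}dt$. Hence
\[
    x^{2\alpha}D^\alpha_{0^+}(s^{-\alpha}f_s)(x)=\frac{1}{\Gamma(1-\alpha)}\Bigl(f(x)+\alpha\int_0^1\frac{f(x)-t^{-\alpha}f(xt)}{(1-t)^{1+\alpha}}\,dt\Bigr)=:g(x).
\]
So the weight $s^{2\alpha}$ exactly absorbs the singular factors. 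Next split the numerator as
\[
    f(x)-t^{-\alpha}f(xt)=\bigl(f(x)-f(xt)\bigr)+(1-t^{-\alpha})f(xt).
\]

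\textbf{Step 2 (integrability and bounds on $g$).} For the first piece, $|f(x)-f(xt)|\le x(1-t)\|f'\|_\infty$, which kills the singularity at $t=1$. For the second piece, $|1-t^{-\alpha}|\lesssim(1-t)$ near $t=1$, while near $t=0$ we have $|t^{-\alpha}f(xt)|\le t^{1-\alpha}x\|f'\|_\infty$. Thus $g$ is well defined, bounded by $C\|f\|_{1+\beta}$, and continuous on $[0,1]$ by dominated convergence.

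\textbf{Step 3 (derivative).} Formally differentiate in $x$ under the integral. The first piece gives $f'(x)-tf'(xt)=\bigl(f'(x)-f'(xt)\bigr)+(1-t)f'(xt)$, which is bounded by $[f']_\beta(1-t)^\beta+(1-t)\|f'\|_\infty$. Divided by $(1-t)^{1+\alpha}$, this is integrable precisely because $\beta>\alpha$. This is the key point and the only place the hypothesis $\beta>\alpha$ is used; the upper bound $\beta<\alpha+\tfrac14$ plays no role here. The second piece gives $(1-t^{-\alpha})t\,f'(xt)(1-t)^{-1-\alpha}$, which is integrable uniformly in $x$ since $t^{1-\alpha}$ is bounded near $0$ and $(1-t^{-\alpha})\sim -\alpha(1-t)$ near $1$. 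Both derivative integrands have $x$-independent integrable majorants of size $C\|f\|_{1+\beta}$, and they are continuous in $x$ for each fixed $t$. Therefore the standard theorem on differentiation under the integral sign yields $g\in C^1([0,1])$, including at $x=0$ where $g'(0)$ is obtained by the same formula. It also yields $\sup|g'|\le C\|f\|_{1+\beta}$, and together with Step 2 this gives \eqref{eq:reg_c1_bound_vector}.

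\textbf{Main obstacle.} The delicate point is justifying Step 1, namely that the Marchaud representation holds pointwise for $h=s^{-\alpha}f$ despite the singular weight at $0$. I would handle it by checking directly that $h\in I^\alpha_{0^+}(L^p)$ for some $p$ with $\alpha p<1$. This follows since $h(x)=x^{1-\alpha}F(x)$ with $F\in C^\beta$ and $1-\alpha>\alpha$. One then notes that the scaled integral converges absolutely for every $x$, so the $L^p$-limit in \eqref{Weyl} coincides with the pointwise expression above.
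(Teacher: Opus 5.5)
Your proposal is correct, and it is more complete than the paper's own proof. The paper proves this lemma only by the componentwise reduction that you also make. It then asserts that ``the scalar argument gives'' $s^{2\alpha}D_{0^+}^{\alpha}(s^{-\alpha}f^i_s)\in C^1([0,1])$ with the bound $C\|f^i\|_{1+\beta}$, but never supplies that scalar argument. Your Steps 1--3 fill exactly this gap:
\begin{enumerate}
\item The substitution $y=xt$ in \eqref{Weyl} turns $x^{2\alpha}D^\alpha_{0^+}(s^{-\alpha}f_s)(x)$ into an integral against the fixed kernel $(1-t)^{-1-\alpha}$.
\item The splitting $f(x)-t^{-\alpha}f(xt)=(f(x)-f(xt))+(1-t^{-\alpha})f(xt)$ makes both pieces absolutely integrable.
\item The majorants you give for the $x$-derivatives are integrable. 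The only delicate one is $[f']_\beta(1-t)^{\beta-1-\alpha}$, which needs $\beta>\alpha$.
\end{enumerate}
Compared with the paper, your version is self-contained and gives explicit formulas for $g$ and $g'$. In particular it yields the continuous extension to $x=0$, where the original expression has to be interpreted, with $g(0)=0$. It also shows that the restriction $\beta<\alpha+\tfrac14$ plays no role in this lemma.

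For the point you flag as the main obstacle, there is a slightly cleaner justification than passing through $I^\alpha_{0^+}(L^p)$ and $L^p$-limits. The function $h=s^{-\alpha}f$ is absolutely continuous on $[0,1]$ with $h(0)=0$ and $|h'(x)|\le(1+\alpha)\|f'\|_\infty x^{-\alpha}$, so $D^\alpha_{0^+}h=I^{1-\alpha}_{0^+}h'$. A single integration by parts in $\int_0^x(x-y)^{-\alpha}h'(y)\,dy$ then gives \eqref{Weyl} pointwise for every $x>0$. The boundary term at $y=x$ vanishes because $h$ is $C^1$ near $x$.
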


\begin{proof}
The fractional derivative is understood componentwise. For each component
$f^i$, $1\le i\le d$, the scalar argument gives
\[
    s\mapsto s^{2\alpha}D_{0^+}^{\alpha}(s^{-\alpha}f_s^i)
    \in C^1([0,1]),
\]
and
\[
    \left\|
        s^{2\alpha}D_{0^+}^{\alpha}(s^{-\alpha}f_s^i)
    \right\|_{C^1}
    \le C\|f^i\|_{1+\beta}.
\]
Taking the maximum over $1\le i\le d$ yields
\[
    \left\|
        s^{2\alpha}D_{0^+}^{\alpha}(s^{-\alpha}f_s)
    \right\|_{C^1([0,1];\mathbb R^d)}
    \le C\|f\|_{1+\beta}.
\]
The proof is complete.
\end{proof}

\begin{lemma}\label{singular-c1}
Let $\frac14<H<\frac12$ and set $\alpha=\frac12-H\in(0,\frac14)$. Let
$f\in C^{1+\beta}_0([0,1];\mathbb R^d)$ with
\[
    0<\beta<H-\frac14 .
\]
Then the mapping
\[
    s\mapsto s^{-2\alpha}I_{0^+}^{\alpha}(s^\alpha f_s)
\]
belongs to $C^1([0,1];\mathbb R^d)$. Moreover, there exists a constant $C>0$
such that
\begin{equation}\label{eq:singular_c1_bound_vector}
    \left\|
        s^{-2\alpha}I_{0^+}^{\alpha}(s^\alpha f_s)
    \right\|_{C^1([0,1];\mathbb R^d)}
    \le C\|f\|_{1+\beta}.
\end{equation}
\end{lemma}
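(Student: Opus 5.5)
The plan is to reduce to the scalar case componentwise, exactly as in the proof of Lemma~\ref{regular-c1}. Taking the maximum over $1\le i\le d$ at the end then gives \eqref{eq:singular_c1_bound_vector}. So fix a scalar $f\in C^{1+\beta}_0([0,1])$ and set
\[
    g(s):=s^{-2\alpha}I_{0^+}^{\alpha}(s^\alpha f_s)(s).
\]
The key idea is that the singular prefactor $s^{-2\alpha}$ is exactly absorbed by a scaling substitution. In the definition of the Riemann--Liouville integral I will substitute $y=su$ with $u\in[0,1]$:
\[
    I_{0^+}^{\alpha}(y^\alpha f_y)(s)
    =\frac{1}{\Gamma(\alpha)}\int_0^s (s-y)^{\alpha-1}y^\alpha f(y)\,dy
    =\frac{s^{2\alpha}}{\Gamma(\alpha)}\int_0^1 (1-u)^{\alpha-1}u^{\alpha} f(su)\,du .
\]
This yields the representation
\[
    g(s)=\frac{1}{\Gamma(\alpha)}\int_0^1 (1-u)^{\alpha-1}u^{\alpha} f(su)\,du ,
    \qquad s\in[0,1].
\]
It is valid for $s>0$, and it extends continuously to $s=0$ with $g(0)=0$ because $f(0)=0$.

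Next I will differentiate under the integral sign. The integrand is $C^1$ in $s$, with $\partial_s f(su)=uf'(su)$. Its derivative is dominated by $(1-u)^{\alpha-1}u^{\alpha+1}\sup|f'|$, which is integrable on $[0,1]$ since $\alpha>0$. Dominated convergence therefore gives, for all $s\in[0,1]$ including $s=0$,
\[
    g'(s)=\frac{1}{\Gamma(\alpha)}\int_0^1 (1-u)^{\alpha-1}u^{\alpha+1} f'(su)\,du .
\]
Continuity of $g'$ on $[0,1]$ follows from a second application of dominated convergence, using the continuity of $f'$.

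The bounds then follow directly. Since $f(0)=0$, we have $\sup|f|\le\sup|f'|$. Hence
\[
    \sup|g|+\sup|g'|\le \frac{B(\alpha,\alpha+1)+B(\alpha,\alpha+2)}{\Gamma(\alpha)}\sup|f'|\le C\|f\|_{1+\beta},
\]
where $B$ is the Beta function. (In fact the same formula shows that $g'$ inherits the $\beta$-H\"older seminorm of $f'$, since $|f'(tu)-f'(su)|\le [f']_\beta|t-s|^\beta$.)

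The only point needing care is the behaviour at $s=0$, where the factor $s^{-2\alpha}$ is singular. I expect this to be the main, though mild, obstacle. The scaling representation above removes it entirely, so no separate endpoint analysis is needed; the restriction $\beta<H-\tfrac14$ is not used here beyond ensuring $f\in C^{1+\beta}$.
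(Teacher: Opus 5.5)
Your proposal is correct and takes essentially the same route as the paper: the scaling substitution $r=sv$ absorbs $s^{-2\alpha}$, you differentiate under the integral sign by dominated convergence with the integrable kernel $(1-v)^{\alpha-1}v^{\alpha+1}$, and you take the maximum over components. Your extra remarks are accurate refinements of that same argument: the continuous extension to $s=0$, the explicit Beta-function constants, and the observation that the hypothesis $\beta<H-\frac14$ is never used.
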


\begin{proof}
Again, the fractional integral is understood componentwise. For each component
$f^i$, using the definition of the Riemann--Liouville fractional integral and
the change of variables $r=sv$, we have
\[
\begin{aligned}
s^{-2\alpha}I_{0^+}^{\alpha}(s^\alpha f_s^i)
&=
\frac{s^{-2\alpha}}{\Gamma(\alpha)}
\int_0^s (s-r)^{\alpha-1}r^\alpha f_r^i\,dr
\\
&=
\frac{1}{\Gamma(\alpha)}
\int_0^1 (1-v)^{\alpha-1}v^\alpha f_{sv}^i\,dv .
\end{aligned}
\]
Since $f^i\in C^{1+\beta}_0([0,1])$, differentiation under the integral sign is
justified by the dominated convergence theorem, and
\[
\frac{d}{ds}
\left(
    s^{-2\alpha}I_{0^+}^{\alpha}(s^\alpha f_s^i)
\right)
=
\frac{1}{\Gamma(\alpha)}
\int_0^1 (1-v)^{\alpha-1}v^{\alpha+1}(f^i)'_{sv}\,dv .
\]
The kernel $(1-v)^{\alpha-1}v^{\alpha+1}$ is integrable on $[0,1]$, and hence
\[
    \left\|
        s^{-2\alpha}I_{0^+}^{\alpha}(s^\alpha f_s^i)
    \right\|_{C^1}
    \le C\|f^i\|_{1+\beta}.
\]
Taking the maximum over all components gives
\[
    \left\|
        s^{-2\alpha}I_{0^+}^{\alpha}(s^\alpha f_s)
    \right\|_{C^1([0,1];\mathbb R^d)}
    \le C\|f\|_{1+\beta}.
\]
The proof is complete.
\end{proof}

    \subsection{The Onsager-Machlup Functional}
   In the following, we present the precise statement and proof of the theorem corresponding to \eqref{mainresultfunctional}. To unify the presentation across different Hurst regimes, the Onsager--Machlup functional will be expressed in terms of the operator $(K_H^\sigma)^{-1}$.
\begin{theorem}\label{result}
Let $H \in (1/4, 1)$, and let $(X,Y)$ be the solution to the stochastic system \eqref{fir} with coefficients satisfying Assumption \((A)\). For any reference path $\phi-y_0 \in \mathcal{H}^\sigma_H([0,1];\mathbb{R}^n)$ and its associated state $\psi$ satisfying the coupling \eqref{sec} with $\psi_0 = x_0$, the OM functional of $Y$ with respect to the H\"{o}lder norm $\Vert \cdot \Vert_{\beta}$ (with $\max\{H-1/2,0\} < \beta < H-1/4$) is given by
\begin{equation}
    J(\phi) = \frac{1}{2}\int_0^1 \left| (K_H^\sigma)^{-1} \left( \phi_\cdot - y_0 - \int_0^\cdot b_v(\psi_v, \phi_v) \, dv \right)(s) \right|^2 ds + \frac{d_H}{2} \int_0^1 \, \nabla_y \cdot b_s(\psi_s, \phi_s) \, ds, \label{omexpress}
\end{equation}
where the constant $d_H= \sqrt{\frac{2H\Gamma(H+1/2)\Gamma(3/2-H)}{\Gamma(2-2H)}}$.
\end{theorem}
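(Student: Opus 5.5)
Starting from the Girsanov reduction \eqref{simplification}, the ratio equals $\mathbb{E}[\exp(-\int_0^1 u_s\,dW_s-\tfrac12\int_0^1|u_s|^2ds)\,|\,A_\varepsilon]$. I will decompose $u$ by Taylor-expanding $b_s(\tilde X_s,\tilde Y_s)$ around $(\psi_s,\phi_s)$:
\[
b_s(\tilde X_s,\tilde Y_s)=b_s(\psi_s,\phi_s)+\nabla_y b_s(\psi_s,\phi_s)\mathcal B_s+\nabla_x b_s(\psi_s,\phi_s)Z_s+R_s ,
\]
with $Z=\tilde X-\psi$, $\mathcal B=\int_0^\cdot\sigma\,dB^H$ and $|R_s|\le C(|Z_s|^2+|\mathcal B_s|^2)$. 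Writing $u=u^0-u^1-u^2-u^3$ with deterministic $u^0=(K_H^\sigma)^{-1}(\phi-y_0-\int_0^\cdot b(\psi,\phi))$, the quadratic part $\tfrac12\int|u|^2$ equals $\tfrac12\int|u^0|^2$ plus terms which, by Lemma~\ref{xholder}, Lemma~\ref{regular-c1}/\ref{singular-c1} and the bound $\|\mathcal B\|_\beta\le\varepsilon$, are $O(\varepsilon)$ uniformly on $A_\varepsilon$. Indeed, $\int_0^\cdot \nabla_x b\,Z$ lies in $C^{1+\beta}_0$ with norm $\le C\varepsilon$, so its image under $(K_H^\sigma)^{-1}$ is $O(\varepsilon)$ in sup norm. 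The same holds for the remainder, and for the $\mathcal B$-term after integrating by parts (the first-order piece being of order $\varepsilon$ in $L^2$). This leaves the factor $e^{-\frac12\|u^0\|^2}$ together with the stochastic integrals.

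For the stochastic part, $\int u^0\,dW$ is handled by Lemma~\ref{estimate1}, giving limit $1$. The $y$-linear term $\int_0^1 (K_H^\sigma)^{-1}(\int_0^\cdot\nabla_yb_v\mathcal B_v\,dv)(s)\,dW_s$ is a double Wiener integral plus a trace correction. Using $\mathcal B_v=\int_0^v\sigma\,dB^H$, the stochastic Fubini theorem, and the kernel representation of $K_H$, I will rewrite it as $\int\int f(s,t)\,dW_s\,dW_t$ with a symmetric trace-class kernel $f$, plus the diagonal contribution. Lemma~\ref{estimate2} then yields $\exp(-\operatorname{Tr} f)$. Computing the trace via the diagonal of $K_H^\ast$-type kernels gives $\tfrac{d_H}{2}\int_0^1\nabla_y\!\cdot b_s(\psi_s,\phi_s)\,ds$ with the stated $d_H$, exactly as in \cite{Nualart}. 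Only diagonal entries of $\nabla_yb$ contribute because $\sigma$ is diagonal and the components of $W$ are independent.

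The off-diagonal cross terms $i\neq j$ of $\nabla_y b$, the $Z$-term $\int (K_H^\sigma)^{-1}(\int\nabla_xb\,Z)\,dW$, and the remainder term are the genuine obstacles, since their integrands are random and coupled. My approach will be to separate all pieces by Hölder's inequality for conditional expectations. For each problematic piece $V$, I will bound $\mathbb{E}[e^{c|V|}\mid A_\varepsilon]$. When $V$ is a symmetric convex functional of $W$ (for instance $|$bilinear cross integral$|$ written as the absolute value of a sum of products of independent Gaussian integrals, or a convex majorant), Theorem~\ref{conditional-expectation} replaces the conditional expectation by the unconditional one. Rescaling $c\to$ small then shows the limit is $1$.

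For the $Z$-term I will instead use the stochastic integration-by-parts formula (Young integration, Lemma~\ref{young}) to move the integral onto the smooth $C^1$ integrand supplied by Lemma~\ref{regular-c1}/\ref{singular-c1}. Combined with Lemma~\ref{xholder}, this makes it pathwise $O(\varepsilon^{1-\delta})$ on $A_\varepsilon$, after controlling $\|W\|$-type factors via a Hölder-norm small-ball comparison, so it vanishes. I expect the off-diagonal cross terms to be the main difficulty: showing that the relevant functional is convex and symmetric enough to apply the GCI, and that its unconditional exponential moment tends to $1$ when scaled by $\varepsilon$-small coefficients. If direct convexity fails, I would bound the product $\int g\,dW^i\int h\,dW^j$ by $\tfrac12(\delta^{-1}|\int g\,dW^i|^2+\delta|\int h\,dW^j|^2)$, which is convex and symmetric. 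Collecting all limits gives $\exp(-J(\phi))$ with $J$ as in \eqref{omexpress}.
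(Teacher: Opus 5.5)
Your treatment of the off-diagonal terms $i\neq j$ of $\nabla_y b$ has a genuine gap, and these terms are exactly the new multidimensional difficulty. Write such a term as $V=\int_0^1 h_s(\omega_j)\,dW^i_s$, where $h$ is built linearly from $\int_0^\cdot\sigma^j_u\,dB^{H,j}_u$.

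Your device does not apply to $V$. First, $|V|$ is not a convex function of $(\omega_i,\omega_j)$: already $(x,y)\mapsto|xy|$ fails to be convex on $\mathbb{R}^2$. So Theorem~\ref{conditional-expectation} cannot be used on the product space.

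More fundamentally, removing the whole conditioning throws away the only source of smallness:
\begin{itemize}
\item the kernel of $V$ contains no factor $\varepsilon$, and $V$ is small only because $\omega_j\in A_j^\varepsilon$;
\item unlike the $Z$-term, $V$ has no pathwise $O(\varepsilon)$ bound on $A^\varepsilon$. It is a L\'evy-area-type It\^o integral in which both $h$ and $W^i$ are only $C^{1/2-}$, so no Young estimate is available.
\end{itemize}
Hence, after the conditioning is removed, what remains is an $\varepsilon$-independent exponential moment strictly larger than $1$, possibly $+\infty$. ``Rescaling $c$ small'' is not an option: the H\"older splitting needs the limit $1$ for the fixed exponents $c=p>1$ it produces, and for $c<0$ in the reverse direction.

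The AM--GM fallback does not rescue this either. $V$ is a mixed second-chaos element with a (generically) infinite-rank kernel, not a single product of two Gaussian integrals. Even for one product $\xi\eta$, the unconditional factor $\mathbb{E}\bigl[e^{c\delta^{-1}\xi^2/2}\bigr]$ blows up as $\delta\to0$, unless you keep the conditioning that makes the other factor small.

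The missing idea is \emph{partial} conditioning, which is what the paper does.
\begin{enumerate}
\item Because $\sigma$ is diagonal and $\|\cdot\|_\beta$ is a componentwise maximum, $A^\varepsilon=\bigcap_k A_k^\varepsilon$ and $\mathbb{P}$ is a product over coordinates. By Fubini,
$\mathbb{E}[e^{cV}\mid A^\varepsilon]=\mathbb{E}_j\bigl[\mathbb{E}_i[e^{cV}\mid A_i^\varepsilon]\bigm| A_j^\varepsilon\bigr]$.
\item For fixed $\omega_j\in A_j^\varepsilon$, $V$ is a centered Gaussian linear functional of $\omega_i$ with variance $\int_0^1|h_s(\omega_j)|^2\,ds\le C\varepsilon^2$.
\item Since $V$ is odd in $\omega_i$ and $A_i^\varepsilon$ is symmetric, $\mathbb{E}_i[e^{cV}\mid A_i^\varepsilon]=\mathbb{E}_i[\cosh(cV)\mid A_i^\varepsilon]$. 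Applying Theorem~\ref{conditional-expectation} in $\omega_i$ alone bounds this by $e^{c^2C\varepsilon^2/2}$.
\item Jensen's inequality together with symmetry gives the lower bound $1$.
\end{enumerate}
The outer conditioning on $A_j^\varepsilon$ is retained throughout and supplies the smallness.

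Two further points need repair.
\begin{itemize}
\item \textbf{Taylor remainder.} Its stochastic integral is not a convex symmetric functional either. It should be handled as in \cite{Nualart}: use the exponential martingale inequality (the quadratic variation is $O(\varepsilon^4)$ on $A^\varepsilon$) against the small-ball lower bound $\mathbb{P}(A_\varepsilon)\ge\exp(-M_{\beta,H}\,n\,\varepsilon^{-1/(H-\beta)})$. This is exactly where $\beta<H-\tfrac14$ enters, and your plan never uses it.
\item \textbf{$Z$-term.} Your integration by parts matches the paper's, but the pathwise $O(\varepsilon^{1-\delta})$ claim for the factors $\int_0^s u^{\pm\alpha}\,dW_u$ is unsupported. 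A Gaussian-tail versus small-ball comparison cannot give it, since $\beta>\max\{H-\tfrac12,0\}$ forces $1/(H-\beta)>2$. Instead, bound $|D_2|\le C\varepsilon\int_0^1\bigl|\int_0^s u^{\pm\alpha}\,dW_u\bigr|\,ds$ and note that the exponential of the right-hand side is convex and symmetric. Then remove the conditioning via Theorem~\ref{conditional-expectation} and conclude by Fernique's theorem, treating $D_1$ with Lemma~\ref{estimate1'}.
\end{itemize}
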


\begin{remark}
Note that although the OM functional in \eqref{simplification} is formulated in terms of the process $Y$, it effectively characterizes the most probable transition path for the entire coupled system $(X, Y)$. This is due to the fact that $X$ is a functional of $Y$ via the deterministic relation \eqref{firt}. As shown in Lemma \ref{xholder}, the small-ball constraint on $Y$ inherently controls the trajectory of $X$, ensuring that the derived $J(\phi)$ is the appropriate Lagrangian for the joint dynamics.
\end{remark}

\begin{proof}
Since the inverse operator $(K_H^\sigma)^{-1}$ admits three distinct representations according to the value of $H$, namely as a fractional derivative, an ordinary derivative, or a fractional integral, the proof is naturally divided into three cases.

\medskip

\noindent\textbf{Case I: $1/4<H<1/2$.}

According to the explicit representation of the inverse operator $(K_H^\sigma)^{-1}$, it suffices to verify that the OM functional is given by
\begin{equation*}
J(\phi) = \frac12\int_0^1 \left| \dot{\phi}_s - s^{-\alpha}I_{0^+}^{\alpha}\!\left(s^\alpha \sigma_s^{-1}b_s(\psi_s,\phi_s)\right) \right|^2 ds + \frac{d_H}{2} \int_0^1 \nabla_y \cdot b_s(\psi_s,\phi_s) \, ds.
\end{equation*}
For brevity, let $A^\varepsilon_k := \left\{ \left\| \int_0^\cdot \sigma^k_u \, dB^{H,k}_u \right\|_\beta \le \varepsilon \right\}$ denote the small-ball event for the $k$-th coordinate component, and define the joint small-ball event as $A^\varepsilon := \bigcap_{k=1}^d A^\varepsilon_k = \left\{\left\|\int_0^\cdot \sigma_s\,dB_s^H\right\|_\beta\le\varepsilon\right\}$. 
Arguments identical to those in the classical Brownian setting are omitted here; we refer the reader to \cite{Liu2026,Nualart} for extensive details. After invoking Girsanov's theorem, the remaining verification follows a standard scheme.

By Girsanov's theorem, the small-ball probability under the transformed measure satisfies
\begin{align*}
\mathbb{P}\bigl(\|\tilde{Y}-\phi\|_{\beta}\le\varepsilon\bigr)
&= \mathbb{E}\Bigg[ \exp\Bigg( -\int_0^1 u_s \cdot dW_s -\frac12\int_0^1 |u_s|^2\,ds \Bigg) \mathbf{1}_{A^\varepsilon} \Bigg] \\
&= \mathbb{E}\Big( \exp(\mathcal{I}_1+\mathcal{I}_2)\, \mathbf{1}_{A^\varepsilon} \Big),
\end{align*}
where the exponents are partitioned as
\begin{align*}
\mathcal{I}_1 &= -\int_0^1 \left( \dot{\phi}_s - s^{-\alpha}I_{0^+}^{\alpha}\!\left(s^\alpha\sigma_s^{-1}b_s(\tilde X_s,\tilde Y_s)\right) \right) \cdot dW_s,\\
\mathcal{I}_2 &= -\frac12\int_0^1 \left| \dot{\phi}_s - s^{-\alpha}I_{0^+}^{\alpha}\!\left(s^\alpha\sigma_s^{-1}b_s(\tilde X_s,\tilde Y_s)\right) \right|^2 ds.
\end{align*}
By virtue of Lemma~\ref{xholder}, it follows immediately that as the noise path vanishes under the H\"{o}lder topology,
\begin{equation*}
I_{0^+}^{\alpha}\!\left(s^\alpha\sigma_s^{-1}b_s(\tilde X_s,\tilde Y_s)\right) \to I_{0^+}^{\alpha}\!\left(s^\alpha\sigma_s^{-1}b_s(\psi_s,\phi_s)\right), \qquad \text{as } \varepsilon \to 0.
\end{equation*}
Consequently, applying the Lebesgue dominated convergence theorem yields
\begin{align*}
\lim_{\varepsilon\to0} \mathbb{E}\Bigg( \exp(\mathcal{I}_2)\,\Bigg|\, A^\varepsilon \Bigg) = \exp\Bigg( -\frac12\int_0^1 \left| \dot{\phi}_s- s^{-\alpha}I_{0^+}^{\alpha}\!\left(s^\alpha\sigma_s^{-1}b_s(\psi_s,\phi_s)\right) \right|^2 ds \Bigg).
\end{align*}
Moreover, by Lemma~\ref{estimate1} and the coordinate-wise independence of the underlying Brownian motions, the component-wise decoupling holds:
\begin{equation*}
\begin{aligned}
\lim_{\varepsilon\to0} \mathbb{E}\Bigg( \exp\Bigl(-\int_0^1 \dot{\phi}_s \cdot dW_s\Bigr) \,\Bigg|\, A^\varepsilon \Bigg) 
&= \lim_{\varepsilon\to0} \mathbb{E}\Bigg( \prod_{i=1}^d \exp\Bigl(-\int_0^1 \dot{\phi}_s^i\, dW^i_s\Bigr) \,\Bigg|\, \max_{1 \le i \le d} \left\| \int_0^\cdot \sigma^i_u\,dB_u^{H,i}\right\|_\beta\le\varepsilon \Bigg) \\
&= \lim_{\varepsilon\to0} \mathbb{E}\Bigg( \prod_{i=1}^d \exp\Bigl(-\int_0^1 \dot{\phi}_s^i\, dW^i_s\Bigr) \,\Bigg|\, \bigcap_{i=1}^d A^\varepsilon_i \Bigg) \\
&= \lim_{\varepsilon\to0} \prod_{i=1}^d \mathbb{E}\Bigg( \exp\Bigl(-\int_0^1 \dot{\phi}_s^i\, dW^i_s\Bigr) \,\Bigg|\, A^\varepsilon_i \Bigg) \\
&= \prod_{i=1}^d \lim_{\varepsilon\to0} \mathbb{E}\Bigg( \exp\Bigl(-\int_0^1 \dot{\phi}_s^i\, dW^i_s\Bigr) \,\Bigg|\, A^\varepsilon_i \Bigg) = 1.
\end{aligned}
\end{equation*}
Therefore, it remains to analyze the core conditional expectation containing the drift term:
\begin{equation}\label{taylor1}
\mathbb{E}\Bigg( \exp\Bigg( \int_0^1 s^{-\alpha}I_{0^+}^{\alpha}\!\left(s^\alpha\sigma_s^{-1}b_s(\tilde X_s,\tilde Y_s)\right) \cdot dW_s \Bigg) \,\Bigg|\, A^\varepsilon \Bigg).
\end{equation}

We next perform a Taylor expansion on the drift coefficient:
\begin{align*}
b_s(\tilde{X}_s,\tilde{Y}_s) &= b_s(\psi_s,\phi_s) + \nabla_x b_s(\psi_s,\phi_s)(\tilde{X}_s-\psi_s) + \nabla_y b_s(\psi_s,\phi_s)(\tilde{Y}_s-\phi_s) + R_s.
\end{align*}
Substituting this expansion into \eqref{taylor1} reveals that, in contrast to the classical setting, two additional distinct parts emerge. The first is the coupling term originating from the non-degenerate state component, denoted by $\nabla_x b_s(\psi_s,\phi_s)(\tilde{X}_s-\psi_s)$. The second involves cross-component noise coupling terms across different dimensions within $\nabla_y b_s(\psi_s,\phi_s)(\tilde{Y}_s-\phi_s)$, which typically take the form $\int_0^t \sigma^i_s \, dB^{H,i}_s \, dW^j_s$. Fortunately, the GCI-based conditional expectation inequality (Theorem~\ref{conditional-expectation}) provides a powerful and elegant framework to analyze the conditional exponential expectations of both terms.

We only treat the genuinely new terms. First, we consider the state coupling term arising from the non-degeneracy, which leads to the following conditional exponential expectation:
\begin{align*}
&\lim_{\varepsilon\to0} \mathbb{E}\Bigg[ \exp\Bigg( \int_0^1 s^{-\alpha}I_{0^+}^{\alpha} \!\left( s^\alpha\sigma_s^{-1}\nabla_x b_s(\psi_s,\phi_s)(\tilde X_s-\psi_s) \right) \cdot dW_s \Bigg) \,\Bigg|\, A^\varepsilon \Bigg].
\end{align*}
Let $g_s = (g^1_s, \dots, g^d_s)^T$ be the $\mathbb{R}^d$-valued process defined component-wise by
\begin{equation*}
g_s:= s^{-2\alpha}I_{0^+}^{\alpha} \!\left( s^\alpha\sigma_s^{-1}\nabla_x b_s(\psi_s,\phi_s)(\tilde X_s-\psi_s) \right).
\end{equation*}
By the multi-dimensional extension of Lemma~\ref{singular-c1}, each component satisfies the required regularity conditions. Applying the stochastic integration by parts component-wise, we can decompose the dot product integral as
\begin{align*}
\int_0^1 g_s s^\alpha \cdot dW_s &= \sum_{i=1}^d \int_0^1 g^i_s s^\alpha\,dW^i_s \\
&= \sum_{i=1}^d \left[ g^i_1\int_0^1 s^\alpha\,dW^i_s - \int_0^1 (g^i_s)' \left(\int_0^s u^\alpha\,dW^i_u\right)ds \right] =: D_1 + D_2.
\end{align*}

We first estimate $D_1$. Under the conditioning small-ball event, we utilize the coordinate-wise bound $\max_{1 \le i \le d} |g^i_1| \le C\varepsilon$, yielding
\begin{align*}
1 &\le \lim_{\varepsilon\to0} \mathbb{E}\left[ \exp(|D_1|) \,\middle|\, A^\varepsilon \right] \le \lim_{\varepsilon\to0} \mathbb{E}\left[ \exp\left( C\varepsilon \sum_{i=1}^d \left|\int_0^1 s^\alpha\,dW^i_s\right| \right) \,\middle|\, \bigcap_{i=1}^d A^\varepsilon_i \right] = 1,
\end{align*}
where the last equality follows from the decoupling independence and Lemma~\ref{estimate1'}. Hence
 \[\lim_{\varepsilon\to0} \mathbb{E}[ \exp(D_1) | A^\varepsilon ] = 1.\]

For the remainder term $D_2$, since $\max_{1 \le i \le d} \sup_{s \in [0,1]} |(g^i_s)'| \le C\varepsilon$ under the small-ball topology, we obtain the absolute control:
\begin{equation*}
|D_2| \le C\varepsilon \sum_{i=1}^d \int_0^1 \left| \int_0^s u^\alpha\,dW^i_u \right| ds.
\end{equation*}
Exploiting the independence of the components across different dimensions, we can decouple the conditional expectation into a product of one-dimensional expectations:
\begin{align}\label{eq:D2_decouple}
\lim_{\varepsilon\to0} \mathbb{E}\left[ \exp(|D_2|) \,\middle|\, \bigcap_{i=1}^d A^\varepsilon_i \right] \le \lim_{\varepsilon\to0} \prod_{i=1}^d \mathbb{E}\left[ \exp\left( C\varepsilon \int_0^1 \left|\int_0^s u^\alpha\,dW^i_u\right|ds \right) \,\middle|\, A^\varepsilon_i \right].
\end{align}

Now, we focus on the one-dimensional functional inside the expectation. For any two paths $\omega^1, \omega^2$ in the one-dimensional Wiener space and any $\lambda \in (0,1)$, the linearity of the stochastic integral and the triangle inequality yield
\begin{align*}
\int_0^1 \left| \int_0^s u^\alpha \, d(\lambda\omega_u^1+(1-\lambda)\omega_u^2) \right| ds 
&\le \int_0^1 \left( \lambda \left| \int_0^s u^\alpha \, d\omega_u^1 \right| + (1-\lambda) \left| \int_0^s u^\alpha \, d\omega_u^2 \right| \right) ds \\
&= \lambda \int_0^1 \left| \int_0^s u^\alpha \, d\omega_u^1 \right| ds + (1-\lambda) \int_0^1 \left| \int_0^s u^\alpha \, d\omega_u^2 \right| ds.
\end{align*}
Since the exponential function $x \mapsto \exp(x)$ is convex and monotonically increasing, applying it to the above inequality gives
\begin{align*}
& \exp\left( C\varepsilon \int_0^1 \left| \int_0^s u^\alpha \, d(\lambda\omega_u^1+(1-\lambda)\omega_u^2) \right| ds \right) \\
&\le \exp\left( \lambda C\varepsilon \int_0^1 \left| \int_0^s u^\alpha \, d\omega_u^1 \right| ds + (1-\lambda) C\varepsilon \int_0^1 \left| \int_0^s u^\alpha \, d\omega_u^2 \right| ds \right) \\
&\le \lambda \exp\left( C\varepsilon \int_0^1 \left| \int_0^s u^\alpha \, d\omega_u^1 \right| ds \right) + (1-\lambda) \exp\left( C\varepsilon \int_0^1 \left| \int_0^s u^\alpha \, d\omega_u^2 \right| ds \right).
\end{align*}
This explicit bound, along with the trivial symmetry property under the transformation $\omega \mapsto -\omega$, confirms that for each dimension $i$, the mapping
\begin{equation*}
\omega \mapsto \exp\left( C\varepsilon \int_0^1 \left|\int_0^s u^\alpha\,d\omega_u\right|ds \right)
\end{equation*}
defines a non-negative, convex, and symmetric functional on the one-dimensional Wiener space. 

By using Theorem~\ref{conditional-expectation} individually for each component, we can drop the conditioning to obtain the upper bound:
\begin{align*}
\lim_{\varepsilon\to0} \prod_{i=1}^d \mathbb{E}\left[ \exp\left( C\varepsilon \int_0^1 \left|\int_0^s u^\alpha\,dW^i_u\right|ds \right) \,\middle|\, A^\varepsilon_i \right] 
&\le \prod_{i=1}^d \lim_{\varepsilon\to0} \mathbb{E}\left[ \exp\left( C\varepsilon \int_0^1 \left|\int_0^s u^\alpha\,dW^i_u\right|ds \right) \right] = 1,
\end{align*}
where the final convergence to $1$ for each component is strictly guaranteed by Fernique's theorem. Consequently, we conclude that the conditional expectation for $D_2$ converges to $1$. Combining the results for $D_1$ and $D_2$ completes the proof of the state coupling term.

Next, we investigate the coupling terms arising from the multi-dimensional noise. We consider the conditional exponential expectation given by
\begin{equation*}
\mathbb{E}\Bigg( \exp\Bigg( \int_0^1 s^{-\alpha}I_{0^+}^{\alpha}\!\left(s^\alpha\sigma_s^{-1}\nabla_y b_s(\psi_s,\phi_s)(\tilde{Y}_s-\phi_s)\right) \cdot dW_s \Bigg) \,\Bigg|\, A^\varepsilon \Bigg).
\end{equation*}
To proceed, we decompose the inner product into its coordinate components:
\begin{equation*}
\sum_{i,j=1}^d \int_0^1 s^{-\alpha}I_{0^+}^{\alpha}\!\left(s^\alpha(\sigma^i_s)^{-1} \partial_{y_j} b^i_s(\psi_s,\phi_s)\int_0^s\sigma_u^j \, dB^{H,j}_u \right) dW^i_s.
\end{equation*}

When $i = j$, due to the independence of the noise components across different dimensions, the conditional expectation simplifies to:
\begin{align*}
    & \mathbb{E}\left( \exp\left( \int_0^1 s^{-\alpha}I_{0^+}^{\alpha}\!\left(s^\alpha(\sigma^i_s)^{-1}  \partial_{y_i} b^i_s(\psi_s,\phi_s)\int_0^s\sigma_u^i \, dB^{H,i}_u \right) dW^i_s \right) \;\Bigg|\; \bigcap_{k=1}^d  A_k ^\varepsilon\right) \\
    &= \mathbb{E}\left( \exp\left( \int_0^1 s^{-\alpha}I_{0^+}^{\alpha}\!\left(s^\alpha(\sigma^i_s)^{-1}  \partial_{y_i} b^i_s(\psi_s,\phi_s)\int_0^s\sigma_u^i \, dB^{H,i}_u \right) dW^i_s  \right) \;\Bigg|\; A_i^\varepsilon \right).
\end{align*} 
According to Lemma~\ref{estimate2}, this diagonal portion transforms into a double stochastic integral corresponding to the trace of the integrand. Summing over all $i=j$, we obtain the classical divergence term:
\begin{align*}
     &\lim_{\varepsilon\to 0} \mathbb{E}\left( \exp\left(\sum_{i=1}^d \int_0^1 s^{-\alpha}I_{0^+}^{\alpha}\!\left(s^\alpha(\sigma^i_s)^{-1}  \partial_{y_i} b^i_s(\psi_s,\phi_s)\int_0^s\sigma_u^i \, dB^{H,i}_u \right) dW^i_s \right) \;\Bigg|\; A^\varepsilon \right) \\& = \exp\left(-\frac{d_H}{2} \int_0^1 \nabla_y \cdot b_s(\psi_s,\phi_s) \, ds \right).
\end{align*}

It remains to treat the additional cross-coupling terms where $i \neq j$. By independence, the relevant conditional expectation isolates to the events $A_i$ and $A_j$:
\begin{align*}
      & \mathbb{E}\left( \exp\left( \int_0^1 s^{-\alpha}I_{0^+}^{\alpha}\!\left(s^\alpha(\sigma^i_s)^{-1}  \partial_{y_j} b^i_s(\psi_s,\phi_s)\int_0^s\sigma_u^j \, dB^{H,j}_u \right) dW^i_s\right) \;\Bigg|\; A^\varepsilon \right) \\
    &= \mathbb{E}\left( \exp\left( \int_0^1 s^{-\alpha}I_{0^+}^{\alpha}\!\left(s^\alpha(\sigma^i_s)^{-1}  \partial_{y_j} b^i_s(\psi_s,\phi_s)\int_0^s\sigma_u^j \, dB^{H,j}_u \right) dW^i_s \right) \;\Bigg|\; A_i^\varepsilon, A_j^\varepsilon \right).
\end{align*}

Let us denote the predictable integrand, which solely depends on the path $\omega_j$, as 
\begin{equation*}
    h_s(\omega_j) := s^{-\alpha} I^\alpha_{0^+} \!\left( s^\alpha (\sigma_s^{i})^{-1}\partial_{y_j} b_s^i(\psi_s,\phi_s) \int_0^s \sigma_u^j\,dB^{H,j}_u(\omega_j) \right).
\end{equation*}
By Fubini's theorem (via the tower property of conditional expectation) and the independence of the product measure $\mathbb{P} = \mathbb{P}_i \otimes \mathbb{P}_j$, we can rewrite the expectation as an iterated integral:
\begin{align*}
     &\mathbb{E}\left( \exp\left( \int_0^1 h_s(\omega_j)\, dW^i_s  \right) \;\Bigg|\; A_i^\varepsilon, A_j^\varepsilon \right)\\
    &= \int_{\Omega_j} \left( \int_{\Omega_i} \exp\left( \int_0^1 h_s(\omega_j)\, dW^i_s(\omega_i)\right) \frac{\mathbf{1}_{A_i^\varepsilon}(\omega_i)}{\mathbb{P}(A_i^\varepsilon)} d\mathbb{P}_i(\omega_i) \right) \frac{\mathbf{1}_{A_j^\varepsilon}(\omega_j)}{\mathbb{P}(A_j^\varepsilon)} d\mathbb{P}_j(\omega_j) \\
    &= \mathbb{E}_j\left( \mathbb{E}_i\left(\exp\left( \int_0^1 h_s(\omega_j)\, dW^i_s (\omega_i) \right) \Bigg|\; A_i^\varepsilon \right) \;\Bigg|\; A_j^\varepsilon \right).
\end{align*}
To rigorously evaluate the limit of this expression as $\varepsilon \to 0$, we establish uniform bounds for the inner conditional expectation. First, for the lower bound, we apply Jensen's inequality. For a fixed $\omega_j$, the stochastic integral $\int_0^1 h_s(\omega_j) dW^i_s$ is a centered Gaussian random variable (which is an odd function of $\omega_i$). Since the conditioning set $A_i^\varepsilon$ is a symmetric convex set centered at the origin, the conditional expectation of this odd functional over $A_i^\varepsilon$ is exactly zero. Thus,
\begin{align*}
    \mathbb{E}_i\left(\exp\left( \int_0^1 h_s(\omega_j)\, dW^i_s(\omega_i) \right) \Bigg|\; A_i^\varepsilon \right) 
    &\geq \exp\left( \mathbb{E}_i\left( \int_0^1 h_s(\omega_j)\, dW^i_s(\omega_i) \Bigg|\; A_i ^\varepsilon \right) \right) = \exp(0) = 1.
\end{align*}

For the upper bound, we leverage Theorem~\ref{conditional-expectation}. The exponential function can be symmetrized to $\cosh(x)$, which is an even and convex function. Conditioning a symmetric Gaussian measure on a symmetric convex set $A_i^\varepsilon$ strictly decreases the expectation of an even convex function. Hence, the conditional expectation is bounded above by the unconditional expectation:
\begin{align*}
    \mathbb{E}_i\left(\exp\left( \int_0^1 h_s(\omega_j)\, dW^i_s(\omega_i) \right) \Bigg|\; A_i^\varepsilon \right) 
    &\le \mathbb{E}_i\left(\exp\left( \int_0^1 h_s(\omega_j)\, dW^i_s(\omega_i) \right) \right) \\
    &= \exp\left( \frac{1}{2} \int_0^1 |h_s(\omega_j)|^2 \, ds \right).
\end{align*}
Crucially, on the outer conditioning event $A_j^\varepsilon$, the path $\omega_j$ satisfies the small-ball condition. By Lemma~\ref{singular-c1}, this implies the uniform bound $\sup_{s \in [0,1]} |h_s(\omega_j)| \leq C\varepsilon$ for some constant $C>0$. Therefore, the upper bound is uniformly controlled by $\exp(C^2 \varepsilon^2 / 2)$.

Consequently, for almost every $\omega_j \in A_j^\varepsilon$, the inner expectation is uniformly squeezed:
\begin{equation*}
    1 \leq \mathbb{E}_i\left(\exp\left( \int_0^1 h_s(\omega_j)\, dW^i_s(\omega_i) \right) \Bigg|\; A_i^\varepsilon \right) \leq \exp\left(\frac{C^2 \varepsilon^2}{2}\right).
\end{equation*}
Taking the outer conditional expectation $\mathbb{E}_j(\,\cdot \mid A_j^\varepsilon)$ preserves these constant bounds. By applying the Squeeze Theorem as $\varepsilon \to 0$, the measure dependency completely vanishes, and we conclude that the cross-coupling terms contribute trivially:
\begin{equation*}
    \lim_{\varepsilon\to 0} \mathbb{E}\left( \exp\left( \int_0^1 h_s(\omega_j)\, dW^i_s  \right) \;\Bigg|\; A_i^\varepsilon, A_j^\varepsilon \right) = 1.
\end{equation*}

Finally, all remaining terms can be handled exactly as in \cite{Nualart}. Therefore, we obtain the functional form for Case I:
\begin{equation*}
J(\phi) = \frac12\int_0^1 \left| \dot{\phi}_s - s^{-\alpha}I_{0^+}^{\alpha}\!\left(s^\alpha\sigma_s^{-1}b_s(\psi_s,\phi_s)\right) \right|^2 ds + \frac{d_H}{2} \int_0^1 \nabla_y \cdot b_s(\psi_s,\phi_s)\,ds.
\end{equation*}

\medskip

\noindent\textbf{Case II: $1/2<H<1$.}

In this regime, we aim to prove that the OM functional is given by
\begin{equation*}
J(\phi) = \frac12\int_0^1 \left| \dot{\phi}_s - s^{\alpha}D_{0^+}^{\alpha}\!\left( s^{-\alpha}\sigma_s^{-1}b_s(\psi_s,\phi_s) \right) \right|^2 ds + \frac{d_H}{2} \int_0^1 \nabla_y\cdot b_s(\psi_s,\phi_s)\,ds.
\end{equation*}
As in Case I, all arguments identical to the standard non-degenerate setting are omitted. We focus directly on the new term generated by the degenerate noise structure. It therefore remains to analyze the conditional exponential expectation:
\begin{equation*}
\mathbb{E}\Bigg[ \exp\Bigg( \int_0^1 s^{\alpha}D_{0^+}^{\alpha}\!\left( s^{-\alpha}\sigma_s^{-1}\nabla_x b_s(\psi_s,\phi_s)(\tilde X_s-\psi_s) \right)\cdot dW_s \Bigg) \,\Bigg|\, A^\varepsilon \Bigg].
\end{equation*}
Let
\begin{equation*}
g_s= s^{2\alpha}D_{0^+}^{\alpha}\!\left( s^{-\alpha}\sigma_s^{-1}\nabla_x b_s(\psi_s,\phi_s)(\tilde X_s-\psi_s) \right).
\end{equation*}
By applying the multi-dimensional extension of Lemma~\ref{regular-c1}, we can perform stochastic integration by parts on the inner product integral to yield:
\begin{align*}
\int_0^1 s^{\alpha}D_{0^+}^{\alpha}\!\left( s^{-\alpha}\sigma_s^{-1}\nabla_x b_s(\psi_s,\phi_s)(\tilde X_s-\psi_s) \right)\cdot dW_s
&= \int_0^1 s^{-\alpha} g_s\cdot dW_s \\
&= g_1\cdot \int_0^1 s^{-\alpha}\,dW_s - \int_0^1 g_s' \cdot \left(\int_0^s u^{-\alpha}\,dW_u\right)ds \\
&=:D_1+D_2.
\end{align*}
Proceeding exactly as in the singular case, and applying Lemma~\ref{estimate1'}, Theorem~\ref{conditional-expectation}, and Lemma~\ref{regular-c1}, we obtain
\begin{equation*}
\lim_{\varepsilon\to0} \mathbb{E}\Bigg[ \exp\Bigg( \int_0^1 s^{\alpha}D_{0^+}^{\alpha}\!\left( s^{-\alpha}\sigma_s^{-1}\nabla_x b_s(\psi_s,\phi_s)(\tilde X_s-\psi_s) \right)dW_s \Bigg) \,\Bigg|\, A^\varepsilon \Bigg] = 1.
\end{equation*}
For the cross-coupling terms of the multi-dimensional noise, substituting fractional integrals with fractional derivatives does not introduce any qualitative deviations in the squeezing argument. Therefore, we can directly conclude that the OM functional for Case II takes the form:
\begin{equation*}
J(\phi) = \frac12\int_0^1 \left| \dot{\phi}_s - s^{\alpha}D_{0^+}^{\alpha}\!\left( s^{-\alpha}\sigma_s^{-1}b_s(\psi_s,\phi_s) \right) \right|^2 ds + \frac{d_H}{2} \int_0^1 \nabla_y \cdot b_s(\psi_s,\phi_s)\,ds.
\end{equation*}

\medskip

\noindent\textbf{Case III: $H=1/2$.}

In the classical Brownian case ($H=1/2$), the proof becomes substantially simpler because both the small-ball constraint topology and the driving noise appearing in the expectation are standard Brownian motions. We directly provide the resulting classical Lagrangian expression as:
\begin{equation*}
J(\phi) = \frac12\int_0^1 \left| \sigma_s^{-1} \left( \dot{\phi}_s - b_s(\psi_s,\phi_s) \right) \right|^2 ds + \frac{1}{2}\int_0^1 \nabla_y\cdot b_s(\psi_s,\phi_s)\,ds. 
\end{equation*}
The proof is now complete.
\end{proof}

\begin{remark}
    To the best of our knowledge, recent LDP results for fractional noise \cite{FanYuYuan2023} demonstrate that the rate function corresponds to the kinetic energy term of the OM functional. Thus, we anticipate that for the degenerate noise case, the associated LDP rate function can similarly be proved to take the form:
    \begin{equation}\label{rate_fn}
        I(\phi)= \frac{1}{2}\int_0^1 \left| (K_H^\sigma)^{-1} \left( \phi_\cdot - y_0 - \int_0^\cdot b_v(\psi_v, \phi_v) \, dv \right)(s) \right|^2 ds,
    \end{equation}
    for $\phi-y_0\in \mathcal{H}_H$, where $\psi$ and $\phi$ are coupled via \eqref{sec}.
\end{remark}

\section{Euler--Lagrange Equations, Path Preservation and Numerical Experiments}

In this section, we investigate the properties of the most probable transition paths and the most probable paths by utilizing the explicit expression of the OM functional derived in \eqref{omexpress}. In the first subsection, we deduce the Euler--Lagrange equations, which establish the necessary optimality conditions that the most probable transition paths must satisfy. In the second subsection, we establish a divergence condition on the drift term for an SDE to preserve the most probable path. In the third subsection, we conduct numerical simulations on specific governing equations to empirically validate the primary theoretical results of this paper.

\subsection{Euler--Lagrange Equations}

In this subsection, we derive the Euler--Lagrange equations associated with
the OM functional \eqref{mainresultfunctional} in order to characterize the
optimality conditions satisfied by the most probable transition paths. The
original action functional is given by
\begin{equation*}
    J(\phi)
    =
    \int_0^1 L(s,\psi_s,\phi_s)\,ds,
\end{equation*}
where the state trajectory $\psi$ is implicitly determined by the reference
trajectory $\phi$ through the coupled differential constraint \eqref{sec}.
To systematically handle this differential constraint, we introduce a
time-dependent Lagrange multiplier $\lambda_s$, equivalently the adjoint
variable, and define the augmented Lagrangian
\begin{equation}
    \mathcal{L}(s,\psi_s,\phi_s,\lambda_s)
    =
    L(s,\psi_s,\phi_s)
    +
    \lambda_s^T\bigl(\psi'_s-a_s(\psi_s,\phi_s)\bigr),
\end{equation}
which induces the augmented action functional
\begin{equation}
    \mathcal{J}(\psi,\phi,\lambda)
    =
    \int_0^1
    \mathcal{L}(s,\psi_s,\phi_s,\lambda_s)\,ds.
\end{equation}

Accordingly, any smooth minimizer of the constrained variational problem must
be a stationary point of $\mathcal{J}$; equivalently, the first-order variations
with respect to the triplet $(\psi,\phi,\lambda)$ vanish simultaneously.

We first consider the reference case $H=\frac12$. Introducing the
positive-definite matrix
\[
    \Sigma_s^{-1}:=(\sigma_s\sigma_s^T)^{-1},
\]
and omitting irrelevant normalization constants, the augmented action
functional can be written in the form
\begin{equation*}
\begin{aligned}
\mathcal{J}(\psi,\phi,\lambda)
=
\int_0^1 \Big[
&
\bigl(\phi'_s-b_s(\psi_s,\phi_s)\bigr)^T
\Sigma_s^{-1}
\bigl(\phi'_s-b_s(\psi_s,\phi_s)\bigr)
\\
&
+
\nabla_y\cdot b_s(\psi_s,\phi_s)
+
\lambda_s^T
\bigl(\psi'_s-a_s(\psi_s,\phi_s)\bigr)
\Big]\,ds.
\end{aligned}
\end{equation*}

To compute the first variations, we introduce independent perturbation fields
with suitable dimensions and boundary conditions compatible with the
variational structure of the system. More precisely, let
\[
    \xi\in\mathcal{C}_\psi
    :=
    \bigl\{
    \xi\in C^\infty([0,1];\mathbb{R}^m)
    :
    \xi_0=0
    \bigr\},
\]
together with
\[
    \zeta\in C_0^\infty([0,1];\mathbb{R}^m),
    \qquad
    \eta\in C_0^\infty([0,1];\mathbb{R}^n).
\]

The boundary conditions imposed on these perturbations are dictated by the
differential order of the resulting Euler--Lagrange system. Specifically, both
the state variable $\psi$ and the adjoint variable $\lambda$ satisfy first-order
differential equations, whereas the reference path $\phi$ satisfies a
second-order equation. Since the initial configurations $(\psi_0,\phi_0)$ are
fixed by the underlying stochastic dynamics, the remaining boundary degrees of
freedom are naturally assigned to the terminal condition of $\lambda$ and the
endpoint constraints of $\phi$.

We first compute the variation with respect to the state variable $\psi$ in the
direction $\xi\in\mathcal{C}_\psi$. Since $\xi_0=0$, the first variation is
given by
\begin{equation}\label{var_psi}
\begin{aligned}
\delta_\psi\mathcal{J}
=
\int_0^1 \Big[
&
-2
\bigl(
\phi'_s-b_s(\psi_s,\phi_s)
\bigr)^T
\Sigma_s^{-1}
\nabla_x b_s(\psi_s,\phi_s)
\\
&
+
\nabla_x
\bigl(
\nabla_y\cdot b_s(\psi_s,\phi_s)
\bigr)
-
\lambda_s'^T
-
\lambda_s^T
\nabla_x a_s(\psi_s,\phi_s)
\Big]
\xi_s\,ds.
\end{aligned}
\end{equation}
Here we have used the integration-by-parts identity
\begin{equation*}
    \int_0^1
    \lambda_s^T\xi'_s\,ds
    =
    \lambda_1^T\xi_1
    -
    \lambda_0^T\xi_0
    -
    \int_0^1
    \lambda_s'^T\xi_s\,ds.
\end{equation*}
Since $\xi_0=0$ while the terminal perturbation $\xi_1$ remains arbitrary,
cancellation of the boundary contribution requires the natural transversality
condition
\[
    \lambda_1=0.
\]

Next, varying the multiplier $\lambda$ in the direction
$\zeta\in C_0^\infty([0,1];\mathbb{R}^m)$ immediately recovers the forward
state equation:
\begin{equation}\label{var_lambda}
    \delta_\lambda\mathcal{J}
    =
    \int_0^1
    \zeta_s^T
    \bigl(
    \psi'_s-a_s(\psi_s,\phi_s)
    \bigr)\,ds.
\end{equation}

Finally, we perturb the reference trajectory $\phi$ along
$\eta\in C_0^\infty([0,1];\mathbb{R}^n)$ satisfying $\eta_0=\eta_1=0$.
After integration by parts with respect to the velocity perturbation $\eta'_s$,
we obtain
\begin{equation}\label{var_phi}
\begin{aligned}
\delta_\phi\mathcal{J}
=
\int_0^1 \Big[
&
-2
\frac{d}{ds}
\left(
\bigl(
\phi'_s-b_s(\psi_s,\phi_s)
\bigr)^T
\Sigma_s^{-1}
\right)
\\
&
-
2
\bigl(
\phi'_s-b_s(\psi_s,\phi_s)
\bigr)^T
\Sigma_s^{-1}
\nabla_y b_s(\psi_s,\phi_s)
\\
&
+
\nabla_y
\bigl(
\nabla_y\cdot b_s(\psi_s,\phi_s)
\bigr)
-
\lambda_s^T
\nabla_y a_s(\psi_s,\phi_s)
\Big]
\eta_s\,ds.
\end{aligned}
\end{equation}

Applying the Fundamental Lemma of the Calculus of Variations and taking
transposes of the variational terms, we arrive at the fully expanded coupled
Euler--Lagrange system governing the multidimensional most probable transition
paths:
\begin{subequations}\label{EL_explicit_system}
\begin{numcases}{}
    \psi'_s
    =
    a_s(\psi_s,\phi_s),
    \label{EL_explicit_state}
    \\[12pt]
    \begin{aligned}
    \lambda'_s
    =
    &
    -
    \bigl[
    \nabla_x a_s(\psi_s,\phi_s)
    \bigr]^T
    \lambda_s
    \\
    &
    -
    2
    \bigl[
    \nabla_x b_s(\psi_s,\phi_s)
    \bigr]^T
    \Sigma_s^{-1}
    \bigl(
    \phi'_s-b_s(\psi_s,\phi_s)
    \bigr)
    \\
    &
    +
    \bigl[
    \nabla_x
    (
    \nabla_y\cdot b_s(\psi_s,\phi_s)
    )
    \bigr]^T,
    \end{aligned}
    \label{EL_explicit_adjoint}
    \\[12pt]
    \begin{aligned}
    2
    \frac{d}{ds}
    \left[
    \Sigma_s^{-1}
    \bigl(
    \phi'_s-b_s(\psi_s,\phi_s)
    \bigr)
    \right]
    =
    &
    -
    2
    \bigl[
    \nabla_y b_s(\psi_s,\phi_s)
    \bigr]^T
    \Sigma_s^{-1}
    \bigl(
    \phi'_s-b_s(\psi_s,\phi_s)
    \bigr)
    \\
    &
    +
    \bigl[
    \nabla_y
    (
    \nabla_y\cdot b_s(\psi_s,\phi_s)
    )
    \bigr]^T
    \\
    &
    -
    \bigl[
    \nabla_y a_s(\psi_s,\phi_s)
    \bigr]^T
    \lambda_s.
    \end{aligned}
    \label{EL_explicit_optimality}
\end{numcases}
\end{subequations}
subject to the two-point boundary conditions
\begin{equation}\label{boundary condition}
\begin{cases}
    \psi_0=x_0\in\mathbb{R}^m,
    & \text{(initial condition),}
    \\[4pt]
    \phi_0=y_0,
    \quad
    \phi_1=y_1\in\mathbb{R}^n,
    & \text{(boundary conditions),}
    \\[4pt]
    \lambda_1=0\in\mathbb{R}^m,
    & \text{(transversality condition).}
\end{cases}
\end{equation}

In this formulation, \eqref{EL_explicit_state} describes the forward
propagation of the state variable in $\mathbb{R}^m$,
\eqref{EL_explicit_adjoint} governs the backward evolution of the adjoint
variable $\lambda_s$, and \eqref{EL_explicit_optimality} yields the second-order
optimality condition satisfied by the critical trajectory $\phi_s$ in
$\mathbb{R}^n$.

We now turn to the fractional setting. In this case, the augmented action
functional takes the form
\begin{equation*}
\begin{aligned}
\mathcal{J}(\psi,\phi,\lambda)
=
\int_0^1 \Bigg[
&
\left|
(K_H^\sigma)^{-1}
\left(
\phi_\cdot-y_0-\int_0^\cdot
b_u(\psi_u,\phi_u)\,du
\right)(s)
\right|^2
\\
&
+
d_H
\nabla_y\cdot b_s(\psi_s,\phi_s)
+
\lambda_s^{T}
\bigl(
\psi'_s-a_s(\psi_s,\phi_s)
\bigr)
\Bigg]\,ds.
\end{aligned}
\end{equation*}

For notational convenience, define
\[
    G_s
    :=
    \Biggl(
    \mathcal{K}_H^\sigma
    \circ
    (K_H^\sigma)^{-1}
    \left[
    \phi_\cdot-y_0
    -
    \int_0^\cdot
    b_u(\psi_u,\phi_u)\,du
    \right]
    \Biggr)(s).
\]
By applying the fractional integration-by-parts formula \eqref{fffubini}, the
left-sided fractional operators are transferred to their corresponding
right-sided counterparts. Performing the first variation then leads to the
following fractional Euler--Lagrange system:
\begin{subequations}\label{EL_fractional_system}
\begin{numcases}{}
\psi'_s = a_s(\psi_s,\phi_s),
\label{EL_frac_state}
\\[12pt]
\begin{aligned}
\lambda'_s
= &
-
\bigl[
\nabla_x a_s(\psi_s,\phi_s)
\bigr]^T
\lambda_s
\\
&
-
2
\bigl[
\nabla_x b_s(\psi_s,\phi_s)
\bigr]^T
G_s
\\
&
+
d_H
\bigl[
\nabla_x
(
\nabla_y\cdot b_s(\psi_s,\phi_s)
)
\bigr]^T,
\end{aligned}
\label{EL_frac_adjoint}
\\[12pt]
\begin{aligned}
2
\left(
\frac{d}{ds}
+
\bigl[
\nabla_y b_s(\psi_s,\phi_s)
\bigr]^T
\right)
G_s
= &
d_H
\bigl[
\nabla_y
(
\nabla_y\cdot b_s(\psi_s,\phi_s)
)
\bigr]^T
\\
&
-
\bigl[
\nabla_y a_s(\psi_s,\phi_s)
\bigr]^T
\lambda_s.
\end{aligned}
\label{EL_frac_optimality}
\end{numcases}
\end{subequations}
Here the operator $\mathcal{K}_H^\sigma$ is defined by
\[
\mathcal{K}_H^\sigma f
=
\begin{cases}
(\sigma_s^{-1})^T
\,s^{-\alpha}
D_{1^-}^{\alpha}
\bigl(
s^{\alpha}f_s
\bigr),
& \frac12<H<1,
\\[2mm]
(\sigma_s^{-1})^T
\,s^{\alpha}
I_{1^-}^{\alpha}
\bigl(
s^{-\alpha}f_s
\bigr),
& \frac14<H<\frac12.
\end{cases}
\]
The associated boundary conditions are identical to those in
\eqref{boundary condition}.

\begin{remark}
By the fractional Sobolev embedding, the Cameron--Martin space
$\mathcal{H}_H^\sigma([0,1];\mathbb{R}^n)$ is continuously embedded into
$C^\gamma_0([0,1];\mathbb{R}^n)$ for every $\gamma<H$. Hence elements of
$\mathcal{H}_H^\sigma([0,1];\mathbb{R}^n)$ admit continuous representatives,
and it is therefore meaningful to impose endpoint conditions such as
$\phi_0=y_0$ and $\phi_1=y_1$.
\end{remark}

\begin{remark} \label{rem:boundary_transfer}
As evidenced by the variational structure, for a general coupled system, we are typically restricted to investigating the most probable transition of the control component $\phi$ from an initial state $\phi_0 = y_0$ to a terminal configuration $\phi_1 = y_1$. However, if the underlying system \eqref{fir} possesses a specific mechanical structure (for instance, the classical kinematic constraint $\psi'_s = \phi_s$), the terminal boundary degrees of freedom absorbed by the adjoint variable $\lambda$ can be systematically transferred to the state variable $\psi$. In such specialized scenarios, the system condenses into a higher-order differential equation, allowing us to explicitly prescribe and investigate the full state-control transition for the pair $(\psi, \phi)$ from $(x_0, y_0)$ to $(x_1, y_1)$.
\end{remark}

	\subsection{Most Probable Path Preservation Theorem}

	The Euler--Lagrange equations derived above characterize the most probable transition paths connecting prescribed endpoints and therefore depend on both the initial and terminal configurations. It is natural to ask whether one can identify distinguished trajectories when the terminal constraint is removed.

To this end, we consider a free-terminal formulation of the most probable path problem. A path $\phi^*$ satisfying $\phi^*-y_0\in\mathcal H^\sigma_H$ is called a free-terminal most probable path of system \eqref{fir} if it minimizes the OM functional over the admissible class, namely,
\[
J(\phi^*)
=
\inf_{\phi-y_0\in\mathcal H^\sigma_H} J(\phi).
\]

We say that a stochastic system possesses the path-preservation property if every solution trajectory of the corresponding deterministic (noise-free) system is also a free-terminal most probable path of the stochastic system.

Path-preservation phenomena have previously been observed for certain classes of stochastic Hamiltonian systems. In particular, \cite{xinze2026most} showed that, under suitable structural assumptions, the introduction of non-degenerate stochastic perturbations does not alter the set of most probable paths. The result established below demonstrates that an analogous property continues to hold for the degenerate systems considered in the present work, although the underlying mechanism is fundamentally different.

As will be seen from the explicit expression of the OM functional \eqref{omexpress}, the action consists of a nonnegative kinetic term and a divergence contribution associated with the deterministic flow. Under an appropriate constant-divergence condition, the latter becomes path-independent and therefore does not influence the minimization problem. This observation leads to the following path-preservation theorem.

In what follows, we investigate the path-preservation property for the coupled degenerate structure considered in this work. As observed from the explicit expression of the Onsager--Machlup functional \eqref{omexpress}, the energy dissipation of a path consists of two distinct components: the kinetic energy term, and the divergence term $\nabla_y \cdot b(\psi, \phi)$ which characterizes the phase-space volume evolution of the non-degenerate $y$-component under the deterministic flow. Consequently, if we require the contraction or expansion rate of the phase volume in the $y$-subspace to be spatially uniform, the non-local influence of this divergence contribution vanishes globally. This mechanism leads to the following theorem.

\begin{theorem} \label{mpp}
    Consider the degenerate stochastic system \eqref{fir}. If the drift coefficient of the non-degenerate component satisfies 
    \begin{equation*}
        \nabla_y \cdot b(x,y) = C, \quad \forall (x,y) \in \mathbb{R}^m \times \mathbb{R}^n,
    \end{equation*}
    where $C$ is a constant, then the system possesses the path-preservation property.
\end{theorem}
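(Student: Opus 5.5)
Under the hypothesis $\nabla_y\cdot b\equiv C$, the divergence term in \eqref{omexpress} stops depending on the path, and minimizing $J$ reduces to minimizing a nonnegative kinetic energy that vanishes exactly on deterministic trajectories.

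\textbf{Step 1: split the functional.} By Theorem~\ref{result}, for every admissible $\phi$ with $\phi-y_0\in\mathcal H^\sigma_H$ and associated $\psi$ solving \eqref{sec},
\[
J(\phi)=\frac12\int_0^1\Bigl|(K_H^\sigma)^{-1}\Bigl(\phi_\cdot-y_0-\int_0^\cdot b_v(\psi_v,\phi_v)\,dv\Bigr)(s)\Bigr|^2ds+\frac{d_H}{2}\int_0^1 C\,ds .
\]
(For $H=1/2$ the constant $d_H$ is replaced by $1$.) The second term equals $d_HC/2$ for every admissible $\phi$. Hence
\[
J(\phi)\ge \frac{d_H C}{2}\quad\text{for all admissible }\phi .
\]

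\textbf{Step 2: deterministic trajectories attain the bound.} Let $(\bar\psi,\bar\phi)$ solve the noise-free system $\bar\psi'=a(\bar\psi,\bar\phi)$, $\bar\phi'=b(\bar\psi,\bar\phi)$ with $(\bar\psi_0,\bar\phi_0)=(x_0,y_0)$. Its $\psi$-component is exactly the response path of $\bar\phi$ defined by \eqref{sec}, by uniqueness of solutions of \eqref{sec} under the Lipschitz hypothesis on $a$. Moreover,
\[
\bar\phi_t-y_0-\int_0^t b_v(\bar\psi_v,\bar\phi_v)\,dv\equiv0 ,
\]
so the kinetic term is $\|(K_H^\sigma)^{-1}0\|_{L^2}^2=0$. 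Therefore $J(\bar\phi)=d_HC/2$, which equals the infimum from Step 1.

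\textbf{Step 3 (main obstacle): admissibility.} We must check that $\bar\phi-y_0\in\mathcal H^\sigma_H$, so that $\bar\phi$ belongs to the competing class. Since $b$ is bounded and continuous and $\bar\psi,\bar\phi$ are $C^1$, the path $\bar\phi$ is $C^1$ with $\bar\phi_0=y_0$. We then show $\int_0^\cdot\sigma_s^{-1}\,d\bar\phi_s\in\mathcal H_H$ using \eqref{eq:Ksigma_inverse_explicit}.
\begin{itemize}
\item For $H<\tfrac12$, the quantity $s^{-\alpha}I^\alpha_{0^+}(s^\alpha\sigma^{-1}\bar\phi')$ is bounded, hence in $L^2$.
\item For $H>\tfrac12$, we need $s^{\alpha}D^\alpha_{0^+}(s^{-\alpha}g)\in L^2$ with $g=\sigma^{-1}b(\bar\psi,\bar\phi)$. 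Under Assumption (A)(2), $b$ is Lipschitz, so $g$ is Lipschitz; then the Marchaud formula \eqref{Weyl} gives a bounded contribution from the integral term, and the singular part is of order $s^{-\alpha}$, which is square-integrable since $\alpha<\tfrac12$.
\item For $H=\tfrac12$ the claim is trivial.
\end{itemize}

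Steps 1--3 together show that $\bar\phi$ minimizes $J$ over $\phi-y_0\in\mathcal H^\sigma_H$. Since $(\bar\psi,\bar\phi)$ was an arbitrary deterministic trajectory, every such trajectory is a free-terminal most probable path, which is the path-preservation property.
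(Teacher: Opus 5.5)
Your argument is the same as the paper's: constant divergence makes the second term of \eqref{omexpress} equal to $d_HC/2$ for every admissible $\phi$, and the noise-free trajectory from $(x_0,y_0)$ makes the nonnegative kinetic term vanish. Your Step~3 admissibility check is a useful addition that the paper leaves implicit.

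One small correction in Step~3 concerns $H>\tfrac12$ with $b_0(x_0,y_0)\neq 0$. There the integral term in the Marchaud formula is not bounded: after multiplication by $s^\alpha$ it is also of order $s^{-\alpha}$. For example, $s^{\alpha}D^\alpha_{0^+}(s^{-\alpha})=\frac{\Gamma(1-\alpha)}{\Gamma(1-2\alpha)}s^{-\alpha}$. This is still square-integrable since $\alpha<\tfrac12$, so your conclusion stands.
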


\begin{proof}
    Let $(\psi^*_t, \phi^*_t)_{t \in [0,1]}$ denote the unique solution trajectory of the deterministic system corresponding to \eqref{fir} issued from the initial condition $(x_0, y_0)$. According to the explicit formulation of the OM functional \eqref{omexpress}, for any valid reference path $\phi$, the functional reads
    \begin{equation*}
        J(\phi) = \frac{1}{2} \int_0^1 \left| (K_H^\sigma)^{-1} \left( \phi_{\cdot} - y_0 - \int_0^\cdot b_v(\psi_v, \phi_v) \, dv \right)(s) \right|^2 ds +  \frac{Cd_H}{2}   .
    \end{equation*}
    Since the deterministic trajectory $(\psi^*, \phi^*)$ satisfies the noiseless governing equations, the terms inside the fractional inverse operator cancel out exactly, which yields
    \begin{equation*}
        \frac{Cd_H}{2} = J(\phi^*) = \inf_{\phi - y_0 \in \mathcal{H}^\sigma_H} J(\phi).
    \end{equation*}
    The proof is complete.
\end{proof}

As a direct application to stochastic Hamiltonian dynamics, we immediately obtain the following corollary.

\begin{corollary}
    Consider the stochastic Hamiltonian system driven by fractional Brownian motion:
    \begin{equation*}
        \begin{cases}
            dq_t = \nabla_p H(q_t,p_t)dt, \\[4pt]
            dp_t = -\nabla_q H(q_t,p_t)dt + \sigma_t dB^H_t.
        \end{cases}
    \end{equation*}
    If the Hamiltonian function $H(q,p)$ satisfies
    \begin{equation*}
        \nabla_p \cdot \bigl(\nabla_q H(q,p)\bigr) = C,
    \end{equation*}
    for some constant $C$, then the stochastic Hamiltonian system possesses the path-preservation property.
\end{corollary}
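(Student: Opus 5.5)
The corollary is a specialization of Theorem~\ref{mpp}. The plan is to identify the Hamiltonian system with the abstract degenerate system \eqref{fir} and check the single structural hypothesis of that theorem.

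\textbf{Step 1: identify the system.} Set $m=n$, $x=q$, $y=p$, and define
\[
a_t(x,y):=\nabla_p H(x,y), \qquad b_t(x,y):=-\nabla_q H(x,y).
\]
Then the stochastic Hamiltonian system is exactly \eqref{fir} with these time-independent drifts. Implicitly we assume that $H$ is regular enough for $a,b$ to satisfy Assumption~(A): for instance $H\in C^3$ with $\nabla_p H$ globally Lipschitz and $\nabla_q H$ bounded, together with the Lipschitz condition when $H>1/2$. Under this assumption Theorem~\ref{result} applies, and the OM functional has the form \eqref{omexpress}.

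\textbf{Step 2: compute the divergence.} The divergence entering \eqref{omexpress} is
\[
\nabla_y\cdot b(x,y) = -\nabla_p\cdot\bigl(\nabla_q H(q,p)\bigr) = -C.
\]
This is constant on $\mathbb R^n\times\mathbb R^n$. The hypothesis of Theorem~\ref{mpp} therefore holds with constant $-C$. It is worth noting that the sign is irrelevant, since only constancy is used.

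\textbf{Step 3: conclude.} Theorem~\ref{mpp} now gives the path-preservation property directly. For clarity, the mechanism is as follows. Let $(q^*,p^*)$ be the deterministic Hamiltonian flow from $(q_0,p_0)$. Then
\[
p^*_\cdot - p_0 - \int_0^\cdot b(q^*_v,p^*_v)\,dv \equiv 0,
\]
so the kinetic term of $J(p^*)$ vanishes. Since $\phi^*-y_0=\int_0^\cdot b\,dv$ is $C^1$, we also have $p^*-p_0\in\mathcal H^\sigma_H$. Hence $J(p^*)=-Cd_H/2$, while $J(\phi)\ge -Cd_H/2$ for every admissible $\phi$.

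\textbf{Main obstacle.} The only nontrivial point is admissibility: we need $p^*-p_0\in\mathcal H_H^\sigma$ and the hypotheses of Theorem~\ref{result}. For admissibility, I would invoke that $C^1$ paths vanishing at $0$ lie in $\mathcal H_H$ for all $H\in(1/4,1)$, using \eqref{eq:Ksigma_inverse_explicit} applied to a smooth function. The hypotheses of Theorem~\ref{result} I would take as standing assumptions on $H$.
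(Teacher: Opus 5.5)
Your proposal is correct and follows the paper's own (implicit) argument: the corollary is Theorem~\ref{mpp} applied with $a=\nabla_p H$ and $b=-\nabla_q H$, for which $\nabla_y\cdot b=-\nabla_p\cdot(\nabla_q H)=-C$ is constant, and the kinetic term vanishes along the deterministic flow. One small correction to your admissibility remark: when the Hurst index exceeds $\tfrac12$, a merely $C^1$ path vanishing at $0$ need not lie in $\mathcal H_H$, because its derivative must lie in $I^{H-1/2}_{0^+}(L^2)$ (for instance, be H\"older of order $>H-\tfrac12$, with $H$ the Hurst index); here this is nevertheless guaranteed, since Assumption~(A)(2) makes $b$ Lipschitz in $(t,x,y)$, so $\sigma_s^{-1}b(q^*_s,p^*_s)$ is Lipschitz in $s$ and $p^*-p_0\in\mathcal H^\sigma_H$.
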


\subsection{Numerical Experiments}

In this subsection, we conduct numerical experiments to empirically validate the theoretical framework established in the preceding sections. To provide concrete insights, we demonstrate the applicability of our primary findings through two representative examples drawn from classical mechanical systems. 

    The first example is the pendulum equation, which is employed to demonstrate the persistence of trajectories in the sense of the most probable path.

\begin{example}[The Pendulum Equation]
Consider the second-order stochastic differential equation modeling a driven pendulum:
\begin{equation}\label{example}
    X_t'' = -\gamma X_t' - k \sin(X_t) + \big(\sigma_0 + A\cos(\omega t)\big)\,\xi_t^H,
\end{equation}
subject to the initial condition
\[
    \big(X_0, X'_0\big) = \big(-\tfrac{\pi}{2}, 0\big),
\]
where the constant $k$ is given by
\[
    k = \frac{1}{2}\left( \frac{\sqrt{\pi}\,\Gamma(1/4)}{\Gamma(3/4)} \right)^{\!2}.
\] 

For simplicity, we focus on the undamped case by setting $\gamma = 0$. According to Theorem~\ref{mpp}, equation~\eqref{example} preserves the most probable path under these conditions.

Numerical simulations are presented in Figs.~\ref{fig1a}, \ref{fig1b}, and \ref{fig1c}. Furthermore, for the regular regime case shown in Fig.~\ref{fig1c}, the theoretical regularity conditions are fully satisfied. As observed in the figures, the most probable path is exceptionally close to the mean path, with both situated at the center of the probability cloud formed by the sample trajectories. Notably, although the noise frequency and the Hurst index vary, the most probable path remains completely unaffected.
\end{example}
\begin{figure}[htbp]
    \centering
    % 第一张图：H=0.3
    \includegraphics[width=0.48\textwidth]{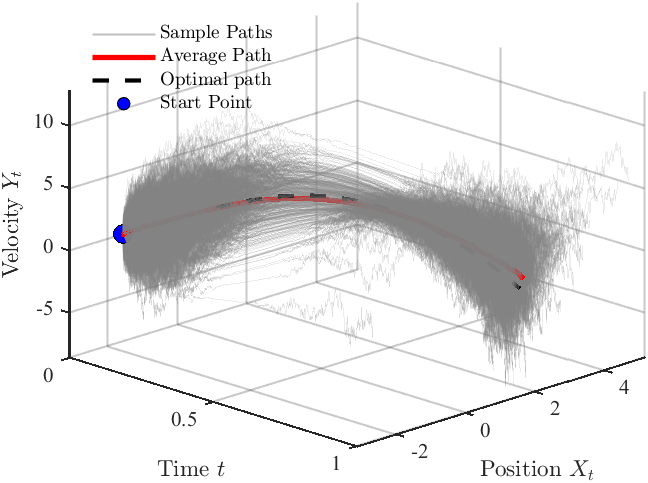}
   \caption{Simulation of the most probable path for $H=0.3$, $\sigma_0=2$, $A=1.5$, and $\omega=2$.}
    \label{fig1a}
    
    \vspace{0.3cm} % 每张图之间留出适当的物理空隙，防止上下标题粘连
    
    % 第二张图：H=0.5
    \includegraphics[width=0.48\textwidth]{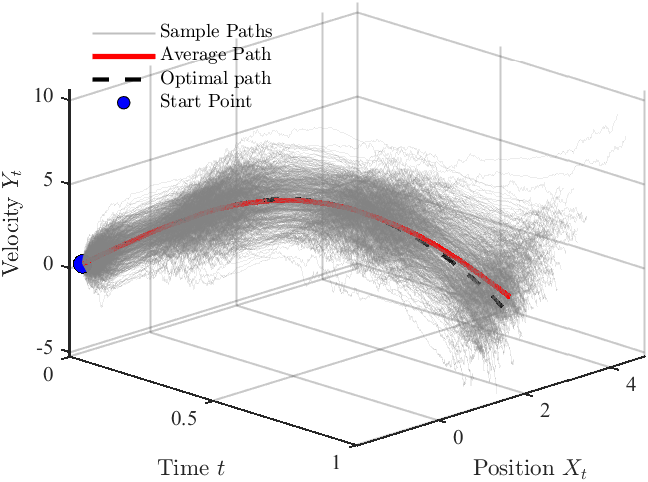}
    \caption{Simulation of the most probable path for $H=0.5, \sigma_0=2, A=1.5, \omega=6\pi$.}
    \label{fig1b}
    
    \vspace{0.3cm}
    
    % 第三张图：H=0.8
    \includegraphics[width=0.48\textwidth]{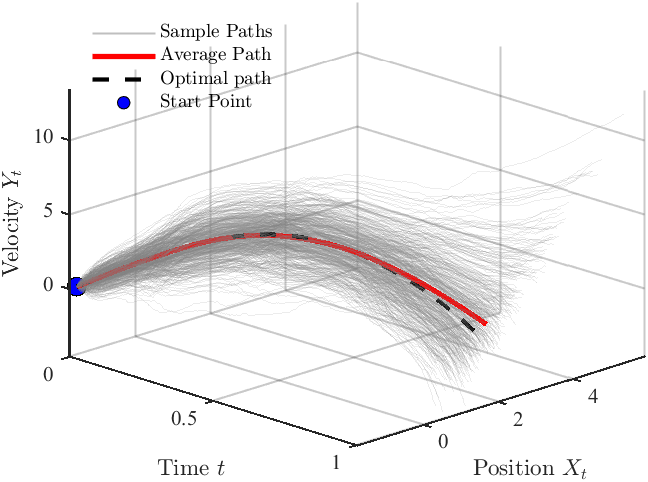}
    \caption{Simulation of the most probable path for $H=0.8, \sigma_0=3, A=1.5, \omega=8\pi$.}
    \label{fig1c}
\end{figure}

The second example concerns a stochastic Duffing equation, a quintessential model for nonlinear oscillations. Here, we investigate the most probable transition path between two stable equilibrium points.

\begin{example}[The Duffing Equation]
Consider the following system:
\begin{equation} \label{equation 2}
    X''_t + \gamma X_t' + V'(X_t) = \sigma_t \xi_t^H.
\end{equation}
To observe its characteristic transition behavior, we set $\gamma = 0.1$, $V(x) = \frac{1}{4}(x^4 - 2x^2)$, $\sigma_t = 3$, and $H = 1/2$, focusing on the transition from the equilibrium state $(-1, 0)$ to $(1, 0)$. 

Rewriting the system into a first-order state-space form yields
\begin{equation*}
    \frac{d}{dt} \begin{bmatrix} X_t \\ Y_t \end{bmatrix} 
    = \begin{bmatrix} 
        Y_t \\ 
        -\gamma Y_t - V'(X_t)
    \end{bmatrix}.
\end{equation*}
The deterministic system possesses stable equilibria at $(\pm 1, 0)$, and noise injection induces transitions between these states. The corresponding OM functional for this system is given by
\begin{equation*}
    J(\psi) = \int_0^1 \frac{\bigl( \psi_t'' + \gamma \psi_t' + V'(\psi_t) \bigr)^2}{2\sigma^2} \, dt - \frac{\gamma}{2}.
\end{equation*} 
Exploiting the energy structure of the autonomous system and noting that the boundary terms $\left[ (\psi_t')^2 + 2V(\psi_t) \right]_0^1$ vanish for transitions between symmetric equilibria, the functional simplifies to:
\begin{align*}
    \tilde{J}(\psi) &= \int_0^1 \left[ \bigl( \psi_t'' + V'(\psi_t) \bigr)^2 + \gamma^2 (\psi_t')^2 \right] dt + \gamma \left[ (\psi_t')^2 + 2V(\psi_t) \right]_0^1 \\
    &= \int_0^1 \left[ \bigl( \psi_t'' + V'(\psi_t) \bigr)^2 + \gamma^2 (\psi_t')^2 \right] dt.
\end{align*}
Applying the calculus of variations, we derive the associated Euler--Lagrange equation characterizing the optimal path:
\begin{equation*}
    \psi_t^{(4)} + \bigl( 2V''(\psi_t) - \gamma^2 \bigr) \psi_t'' + V'''(\psi_t) \bigl( \psi_t' \bigr)^2 + V''(\psi_t) V'(\psi_t) = 0.
\end{equation*}
Boundary-value problem solutions were computed using the \texttt{bvp4c} algorithm. These theoretical trajectories align closely with empirical mean trajectories from stochastic simulations, validating our analytical framework (see Fig.~\ref{example2}).

Furthermore, we extend our numerical investigation to a non-autonomous regime by introducing time-dependent noise intensity, $\sigma_t = 2 + \sin(8\pi t)$, for fractional Hurst parameters $H \in \{0.3, 0.8\}$. In these cases, the interplay between fractional calculus and time-varying coefficients renders the explicit Euler--Lagrange equation analytically intractable. Consequently, the most probable paths are obtained by directly minimizing the discretized functional $J$ using an interior-point method. Figure~\ref{example3} illustrates the convergence of the average path toward the most probable path for $H=0.3$. For $H=0.8$, due to the numerical instability inherent in fractional differentiation, we restrict our focus to the empirical mean trajectory, as depicted in Figure~\ref{example4}.

These numerical experiments reveal a profound geometric insight: the most probable transition path is not only deformed by the memory effects governed by the Hurst parameter $H$, but also directly inherits morphological features—such as characteristic oscillations—from the time-modulated noise intensity $\sigma_t$.
\end{example}

    \begin{figure}[htbp]
    \centering
    % 宽度设置为 0.48\textwidth，完美适配单栏满宽或双栏中的单栏排版
    \includegraphics[width=0.48\textwidth]{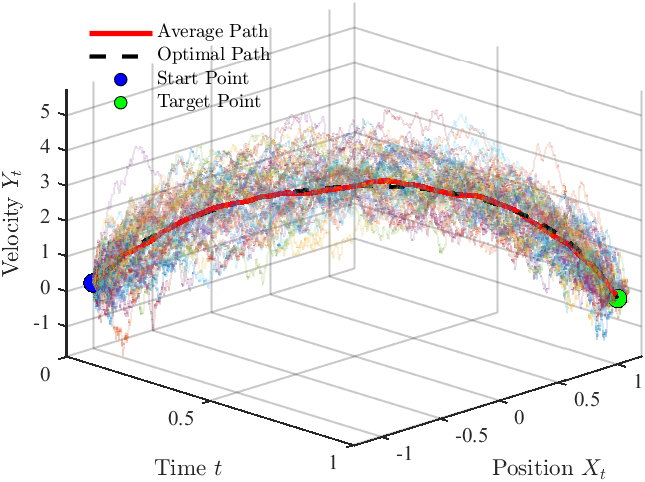}
    % 使用 \textit 保持数学变量与正文、图片内部符号的严格一致
    \caption{The average path and the optimal path for \eqref{equation 2}}
    \label{example2}
\end{figure}

\begin{figure}[htbp]
    \centering
    \includegraphics[width=0.48\textwidth]{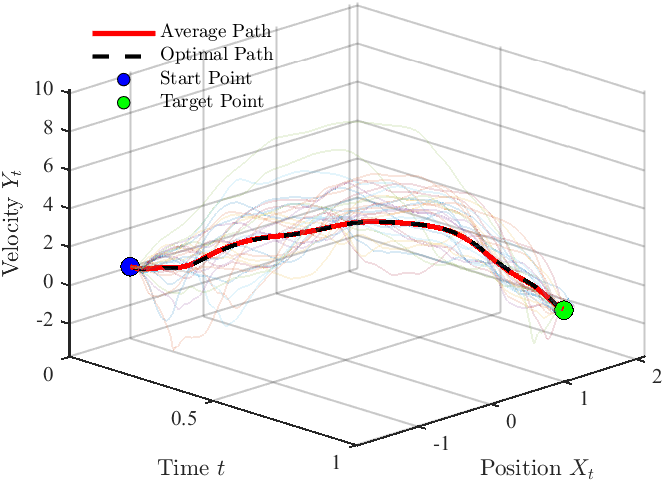}
    \caption{The average path and the optimal path for \eqref{equation 2}, with $H=0.3, \sigma_t = 2 +  \sin(8\pi t)$ and $\gamma=0.1$}
    \label{example3}
\end{figure}

\begin{figure}[htbp]
    \centering
    \includegraphics[width=0.48\textwidth]{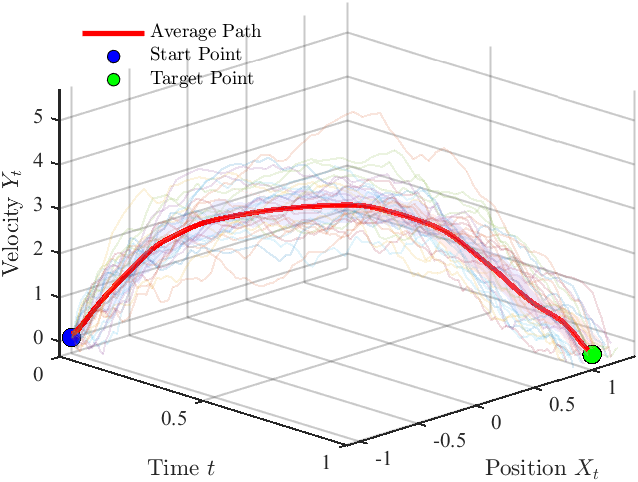}
    \caption{The average path  for \eqref{equation 2}, with $H=0.8, \sigma_t = 5 + 1.5\sin(4\pi t)$ and $\gamma=0.1$}
    \label{example4}
\end{figure}

\clearpage

\bibliographystyle{plain}
\bibliography{math.bib}

\end{document}